\theoremstyle{plain}
\newtheorem{theorem}{Theorem}[section]
\newtheorem{proposition}{Proposition}[section]
\newtheorem{lemma}{Lemma}[section]
\theoremstyle{definition}
\newtheorem{remark}{Remark}[section]
\newcommand{\R}{\mathbb{R}}
\newcommand{\N}{\mathbb{N}}
\newcommand{\E}{\mathbb{E}}
\newcommand{\Prob}{\mathbb{P}}
\newcommand{\calA}{\mathcal{A}}
\newcommand{\calB}{\mathcal{B}}
\newcommand{\calF}{\mathcal{F}}
\newcommand{\calG}{\mathcal{G}}
\newcommand{\calX}{\mathcal{X}}
\newcommand{\calY}{\mathcal{Y}}
\newcommand{\calP}{\mathcal{P}}
\newcommand{\calD}{\mathcal{D}}
\newcommand{\calM}{\mathcal{M}}
\newcommand{\abs}[1]{\left\lvert #1 \right\rvert}
\newcommand{\norm}[1]{\left\lVert #1 \right\rVert}
\DeclareMathOperator{\op}{op}
\begin{document}
\title[Convergence of empirical Gromov-Wasserstein distance]{Convergence of empirical
Gromov-Wasserstein distance}
\date{First version: August 2, 2025. This version: \today.}
\thanks{K. Kato is partially supported by NSF grant DMS-2413405. We would like to thank Ziv Goldfeld and Sloan Nietert for stimulating discussions.}

\author[K. Kato]{Kengo Kato}
\address[K. Kato]{
Department of Statistics and Data Science, Cornell University.
}
\email{kk976@cornell.edu}

\author[B. Wang]{Boyu Wang}
\address[B. Wang]{
Department of Statistics and Data Science, Cornell University.
}
\email{bw563@cornell.edu}


\begin{abstract}
We study rates of convergence for estimation of the Gromov-Wasserstein (GW) distance. For two marginals supported on compact subsets of $\R^{d_x}$ and $\R^{d_y}$, respectively, with $\min \{ d_x,d_y \} > 4$, prior work established the rate $n^{-\frac{2}{\min\{d_x,d_y\}}}$  in $L^1$ for the plug-in empirical estimator based on $n$ i.i.d. samples. We extend this fundamental result to marginals with unbounded supports, assuming only finite polynomial moments. Our proof techniques for the upper bounds can be adapted to obtain sample complexity results for penalized Wasserstein alignment that encompasses the GW distance and Wasserstein Procrustes. Furthermore, we establish matching minimax lower bounds (up to logarithmic factors) for estimating the GW distance.  Finally, we establish deviation inequalities for the error of empirical GW in cases where two marginals have compact supports, exponential tails, or finite polynomial moments. The deviation inequalities yield that the same rate $n^{-\frac{2}{\min\{d_x,d_y\}}}$ holds for empirical GW also with high probability. 
\end{abstract}
\keywords{Gromov-Wasserstein distance, minimax lower bound, optimal transport, sample complexity, unbounded support}

\maketitle

\section{Introduction}

\subsection{Overview}
The Gromov-Wasserstein (GW) distance \cite{memoli11gromov,sturm2012space} provides a powerful tool for comparing and aligning heterogeneous and structured data sets and has received increasing interest from various application domains. Examples of applications include shape and graph matching \cite{memoli2009spectral,xu2019gromov,xu2019scalable} and language alignment \cite{alvarez2018gromov}. Generally, the GW distance defines a metric on a space of Polish metric measure spaces modulo measure-preserving isometries \cite{sturm2012space}. For two Euclidean metric measure spaces $(\R^{d_x},\| \cdot \|,\mu)$ and $(\R^{d_y},\| \cdot \|,\nu)$ endowed with Borel probability measures $\mu$ and $\nu$, the $(p,q)$-GW distance $\mathsf{GW}_{p,q}(\mu,\nu)$ with $p,q \in [1,\infty)$ is defined by
\begin{equation}
\begin{split}
\mathsf{GW}_{p,q}^p(\mu,\nu) &:= \mathsf{GW}_{p,q}^p\Big( (\R^{d_x},\| \cdot \|,\mu),(\R^{d_y},\| \cdot \|,\nu) \Big)  \\
&:= \inf_{\pi \in \Pi(\mu,\nu)} \int_{\R^{d_x} \times \R^{d_y}}\int_{\R^{d_x} \times \R^{d_y}} \big | \|x-x'\|^q - \| y-y' \|^q \big |^p \, d\pi(x,y) d\pi(x',y'),
\end{split}
\label{eq: GW}
\end{equation}
where $\Pi(\mu,\nu)$ denotes the set of couplings for $\mu$ and $\nu$. Recall that any coupling $\pi \in \Pi(\mu,\nu)$ is a joint distribution on $\R^{d_x} \times \R^{d_y}$ with marginals $\mu,\nu$ on $\R^{d_x}, \R^{d_y}$, respectively.

Despite its widespread applications, the statistical analysis of GW remains challenging. In contrast to classical optimal transport (OT), for which a rich statistical theory exists, GW presents significant obstacles due to the bilinear and nonconvex nature of its objective function, as opposed to linear OT. Additionally, there is still a lack of a comprehensive duality theory for general GW that limits the development of detailed statistical theory.

The recent work \cite{zhang2024gromov} established the first sample complexity results for the GW distance with $(p,q)=(2,2)$ by leveraging a variational representation of $\mathsf{GW}_{2,2}^2$ that links the GW problem to standard OT. Using this representation and OT duality theory, they showed that for \textit{compactly supported} $\mu$ and $\nu$, the empirical estimator based on $n$ i.i.d. samples converges, in $L^1$, at the rate  $n^{-2/(d_x \wedge d_y)}$ with $d_x \wedge d_y = \min \{ d_x,d_y \}$ when $d_X \wedge d_y > 4$.\footnote{The empirical estimator is defined by plugging in the empirical distributions for $\mu$ and $\nu$.} Notably,  this rate adapts automatically to the smaller of the two dimensions, rather than being governed by the worst case dimension $d_x \vee d_y = \max \{ d_x,d_y \}$. 
This is achieved by adapting the lower complexity adaptation (LCA) principle studied in \cite{hundrieser2024empirical} in the OT case. They further derived matching lower bounds for the empirical estimator, but did not derive minimax lower bounds. As such, strictly speaking, their lower bound result does not rule out the possibility that estimators other than the plug-in type could uniformly outperform the empirical estimator.

The first goal of this work is to extend their fundamental sample complexity result to the unbounded support case. Our result establishes that, when $d_x \wedge d_y > 4$, the $n^{-2/(d_x \wedge d_y)}$ rate (without logarithmic factors) continues to hold for the empirical GW even when $\mu,\nu$ only possess finite polynomial moments. 
It is worth noting that, even in OT cost estimation, extending results from the compact support case to the unbounded setting is often highly nontrivial. This is because \textit{global} regularity estimates for dual potentials, which are often available for the compact support case, do not continue to hold for the unbounded setting, and establishing \textit{local} regularity estimates would require delicate tail conditions on the marginals; see \cite{colombo2021bounds,manole2024plugin} and the discussion in Section \ref{sec: preliminaries}. Our proof essentially builds on an adaptation of an idea in \cite{staudt2025convergence}, but with some nontrivial twists; see the discussion after Theorem \ref{thm: upper bounds} below for details. Furthermore, our proof technique can be adapted to obtain sample complexity results for penalized Wasserstein alignment \cite{pal2025wasserstein} that encompasses the GW distance and Wasserstein Procrustes. 

In addition, we establish two auxiliary results. First, when one of the marginals is heavy-tailed with less than 8-th moments, we show that the rate of convergence of the empirical GW distance can be arbitrarily slow. The result sheds new light on the tradeoff between heavy-tailedness of the distributions and the speed of convergence of the empirical distance. Second, in the semidiscrete setting, i.e., when one of the marginals, say $\nu$, is finitely  discrete, we show that the parametric convergence rate $n^{-1/2}$ holds whenever the other marginal $\mu$ has a finite 8-th moment.  The result complements a recent
limiting distributional result for semidiscrete GW in \cite{rioux2024limit}  with
compactly supported $\mu$ (cf. Theorem 7 there). 

Our second goal is to formally derive minimax lower bounds for estimating $\mathsf{GW}_{2,2}^2$ that match the upper bounds for the empirical estimator, possibly up to logarithmic factors. For $d_x \wedge d_y > 4$, our result establishes a minimax lower bound that matches $n^{-2/(d_x \wedge d_y)}$  up to a logarithmic factor, for the class of distributions supported in the unit ball (and hence any larger distribution class). This result indicates that no other estimators can significantly outperform the empirical estimator uniformly over the said class of distributions.
The proof builds on \cite{niles2022estimation} but requires some new ideas to deal with invariance of GW under isometries.

Our third goal is to establish deviation inequalities for the error of empirical GW, or more precisely, the discrepancy between the squared empirical and population GW distances. We consider the three scenarios where two marginals have (i) compact supports, (ii) exponential tails (more precisely, finite $\psi_\beta$-norms for some $\beta > 0$), and (iii) finite polynomial moments. For the first two cases, exponential deviation holds, while for the last case, only polynomial deviation holds. Our result shows that,  when $d_x \wedge d_y > 4$, the discrepancy between the squared empirical and population GW distances is at most a constant multiple of $n^{-2/(d_x \wedge d_y)}$ with high probability, complementing the upper bounds in expectation.  Our proof for the deviation inequalities for the unbounded support cases builds on a variant of McDiarmid's inequality due to \cite{combes2024extension} that only requires the bounded difference condition to hold on a high-probability event. The deviation inequalities for empirical GW are new, even for the compactly supported case.

In sum, this work establishes novel sample complexity upper bounds and deviation inequalities for empirical GW in possibly unbounded settings and derives minimax lower bounds for GW estimation. These results close several important gaps in the literature and contribute to a deeper understanding of the GW estimation problem. 

\subsection{Literature review}
The literature related to this paper is broad. We refer the reader to \cite{chewi2024statistical} as an excellent monograph on recent developments of statistical OT. 

Convergence and exact asymptotics of empirical OT costs have been extensively studied in the statistics and probability literature; see, e.g., \cite{ajtai1984optimal,talagrand1992matching,talagrand1994transportation,dobric1995asymptotics,del1999central,de2002almost,barthe2013combinatorial,dereich2013,boissard2014mean,fournier2015rate,weed2019sharp,lei2020convergence,chizat2020faster,manole2024sharp,staudt2025convergence}. Most of these references focus on establishing sharp rates for empirical distributions under $p$-Wasserstein distances $W_p$ with $p \in [1,\infty)$.\footnote{See Section \ref{sec: preliminaries} for the definition of $W_p$.} For instance, let $\mu$ be a Borel probability measure on $\R^d$ and $\hat{\mu}_n$ be the empirical distribution for $n$ i.i.d. samples from $\mu$; then, \cite{fournier2015rate} showed that when $\mu$ has a finite $q$-th moment with $q > p$,
\begin{equation}
\E\big[W_p^p(\hat{\mu}_n,\mu)\big] \lesssim
\begin{cases}
    n^{-1/2} + n^{-(q-p)/q} & \text{if \ $d < 2p$ and $q \ne 2p$}, \\
    n^{-1/2}\log (1+n) + n^{-(q-p)/q} & \text{if \ $d = 2p$ and $q \ne 2p$}, \\
    n^{-p/d} + n^{-(q-p)/q}& \text{if \ $d > 2p$ and $q \ne d/(d-p)$},
\end{cases}
\label{eq: founier}
\end{equation}
where $\lesssim$ denotes an inequality holding up to a numerical constant that is independent of $n$ but may depend on other parameters. As a canonical case, when $d > 4$ and $q > 2d/(d-2)$, (\ref{eq: founier}) implies $\E\big[W_2^2(\hat{\mu}_n,\mu)\big] \lesssim n^{-2/d}$.
These rates in (\ref{eq: founier}) are known to be sharp in various settings with a notable exception of the $d = 2p$ case; cf. the discussion after Theorem 1 in \cite{fournier2015rate}. 
The study of the empirical OT cost for heavy-tailed marginals is relatively scarce, to the best of the authors' knowledge. One exception is \cite{del1999central}, where the authors established limit theorems and moment convergence of $W_1(\hat{\mu}_n,\mu)$ in $d=1$ when $\mu$ is in the domain of attraction of an $\alpha$-stable law with $\alpha \in (1,2]$. 

Estimation of the OT cost $W_p^p$, rather than distribution estimation under $W_p$, has been explored by the recent works by \cite{chizat2020faster,manole2024sharp,staudt2025convergence}. For instance, let $\nu$ be another Borel probability measure on $\R^d$ and $\hat{\nu}_m$ be the empirical distribution of $m$ i.i.d. samples from $\mu$ that are independent of the samples from $\mu$; then, \cite{chizat2020faster} showed that, when $d > 4$ and $\mu,\nu$ are compactly supported,
\begin{equation}
\E\big[\big|W_2^2(\hat{\mu}_n,\hat{\nu}_m)-W_2^2(\mu,\nu)\big|\big] \lesssim (n \wedge m)^{-2/d}.
\label{eq: two sample rate}
\end{equation}
The same rate holds for estimating $W_2(\mu,\nu)$ as long as $W_2(\mu,\nu)$ is bounded away from zero, which is faster than the rate implied by (\ref{eq: founier}) combined with the triangle inequality. See \cite{manole2024sharp,staudt2025convergence} for extensions to general $p$ and marginals with unbounded supports. Furthermore, the rate in (\ref{eq: two sample rate}) agrees with a minimax lower bound up to a logarithmic factor for a class of distributions supported in a fixed ball \cite{manole2024sharp}. 
 
 Concentration or deviation inequalities for empirical OT costs, akin to our Theorem \ref{thm: deviation}, seem to have been less explored. One related result is Theorem 2 in \cite{chizat2020faster} that establishes $\Prob (|W_2^2(\hat{\mu}_n,\hat{\nu}_n)-\E[W_2^2(\hat{\mu}_n,\hat{\nu}_n)]| \ge t) \le 2e^{-nt^2}$, when $\mu,\nu$ are supported in a set of diameter $1$. Combining (\ref{eq: two sample rate}), the preceding result yields a deviation inequality for $|W_2^2(\hat{\mu}_n,\hat{\nu}_n)-W_2^2(\mu,\nu)|$. Beyond the compact support case, both \cite{manole2024sharp,staudt2025convergence} did not study concentration or deviation inequalities for errors of empirical OT costs when $\mu \ne \nu$.
Of note is that one can use the decomposition $|W_p(\hat{\mu}_n,\hat{\nu}_m) - W_p(\mu,\nu)| \le W_p(\hat{\mu}_n,\mu)+W_p(\hat{\nu}_m,\nu)$ and apply known concentration or deviation inequalities for $W_p(\hat{\mu}_n,\mu)$ and $W_p(\hat{\nu}_m,\nu)$, e.g., in \cite{fournier2015rate}, to obtain deviation inequalities for $|W_p(\hat{\mu}_n,\hat{\nu}_m) - W_p(\mu,\nu)|$, but the resulting inequalities are suboptimal because $|W_p(\hat{\mu}_n,\hat{\nu}_m) - W_p(\mu,\nu)|$ should scale faster than $\max \{ W_p(\hat{\mu}_n,\mu), W_p(\hat{\nu}_m,\nu)\}$ when $\mu \ne \nu$; cf. the discussion below (\ref{eq: two sample rate}).

In contrast to standard OT costs, statistical analysis of GW distances is still in its infancy. In addition to GW itself, \cite{zhang2024gromov} studied entropic regularization of GW with $(p,q)=(2,2)$, establishing parametric sample complexity results analogous to those in entropic OT estimation (cf. \cite{genevay2019sample,mena2019statistical}). \cite{groppe2024lower} studied the LCA principle for entropic GW, focusing on how the power of the regularization parameter depends on the intrinsic dimensionality. A recent preprint \cite{rioux2024limit} derived the first limiting distributional results for empirical GW in both discrete and semi-discrete settings. The present paper contributes to the (ever-growing) statistical OT literature by deepening the understanding of the fundamental statistical properties of GW, which remain underdeveloped despite significant interest from applied domains.

\subsection{Organization}
The rest of the paper is organized as follows. Section \ref{sec: preliminaries} collects brief overviews of OT and GW and a discussion on the prior sample complexity results for GW. Section \ref{sec: main} presents the main results on sample complexity in the unbounded setting, minimax lower bounds, and deviation inequalities for GW. All proofs are gathered in Section \ref{sec: proofs}.

\subsection{Notation}
For two numbers $a,b \in \R$, we use the notation $a \wedge b = \min \{ a,b \}$ and $a \vee b = \max\{a,b \}$.
Let $\| \cdot \|_{\op}$ and $\| \cdot \|_{\mathrm{F}}$ denote the operator and Frobenius norms for matrices, respectively, i.e., for any matrix $A = (a_{ij})_{\substack{1 \le i \le d_1 \\ 1 \le j \le d_2}}  \in \R^{d_1 \times d_2}$, 
\[
\| A \|_{\op} := \sup_{y \in \R^{d_2}, y \ne 0} \frac{\|Ay\|}{\|y\|} \quad \text{and} \quad \|A\|_{\mathrm{F}} := \sqrt{\sum_{\substack{1 \le i \le d_1 \\ 1 \le j \le d_2} }a_{ij}^2}.
\]
For any symmetric matrix $A$, let $\lambda_{\min}(A)$ denote its smallest eigenvalue. For any metric space $(M,d)$, we use $B_{M}(x,r)$ to denote the closed ball in $M$ with center $x$ and radius $r$. Let $\calP(M)$ denote the collection of all Borel probability measures on $M$. For any $p > 0$ and any fixed $x_0 \in M$, let $\calP_p(M) := \{ \mu \in \calP(M): \int d^p(x,x_0) \, d\mu(x) < \infty \}$. For any $\mu \in \calP(M)$ and $p \in [1,\infty)$, let $(L^p(\mu),\| \cdot \|_{L^p(\mu)})$ denote the $L^p$-space of Borel measurable real functions on $M$ with respect to (w.r.t.) $\mu$. For any $\mu \in \calP(M)$ and any Borel measurable mapping $f$ from $M$ into another metric space, let $f_{\#}\mu$ denote the pushforward of $\mu$ under $f$, i.e., $f_{\#}\mu = \mu \circ f^{-1}$. For real functions $f,g$ defined on spaces $\calX, \calY$, respectively, let $f \oplus g$ denote their tensor sum, i.e., $(f\oplus g)(x,y) = f(x)+g(y)$. For two probability measures $\mu,\nu$, let $\mu \otimes \nu$ denote their product measure.
Finally, the notation $\lesssim$ signifies an inequality that holds up to a numerical constant independent of $(n,m)$ but that may depend on other parameters. The dependence of the hidden constant on the parameters will be clarified from place to place. 

\section{Preliminaries}
\label{sec: preliminaries}
Throughout the paper, let $d_x, d_y \in \N$ be fixed. For notational convenience, let $\calX = \R^{d_x}$ and $\calY = \R^{d_y}$. Let $\mu \in \calP(\calX)$ and $\mu \in \calP(\calY)$ be given. Suppose that there are i.i.d. samples $X_1,\dots,X_n$ and $Y_1,\dots,Y_m$ from $\mu$ and $\nu$, respectively, that are independent of each other. The corresponding empirical distributions are defined by
\begin{equation}
\hat{\mu}_n := \frac{1}{n}\sum_{i=1}^n \delta_{X_i} \quad \text{and} \quad \hat{\nu}_m := \frac{1}{m}\sum_{j=1}^m \delta_{Y_j}.
\label{eq: empirical}
\end{equation}
These notations will be carried over to the next sections. Furthermore, we assume $n \wedge m \ge 2$ (this is to avoid $\log (n \wedge m)=0$).

In this section, we first review OT and GW and move on to discussing prior related results. 

\subsection{Optimal transport}
Let $c: \calX \times \calY \to \R$ be a continuous, not necessarily nonnegative, cost function. Assume that there exist nonnegative continuous functions $c_{\calX}: \calX \to \R_+$ and $c_{\calY}: \calY \to \R_+$ such that
\[
|c| \le c_{\calX}\oplus c_{\calY} \quad \text{on \ $\calX \times \calY$}. 
\]
Assume that $c_{\calX} \in L^1(\mu)$ and $c_{\calY} \in L^1(\nu)$.  The OT cost between $\mu$ and $\nu$ is defined by
\begin{equation}
T_c(\mu,\nu) := \inf_{\pi \in \Pi(\mu,\nu)} \int c \, d\pi.
\end{equation}
The preceding moment condition ensures that $T_c(\mu,\nu)$ is finite. Furthermore, the following strong duality holds:
\[
T_c (\mu,\nu) = \sup_{\substack{f \in L^1(\mu), g \in L^1(\nu) \\ f \oplus g \le c}} \int f \, d\mu + \int g \, d\nu,
\]
where the supremum on the right-hand side is attained. See Theorem 5.9 in \cite{villani2008optimal} or Theorem 6.1.5 in \cite{ambrosio2008gradient}. We call any functions $(f,g)$ achieving the supremum above \textit{dual potentials}. It is worth noting that dual potentials $(f,g)$ can be chosen to be \textit{$c$-concave} and one of the potentials can be replaced with the \textit{$c$-transform} of the other. Recall that, for a function $f: \calX \to [-\infty,\infty)$ that is not identically $-\infty$, its $c$-transform is defined by $f^c (y) := \inf_{x \in \calX}\{ c(x,y) - f(x) \}$ for $y \in \calY$, and $f$ is called $c$-concave if it agrees with the $c$-transform of some function on $\calY$. Under the current assumption, there exists a $c$-concave function $f \in L^1(\mu)$ with $f^c \in L^1(\nu)$ such that $(f,f^c)$ are dual potentials.

When $\calX = \calY$, the $p$-Wasserstein distance $W_p(\mu,\nu)$ with $p \in [1,\infty)$ corresponds to $T_c^{1/p}(\mu,\nu)$ with $c(x,y) = \| x-y \|^p$, i.e.,
\[
W_p(\mu,\nu) := \inf_{\pi \in \Pi(\mu,\nu)} \left(\int \|x-y\|^p \, d\pi(x,y)\right)^{1/p}. 
\]
The $W_p$ defines a metric on $\calP_p(\calX)$ and metrizes weak convergence plus convergence of $p$-th moments. We refer the reader to \cite{villani2008optimal,ambrosio2008gradient} as excellent references on OT and Wasserstein distances. 
\subsection{Gromov-Wasserstein distance}
For $p,q \in [1,\infty)$, the $(p,q)$-GW distance  is defined by (\ref{eq: GW}), 
where we assume $\mu \in \calP_{pq}(\calX)$ and $\nu \in \calP_{pq}(\calY)$ to ensure finiteness of $\mathsf{GW}_{p,q}(\mu,\nu)$. If one views $\mathsf{GW}_{p,q}$ as comparing two metric measure spaces $(\calX,\| \cdot \|,\mu)$ and $(\calY, \| \cdot \|, \nu)$,  then it is symmetric and satisfies the triangle inequality. Finally, $\mathsf{GW}_{p,q}(\mu,\nu) = 0$ if and only if there exists an isometry $f: \mathrm{spt} (\mu) \to \mathrm{spt}(\nu)$ such that $\nu = f_{\#}\mu$ ($\mathrm{spt}(\mu)$ denotes the support of $\mu$). We record a few properties of $\mathsf{GW}_{p,q}$ that will be used later.
Set 
\[
\mathfrak{m}_q (\mu) := \int \|x\|^q \, d\mu(x)
\]
for $q > 0$.

\begin{lemma}
\label{lem: GW}
Let $p,q\in[1,\infty)$ be arbitrary. The following holds.
\begin{enumerate}
    \item[(i)] If the diameter of the support of each of $\mu$ and $\nu$ is at most $L$ for some constant $L >0$, then
     \[
     \mathsf{GW}_{p,q}(\mu, \nu) \le qL^{q-1}\mathsf{GW}_{p,1}(\mu, \nu).
     \]
    \item[(ii)] Suppose $d_x = d_y = d$. For any $\mu,\nu\in\calP_{pq}(\R^d)$, we have
\[
    \mathsf{GW}_{p,q}(\mu,\nu) \leq q\, 2^{q+1-\frac{1}{p}+\frac{1}{pq}}  \big(\mathfrak{m}_{pq}(\mu)+\mathfrak{m}_{pq}(\nu) \big)^{\frac{q-1}{pq}} W_{pq}(\mu,\nu).
\]
Furthermore, for $p=q=2$, if $\mu$ and $\nu$ have covariance matrices $\Sigma_\mu$ and $\Sigma_\nu$ with  smallest eigenvalues $\lambda_{\mathrm{min}}(\Sigma_\mu)$ and $\lambda_{\mathrm{min}}(\Sigma_\nu)$, respectively, then
\[
\Big(  32\big(\lambda^2_{\mathrm{min}}(\Sigma_\mu)+\lambda^2_{\mathrm{min}}(\Sigma_\nu)\big)\Big)^{1/4}\inf_{U\in E(d)} W_2(\mu,U_{\#}\nu) \leq  \mathsf{GW}_{2,2}(\mu,\nu),
\]
where $E(d)$ denotes the isometry group on $\R^d$.
\end{enumerate}
\end{lemma}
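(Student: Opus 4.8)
\emph{Structure of the proof.} The three items call for different tools: \emph{(i)} and the first inequality in \emph{(ii)} are elementary, while the covariance lower bound in \emph{(ii)} rests on the quadratic structure of $\mathsf{GW}_{2,2}^2$.

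\emph{Part (i) and the first bound in (ii).} The workhorse is the elementary estimate $\bigl|a^q-b^q\bigr|\le q(a\vee b)^{q-1}|a-b|$ for $a,b\ge 0$ (mean value theorem). For \emph{(i)}: take any $\pi\in\Pi(\mu,\nu)$; on $\spt(\mu)\times\spt(\nu)$ one has $\|x-x'\|\vee\|y-y'\|\le L$, hence $\bigl|\|x-x'\|^q-\|y-y'\|^q\bigr|\le qL^{q-1}\bigl|\|x-x'\|-\|y-y'\|\bigr|$; raising to the $p$-th power, integrating against $\pi\otimes\pi$ (the same $\pi$ appears on both sides), taking the infimum over $\pi$ and then the $p$-th root gives the claim. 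For the first bound in \emph{(ii)}: feed a $W_{pq}$-optimal coupling $\pi$ into the GW infimum as a competitor, combine the estimate above with $a\vee b\le\|x\|+\|x'\|+\|y\|+\|y'\|$ and the reverse triangle inequality $\bigl|\|x-x'\|-\|y-y'\|\bigr|\le\|x-y\|+\|x'-y'\|$, and finish with H\"older's inequality (exponents $q$ and $q/(q-1)$) plus $(|s|+|t|)^r\le 2^{r-1}(|s|^r+|t|^r)$ to detach a moment factor $(\mathfrak m_{pq}(\mu)+\mathfrak m_{pq}(\nu))^{(q-1)/q}$ from a transport factor $W_{pq}^{p}(\mu,\nu)$; collecting the powers of $2$ and taking $p$-th roots yields the stated constant (at most) $q\,2^{q+1-1/p+1/(pq)}$.

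\emph{Covariance lower bound in (ii): reduction and expansion.} If $\mathsf{GW}_{2,2}(\mu,\nu)=\infty$ the inequality is trivial, and finiteness forces $\mu,\nu\in\calP_4(\R^d)$ (e.g.\ because $|\|x-x'\|^2-\|y-y'\|^2|^2\ge\tfrac12\|x-x'\|^4-\|y-y'\|^4$); so assume finiteness. Since $\mathsf{GW}_{2,2}$ is a function of pairwise distances it is translation invariant, and because $E(d)$ contains all translations, $\inf_{U\in E(d)}W_2^2(\mu,U_{\#}\nu)=\inf_{O\in O(d)}W_2^2(\bar\mu,O_{\#}\bar\nu)$ with $\bar\mu,\bar\nu$ the centerings and the covariances unchanged; so take $\mu,\nu$ centered. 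Fix $\pi\in\Pi(\mu,\nu)$, let $(X,Y),(X',Y')$ be independent $\pi$-draws, set $M:=\E_\pi[XY^{\top}]$ and let $O$ be the orthogonal polar factor of $M$, so $\langle O,M\rangle_{\mathrm F}=\norm{M}_{*}$ (nuclear norm) and $MO^{\top}\succeq0$. Using $(\mathrm{id}\times O)_{\#}\pi\in\Pi(\mu,O_{\#}\nu)$,
\[
\inf_{O'\in O(d)}W_2^2(\mu,O'_{\#}\nu)\ \le\ \E_\pi\bigl[\|X-OY\|^2\bigr]\ =\ \mathfrak m_2(\mu)+\mathfrak m_2(\nu)-2\norm{M}_{*}.
\]
Expanding $\|x-x'\|^2=\|x\|^2+\|x'\|^2-2\langle x,x'\rangle$ (and likewise in $y$) and using $\E_\pi[X]=\E_\pi[Y]=0$ to kill the cross terms, one obtains the identity
\[
\E_{\pi\otimes\pi}\bigl[(\|X-X'\|^2-\|Y-Y'\|^2)^2\bigr]=2\Var_\pi\bigl(\|X\|^2-\|Y\|^2\bigr)+4\bigl(\mathfrak m_2(\mu)-\mathfrak m_2(\nu)\bigr)^2+4\bigl(\norm{\Sigma_\mu}_{\mathrm F}^2+\norm{\Sigma_\nu}_{\mathrm F}^2-2\norm{M}_{\mathrm F}^2\bigr).
\]

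\emph{Covariance lower bound in (ii): conclusion and the hard step.} It now suffices to show, for \emph{every} $\pi$, that the right side of the last identity is $\ge\sqrt{32(\lambda_{\min}^2(\Sigma_\mu)+\lambda_{\min}^2(\Sigma_\nu))}\,\bigl(\mathfrak m_2(\mu)+\mathfrak m_2(\nu)-2\norm{M}_{*}\bigr)$: taking the infimum over $\pi$ turns the left side into $\mathsf{GW}_{2,2}^2(\mu,\nu)$, and the right side is $\ge\sqrt{32(\cdots)}\inf_{O'}W_2^2(\mu,O'_{\#}\nu)$ by the preceding display. This per-coupling inequality is the crux, and I would assemble it from: (1) the joint covariance of $(X,OY)$ is PSD with \emph{symmetric} PSD off-diagonal block $MO^{\top}$, which yields both $2MO^{\top}\preceq\Sigma_\mu+O\Sigma_\nu O^{\top}$ and, by an entrywise Cauchy--Schwarz bound on $M$, $\norm{M}_{\mathrm F}^2\le\langle\Sigma_\mu,O\Sigma_\nu O^{\top}\rangle_{\mathrm F}$, whence $\norm{\Sigma_\mu}_{\mathrm F}^2+\norm{\Sigma_\nu}_{\mathrm F}^2-2\norm{M}_{\mathrm F}^2\ge\norm{\Sigma_\mu-O\Sigma_\nu O^{\top}}_{\mathrm F}^2$; (2) the radial term $\Var_\pi(\|X\|^2-\|Y\|^2)$ accounts for the part of $\|X-OY\|^2$ invisible to covariance alignment, with $\lambda_{\min}$ appearing because the family of squared distances $\{\|x-x'\|^2:x'\}$ determines $x$ only up to the least-spread direction of $\mu$ (and symmetrically for $\nu$), which I would make quantitative through a generalized Rayleigh-quotient estimate after conditioning on one of the pairs $(X',Y')$; (3) an elementary AM--GM-type combination of (1)--(2) to produce the numerical factor $\sqrt{32(\lambda_{\min}^2(\Sigma_\mu)+\lambda_{\min}^2(\Sigma_\nu))}$. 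I expect step (2) --- extracting the correct $\lambda_{\min}$-weight from the radial variance --- to be the genuinely delicate point, with the exact constant $32$ hinging on how sharply the Cauchy--Schwarz/Rayleigh steps are carried out.
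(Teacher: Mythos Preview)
The paper does not actually prove this lemma: it attributes (i) to Sturm (noting only that it ``follows directly from the elementary inequality $|a^q-b^q|\le qL^{q-1}|a-b|$ for $a,b\in[0,L]$'') and attributes all of (ii) to Lemma~4.4 of \cite{zhang2024gromov}. So for (i) your argument is exactly the one-line proof the paper indicates, and for (ii) you are attempting strictly more than the paper does.

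For the first inequality in (ii) your route (feed a $W_{pq}$-optimal coupling, mean-value bound, reverse triangle inequality, H\"older) is the natural one and is essentially how the cited reference proceeds; just be aware that your sketch, as written, produces the exponent $2q-1-\tfrac{1}{p}+\tfrac{1}{pq}$ on the $2$ rather than the stated $q+1-\tfrac{1}{p}+\tfrac{1}{pq}$ (these agree only at $q=2$), so matching the exact constant needs a tighter bookkeeping of the power-mean inequalities than you indicate.

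For the covariance lower bound your expansion identity is correct and the reduction to a per-coupling inequality is the right strategy, but the sketch has two real gaps. First, your step~(1) claim $\|M\|_{\mathrm F}^2\le\langle\Sigma_\mu,O\Sigma_\nu O^\top\rangle_{\mathrm F}$ is \emph{not} what ``entrywise Cauchy--Schwarz'' gives: from positive semidefiniteness of the joint covariance one gets $M_{ij}^2\le(\Sigma_\mu)_{ii}(O\Sigma_\nu O^\top)_{jj}$ and hence only $\|M\|_{\mathrm F}^2\le\tr(\Sigma_\mu)\tr(\Sigma_\nu)$, which is too weak for your conclusion $\|\Sigma_\mu\|_{\mathrm F}^2+\|\Sigma_\nu\|_{\mathrm F}^2-2\|M\|_{\mathrm F}^2\ge\|\Sigma_\mu-O\Sigma_\nu O^\top\|_{\mathrm F}^2$. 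Second, your step~(2) --- extracting the $\lambda_{\min}$-weight from the radial variance $\Var_\pi(\|X\|^2-\|Y\|^2)$ --- is left entirely heuristic, and you yourself flag it as the crux; a ``generalized Rayleigh-quotient estimate after conditioning'' is not a proof. Since the paper simply cites \cite{zhang2024gromov} here, there is no in-paper benchmark to compare against, but the cited proof does not proceed by your steps (1)--(3); it exploits the quadratic/variational structure of $\mathsf{GW}_{2,2}^2$ (the $S_1+S_2$ decomposition used throughout the paper) rather than trying to lower-bound the radial variance term directly.
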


Part (i) is due to Lemma 9.5~(iii) in \cite{sturm2012space} and follows directly from the elementary inequality $|a^q-b^q| \le qL^{q-1}|a-b|$ for $a,b \in [0,L]$. Part (ii) is due to Lemma 4.4 in \cite{zhang2024gromov}.\footnote{Lemma 4.4 in \cite{zhang2024gromov} assumes that $\Sigma_\mu$ and $\Sigma_\nu$ are of full rank, but this assumption can be removed. }

In this paper, as in \cite{zhang2024gromov,rioux2024entropic,groppe2024lower}, we focus on the $(p,q)=(2,2)$ case.  For notational convenience, set
\[
D(\mu,\nu):=\mathsf{GW}_{2,2}^2(\mu,\nu).
\]
The following notation will be useful:
\[
\begin{split}
S_1(\mu,\nu) &:=\int \|x-x'\|^4 \, d\mu \otimes \mu(x,x') + \int \|y-y'\|^4 \, d\nu \otimes \nu(y,y') \\
&\qquad -4\int \|x\|^2 \|y\|^2 \, d\mu \otimes \nu(x,y), \\ 
S_2(\mu,\nu) &:= \inf_{\pi \in \Pi(\mu,\nu)} \bigg \{ \int -4 \|x\|^2\|y\|^2 \, d\pi(x,y)  - 8 \left \| \int xy^\top \, d\pi(x,y) \right \|_{\mathrm{F}}^2 \bigg \}.
\end{split}
\]
Expanding the squares, one can decompose $D(\mu,\nu)$ as
\[
D(\mu,\nu) = S_1(\bar{\mu},\bar{\nu}) + S_2(\bar{\mu},\bar{\nu}),
\]
where $\bar{\mu}$ and $\bar{\nu}$ are the centered versions of $\mu$ and $\nu$, respectively, i.e., $\bar{\mu}$ is the distribution of $X - \E[X]$ when $X \sim \mu$. 
The first term, $S_1$, only involves moments of the marginals and is not difficult to handle. For the analysis of the second term $S_2$, the following variational representation, due to \cite{zhang2024gromov}, is particularly useful, as it allows us to link the GW problem to standard OT. A variant of the variational representation also plays a key role in developing formal computational guarantees for entropic GW; see \cite{rioux2024entropic}.

\begin{lemma}[Variational representation; Corollary 4.1 in \cite{zhang2024gromov}]
\label{lem: variational}
For any $\mu \in \calP_4(\calX)$ and $\nu \in \calP_{4}(\calY)$, one has
\begin{equation}
S_2(\mu,\nu) = \inf_{A \in \R^{d_x \times d_y}} \big \{ 32\| A \|_{\mathrm{F}}^2 + T_{c_A}(\mu,\nu) \big \}, \label{eq: variational}
\end{equation}
where $c_A (x,y) := -4\|x\|^2\|y\|^2 - 32x^\top Ay$. Furthermore, the infimum on the right-hand side is achieved by some $A \in \R^{d_x \times d_y}$ with $\|A\|_{\op} \le \sqrt{\mathfrak{m}_2(\mu)\mathfrak{m}_2(\nu)}/2$.
\end{lemma}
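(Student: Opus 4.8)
The plan is to isolate the only non-affine ingredient in the definition of $S_2(\mu,\nu)$, namely the concave term $-8\|\int xy^\top\,d\pi\|_{\mathrm F}^2$, and to linearize it by the elementary identity
\[
-8\|M\|_{\mathrm F}^2 \;=\; \inf_{A \in \R^{d_x\times d_y}}\big\{\,32\|A\|_{\mathrm F}^2 - 32\langle A,M\rangle_{\mathrm F}\,\big\},\qquad M\in\R^{d_x\times d_y},
\]
obtained by completing the square, with the infimum attained precisely at $A = M/2$. (This is where the special structure of the $q=2$, i.e.\ squared-distance, cost is used: the bilinear functional $\pi\mapsto\int xy^\top\,d\pi$ enters only through its squared Frobenius norm.) I would first fix a coupling $\pi\in\Pi(\mu,\nu)$ and put $M_\pi := \int xy^\top\,d\pi$, which is well defined because $\int\|x\|\,\|y\|\,d\pi \le \sqrt{\mathfrak{m}_2(\mu)\mathfrak{m}_2(\nu)}<\infty$ by Cauchy--Schwarz and $\mu,\nu$ having finite fourth (hence second) moments. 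Since $\langle A,M_\pi\rangle_{\mathrm F} = \int x^\top Ay\,d\pi$ by Fubini (same integrability), the identity above gives, for each fixed $\pi$,
\[
\int(-4\|x\|^2\|y\|^2)\,d\pi - 8\|M_\pi\|_{\mathrm F}^2 \;=\; \inf_{A}\Big\{32\|A\|_{\mathrm F}^2 + \int c_A\,d\pi\Big\},
\]
where $\int c_A\,d\pi$ is finite since $|c_A|\le c_{\calX}\oplus c_{\calY}$ with $c_{\calX}(x) = 2\|x\|^4 + 16\|A\|_{\op}\|x\|^2\in L^1(\mu)$ and the analogous $c_{\calY}\in L^1(\nu)$; this bound also places $c_A$ within the OT framework of Section~\ref{sec: preliminaries}, so that $T_{c_A}(\mu,\nu)$ is well defined and finite.

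Taking the infimum over $\pi$ on both sides and swapping the two infima (valid for any function of $(\pi,A)$) yields
\[
S_2(\mu,\nu) \;=\; \inf_{\pi}\inf_{A}\Big\{32\|A\|_{\mathrm F}^2 + \int c_A\,d\pi\Big\} \;=\; \inf_{A}\Big\{32\|A\|_{\mathrm F}^2 + T_{c_A}(\mu,\nu)\Big\},
\]
which is exactly (\ref{eq: variational}). For the operator-norm bound, I would note that the inner infimum over $A$ above, for fixed $\pi$, is attained at $A_\pi := M_\pi/2$, and that
\[
\|M_\pi\|_{\op} = \sup_{\|u\|=\|v\|=1}\int (u^\top x)(v^\top y)\,d\pi \le \sup_{\|u\|=\|v\|=1}\sqrt{\int(u^\top x)^2\,d\mu}\,\sqrt{\int(v^\top y)^2\,d\nu} \le \sqrt{\mathfrak{m}_2(\mu)\mathfrak{m}_2(\nu)}
\]
by Cauchy--Schwarz on $L^2(\pi)$ and the marginal constraints, so $A_\pi$ belongs to the compact set $\calB := \{A : \|A\|_{\op}\le \tfrac12\sqrt{\mathfrak{m}_2(\mu)\mathfrak{m}_2(\nu)}\}$. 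Since for each $\pi$ the unrestricted inner infimum is already attained inside $\calB$, repeating the argument with the infimum over $A$ restricted to $\calB$ gives $\inf_{A\in\calB}\{32\|A\|_{\mathrm F}^2 + T_{c_A}(\mu,\nu)\} = S_2(\mu,\nu)$, and the unrestricted infimum equals the same value.

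Finally I would argue that this common infimum is attained on $\calB$. The map $A\mapsto 32\|A\|_{\mathrm F}^2$ is continuous, and $A\mapsto T_{c_A}(\mu,\nu) = \inf_\pi\{\int(-4\|x\|^2\|y\|^2)\,d\pi - 32\langle A,M_\pi\rangle_{\mathrm F}\}$ is an infimum of affine functions of $A$, hence concave, and finite on all of $\R^{d_x\times d_y}$ by the bound $|c_A|\le c_{\calX}\oplus c_{\calY}$; a finite concave function on a Euclidean space is continuous. Thus the objective in (\ref{eq: variational}) is continuous on the compact set $\calB$ and attains its minimum there, at some $A$ with $\|A\|_{\op}\le\sqrt{\mathfrak{m}_2(\mu)\mathfrak{m}_2(\nu)}/2$, as claimed. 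I expect the only real care to lie in the bookkeeping: checking the integrability conditions uniformly enough to justify Fubini and to invoke the OT setup for $c_A$, and verifying the continuity of $A\mapsto T_{c_A}(\mu,\nu)$ that underlies the attainment step; the algebraic core, the completion-of-squares identity, is immediate.
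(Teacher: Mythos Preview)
Your proof is correct and follows essentially the same route as the paper: linearize $-8\|M_\pi\|_{\mathrm F}^2$ via the completion-of-squares identity, swap the two infima, bound $\|M_\pi\|_{\op}$ by Cauchy--Schwarz to restrict to the compact set $\calB$, and conclude attainment by continuity of $A\mapsto 32\|A\|_{\mathrm F}^2 + T_{c_A}(\mu,\nu)$ on $\calB$. You supply a bit more detail than the paper does---the integrability check for Fubini and the concavity argument for continuity of $A\mapsto T_{c_A}(\mu,\nu)$---but the structure and ideas are identical.
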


The proof is simple and based on the observation that 
\[
-8\left \| \int xy^\top \, d\pi(x,y) \right\|_{\mathrm{F}}^2 \le 32\|A\|_{\mathrm{F}}^2- 32 \left\langle A, \int xy^\top \, d\pi(x,y) \right \rangle_{\mathrm{F}}
\]
with equality holding if and only if $A = \frac{1}{2}\int xy^\top \, d\pi(x,y)$, where $\langle \cdot,\cdot \rangle_{\mathrm{F}}$ denotes the Frobenius inner product and $\langle A, \int xy^\top \, d\pi(x,y) \rangle_{\mathrm{F}} = \int x^\top A y\, d\pi(x,y)$. Interchanging $\inf_{\pi}$ and $\inf_{A}$ gives the expression (\ref{eq: variational}). Finally, since $\| \int xy^{\top} \, d\pi(x,y) \|_{\op} \le \sqrt{\mathfrak{m}_2(\mu)\mathfrak{m}_2(\nu)}$,  $\inf_{A}$ in (\ref{eq: variational}) can be reduced to the infimum over $A$ with $\|A\|_{\op} \le \sqrt{\mathfrak{m}_2(\mu)\mathfrak{m}_2(\nu)}/2$. As the mapping $A \mapsto 32\|A\|_{\mathrm{F}}^2 + T_{c_A}(\mu,\nu)$ is continuous, the final claim follows.

\subsection{Prior results for upper bounds and challenges in unbounded settings}
A fundamental statistical question is estimation of $D(\mu,\nu)$ from samples. A natural estimator is the empirical estimator $D(\hat{\mu}_n,\hat{\nu}_m)$. Theorem 4.2 in \cite{zhang2024gromov} establishes the following  sample complexity bound when $\mu$ and $\nu$ are supported in $B_{\calX}(0,r)$ and $B_{\calY}(0,r)$, respectively, for some $r \ge 1$:
\begin{equation}
\E \left[ \big| D(\hat{\mu}_n,\hat{\nu}_m) - D(\mu,\nu) \big |\right ]  \lesssim r^4 \varphi_{n,m},
\label{eq: zhang}
\end{equation}
where 
\begin{equation}
\varphi_{n,m} := (n \wedge m)^{-\frac{2}{(d_x \wedge d_y) \vee 4}} \big(\log (n \wedge m )\big)^{\mathbbm{1}_{\{ d_x \wedge d_y =4 \}}}.
\label{eq: rate}
\end{equation}
The hidden constant in (\ref{eq: zhang}) depends only on $d_x$ and $d_y$. Precisely speaking, \cite{zhang2024gromov} only considered the $n=m$ case but the $n \ne m$ case follows similarly with a minor modification.  

The proof of Theorem 4.2 in \cite{zhang2024gromov} leverages the variational representation from Lemma \ref{lem: variational} and OT duality theory. Given the variational representation, the approach is similar to the one used in \cite{chizat2020faster}. In the GW case, exploiting the variational representation, \cite{zhang2024gromov} reduces the problem to bounding 
\begin{equation}
\sup_{A}\big|T_{c_A}(\hat{\mu}_n,\hat{\nu}_m)-T_{c_A}(\mu,\nu)\big|,
\label{eq: dual approach}
\end{equation}
where $\sup_{A}$ is taken over a compact subset of $\R^{d_x \times d_y}$. 
Suppose that $4<d_x \le d_y$, so that $d_x \wedge d_y = d_x$. Using OT duality, \cite{zhang2024gromov} further reduces the problem to finding upper bounds on
\begin{equation}
\sup_{f \in \calF}\left | \int f\, d(\hat{\mu}_n-\mu) \right | \quad \text{and} \quad \sup_{g \in \calG}\left | \int g\, d(\hat{\nu}_m-\nu) \right |.
\label{eq: ep}
\end{equation}
where $\calF$ and $\calG$ are function classes chosen to contain dual potentials for $(\hat{\mu}_n,\hat{\nu}_m)$ and $(\mu,\nu)$ w.r.t. cost $c_A$ with varying $A$. As it turns out, $c_A$-concavity implies concavity in the usual sense, so that dual potentials can be chosen to be concave. Furthermore,  when the supports of $\mu$ and $\nu$ are contained in $B_{\calX}(0,r)$ and $B_{\calY}(0,r)$, respectively, one can choose $\calF$ to be consisting of concave, uniformly bounded, and uniformly Lipschitz functions, where the uniform upper bounds on the functions themselves and Lipschitz constants scale as $r^4$.
Now, it is not difficult to see that a version of Dudley's entropy integral bound yields that the expectation of the first term in (\ref{eq: ep}) scales as $r^4n^{-2/d_x}$. A suitable adaptation of the LCA principle (i.e., Lemma 2.1 in \cite{hundrieser2024empirical}) implies that the complexity of $\calG$ is essentially no greater than that of $\calF$, so that the expectation of the second term in (\ref{eq: ep}) also scales as $r^4m^{-2/d_x}$ (rather than $r^4m^{-2/d_y}$). 

One approach to extending (\ref{eq: zhang}) to the unbounded support case is to mimic the approach of \cite{manole2024sharp}, which establishes sharp rates for OT cost estimation in unbounded settings, and to derive quantitative local regularity estimates of dual potentials for a collection of costs $c_A$ with varying $A$. However, this approach would require imposing delicate tail conditions on $\mu,\nu$. Indeed, in OT cost estimation, \cite{manole2024sharp} assume that $\mu,\nu$ have sub-Weibull tails (which is stronger than $\mu,\nu$ having finite moments of all orders) and further satisfy anticoncentration properties. Furthermore, extending the LCA principle from \cite{hundrieser2024empirical} to the unbounded support case appears to be highly nontrivial.

\section{Main results}
\label{sec: main}
\subsection{Upper bounds for marginals with unbounded supports}

We now present our first main result that provides sample complexity upper bounds for GW estimation only under finite moment conditions. Recall the notation $\varphi_{n,m}$ from (\ref{eq: rate}).

\begin{theorem}[Upper bounds under finite moment conditions]
\label{thm: upper bounds}
For given $q \in (2,\infty)$ and $M \ge 1$, we have
\begin{multline}
\sup_{\substack{(\mu,\nu) \in \calP_{4q}(\calX) \times \calP_{4q}(\calY) \\ \mathfrak{m}_{4q}(\mu) \vee \mathfrak{m}_{4q}(\nu) \le M} } \E \left[ \big| D(\hat{\mu}_n,\hat{\nu}_m) - D(\mu,\nu) \big |\right ] \lesssim \varphi_{n,m} + (n \wedge m)^{-\frac{1}{2}} \sqrt{\log (n \wedge m)}  \\
+ (n \wedge m)^{\frac{1-q}{q}} \Big \{ (n \wedge m)^{\frac{2(d_x\vee d_y)}{q}} \wedge  (n \wedge m)^{\frac{d_xd_y}{2q}} \Big \} \log (n \wedge m),
\label{eq: upper}
\end{multline}
where the hidden constant depends only on $d_x,d_y,q$ and $M$. 
\end{theorem}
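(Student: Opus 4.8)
The plan is to follow the truncation strategy suggested by the remark after the theorem statement (adapting the idea of \cite{staudt2025convergence}), combining the decomposition $D(\mu,\nu) = S_1(\bar\mu,\bar\nu) + S_2(\bar\mu,\bar\nu)$ with a reduction to the compactly supported case via truncation at a radius $R = R_{n,m}$ to be optimized at the end. First I would dispose of $S_1$: since $S_1$ is an explicit polynomial in the moments $\mathfrak m_2,\mathfrak m_4$ of the (centered) marginals, $|S_1(\bar{\hat\mu}_n,\bar{\hat\nu}_m) - S_1(\bar\mu,\bar\nu)|$ reduces to controlling empirical fluctuations of fourth-order moments; under a finite $4q$-th moment with $q>2$ this is a sum of i.i.d. terms with $2q/4 > 1$ finite moments, so Marcinkiewicz--Zygmund (or a Rosenthal-type bound) gives a contribution of order $(n\wedge m)^{-1/2}$ plus a polynomial-deviation term $(n\wedge m)^{(1-q)/q}$-ish, consistent with the second and third terms on the right of \eqref{eq: upper}. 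The centering (replacing $\mu$ by $\bar\mu$ and $\hat\mu_n$ by its empirical centering) is harmless: it only shifts the cost functions $c_A$ by linear terms that vanish against $\hat\mu_n - \mu$ up to another $(n\wedge m)^{-1/2}$-type error, by the same moment bookkeeping.

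For the main term $S_2$, I would use the variational representation (Lemma \ref{lem: variational}): $S_2(\mu,\nu) = \inf_{\|A\|_{\op}\le \sqrt{\mathfrak m_2(\mu)\mathfrak m_2(\nu)}/2}\{32\|A\|_{\mathrm F}^2 + T_{c_A}(\mu,\nu)\}$, so that $|S_2(\hat\mu_n,\hat\nu_m) - S_2(\mu,\nu)| \le \sup_{A\in\mathcal A}|T_{c_A}(\hat\mu_n,\hat\nu_m) - T_{c_A}(\mu,\nu)|$ over a compact box $\mathcal A$ (whose radius is controlled by $M$, up to a vanishing empirical error in $\mathfrak m_2$). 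Now fix the truncation: let $\mu^R$ denote $\mu$ conditioned on $B_{\calX}(0,R)$ (and likewise for the empirical measure, the \emph{same} i.i.d. sample restricted to the ball), and similarly $\nu^R$. The key OT-stability estimate I would prove is that for the cost $c_A(x,y) = -4\|x\|^2\|y\|^2 - 32 x^\top A y$, which grows polynomially of degree $4$,
\[
\bigl| T_{c_A}(\mu,\nu) - T_{c_A}(\mu^R,\nu^R) \bigr| \lesssim_{M,A} \mu(B_{\calX}(0,R)^c) + \nu(B_{\calY}(0,R)^c) + (\text{tail integrals of }\|x\|^4,\|y\|^4),
\]
which, under the $4q$-th moment bound, is of order $R^{4-4q}$ (times a constant depending on $M$); the same holds for the empirical quantities, and one additionally needs $\Prob(\text{some }X_i \notin B_{\calX}(0,R)) \lesssim nR^{-4q}$ so that with high probability the truncated empirical measures are just $\hat\mu_n,\hat\nu_m$ renormalized by $1 + O(nR^{-4q})$. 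On the ball of radius $R$, I invoke the compactly supported bound \eqref{eq: zhang} of \cite{zhang2024gromov} (applied to $D$, equivalently to $\sup_A |T_{c_A}|$ as in \eqref{eq: dual approach}--\eqref{eq: ep}), which gives $R^4 \varphi_{n,m}$ — but crucially I would track that, on the ball of radius $R$, the dual potentials are concave, bounded and Lipschitz with constants scaling like $R^4$, and the LCA-based entropy bound gives precisely $R^4 \varphi_{n,m}$ for the empirical process term. Then the total bound is of the shape $R^4\varphi_{n,m} + R^{4-4q}\cdot(\text{const}) + nR^{-4q}\cdot R^4$; optimizing $R$ over these competing terms yields the stated $(n\wedge m)^{(1-q)/q}$-type tail term, and the $\{(n\wedge m)^{2(d_x\vee d_y)/q}\wedge (n\wedge m)^{d_xd_y/2q}\}$ factor should emerge from two alternative ways of bounding the empirical process on the truncated ball (a Lipschitz/covering bound in the ambient dimension $d_x\vee d_y$ versus a bound exploiting the bilinear structure of $c_A$, whose "effective dimension" is $d_x d_y$ through the matrix variable $A$), taking whichever is smaller.

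The hard part will be making the truncation argument genuinely uniform in $A$ while keeping the constants depending only on $(d_x,d_y,q,M)$: the cost $c_A$ is unbounded and nonnegative/nonpositive in an awkward way, so the standard "glue an optimal plan off the ball" argument for OT stability under truncation needs the polynomial growth of $c_A$ controlled by $c_{\calX}\oplus c_{\calY}$ with $c_{\calX}(x)\asymp \|x\|^4$, and one must check that the renormalization factor $(1+O(nR^{-4q}))$ interacts acceptably with a degree-$4$ cost — i.e., that $|T_{c_A}(\hat\mu_n,\hat\nu_m) - T_{c_A}(\hat\mu_n^R,\hat\nu_m^R)|$ is small \emph{in expectation}, not just on the good event, which forces a separate crude bound $|T_{c_A}| \lesssim_M 1$ on the complementary event of probability $\lesssim nR^{-4q}$. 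A secondary obstacle is verifying that the LCA principle of \cite{hundrieser2024empirical}, as used in \cite{zhang2024gromov}, indeed yields the dimension-adaptive rate $\varphi_{n,m}$ (with the $d_x\wedge d_y$ in the exponent) after truncation with the explicit $R^4$ scaling of the potentials — this is really where I would need to redo the entropy computation carefully rather than cite a black box. Everything else ($S_1$, the centering, the moment fluctuations) is routine i.i.d. concentration under $4q$ moments.
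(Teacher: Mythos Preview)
Your proposal has the right high-level ingredients (the $S_1/S_2$ decomposition, the variational representation, and an appeal to the compact-support bound), but the core mechanism is off in two related places, and as written the argument would not produce the stated rate.

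First, bounding $|S_2(\hat\mu_n,\hat\nu_m) - S_2(\mu,\nu)|$ symmetrically by $\sup_{A\in\calA}|T_{c_A}(\hat\mu_n,\hat\nu_m) - T_{c_A}(\mu,\nu)|$ and then truncating at radius $R$ leads, by your own bookkeeping, to a bound of the form $R^4\varphi_{n,m} + O(R^{4-4q})$; optimizing in $R$ yields at best $\varphi_{n,m}^{(q-1)/q}$, which is strictly worse than $\varphi_{n,m}$ for every finite $q$ and so never recovers the leading term of \eqref{eq: upper}. The paper avoids this by treating the two directions \emph{asymmetrically}. For the upper direction it uses $S_2(\hat\mu_n,\hat\nu_m) - S_2(\mu,\nu) \le T_{c_{A^\star}}(\hat\mu_n,\hat\nu_m) - T_{c_{A^\star}}(\mu,\nu)$ for the \emph{single} population-optimal $A^\star$, and then applies (a version of) Theorem~1.1 in \cite{staudt2025convergence} at that fixed cost. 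That result is more subtle than ``truncate at $R$ and optimize'': it exploits the fact that the compact rate scales \emph{linearly} in the cost level $r$ (not like $r^4$ in the Euclidean radius) and sums over shells, which is what delivers the full $\varphi_{n,m}$ without loss. Your uniform-in-$A$ truncation cannot recover this.

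Second, your explanation of the factor $\{(n\wedge m)^{2(d_x\vee d_y)/q} \wedge (n\wedge m)^{d_xd_y/(2q)}\}$ is incorrect. In the paper this term does not come from two alternative entropy bounds on a truncated ball; it comes from the \emph{lower} direction of the $S_2$ difference. On a high-probability event one has $S_2(\hat\mu_n,\hat\nu_m) - S_2(\mu,\nu) \ge \inf_{A\in\calA}\{T_{c_A}(\hat\mu_n,\hat\nu_m) - T_{c_A}(\mu,\nu)\}$, and the paper discretizes $\calA\subset\R^{d_x\times d_y}$ by an $\epsilon$-net $\calN_\epsilon$ with $\epsilon = (n\wedge m)^{-2/((d_x\wedge d_y)\vee 4)}$, controlling the discretization error on the \emph{primal} side via the Lipschitz dependence $A\mapsto c_A$. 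For each $A\in\calN_\epsilon$, OT duality with the \emph{deterministic population} potentials $(f_A,g_A)$ gives $T_{c_A}(\hat\mu_n,\hat\nu_m) - T_{c_A}(\mu,\nu) \ge \int f_A\,d(\hat\mu_n-\mu) + \int g_A\,d(\hat\nu_m-\nu)$; these potentials have uniform $L^q$ bounds, so a finite-maximum inequality over $\calN_\epsilon$ produces both the $(n\wedge m)^{-1/2}\sqrt{\log(n\wedge m)}$ term and the third term, with $|\calN_\epsilon|\lesssim \epsilon^{-d_xd_y} \lesssim (n\wedge m)^{2(d_x\vee d_y)}\wedge (n\wedge m)^{d_xd_y/2}$ the precise source of the two exponents. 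This asymmetric upper/lower split and the discretization of $A$ (rather than of space) are the missing ideas.
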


The theorem implies that for the $d_x \wedge d_y \ge 4$ case, when
\begin{equation}
q > \frac{d_x \wedge d_y+2d_xd_y}{d_x \wedge d_y-2},
\label{eq: minimal}
\end{equation}
the empirical estimator $D(\hat{\mu}_n,\hat{\nu}_m)$ achieves the rate $\varphi_{n,m}$ that was known to hold only for compactly supported marginals. The minimal moment condition (\ref{eq: minimal}) scales linearly in $d_x \vee d_y$.

For the $d_x \wedge d_y < 4$ case, our bound reduces to $(n \wedge m)^{-1/2}\sqrt{\log (n \wedge m)}$ when $q$ is 
\[
q > d_xd_y+2.
\]
The rate involves the extra logarithmic factor $\sqrt{\log (n \wedge m)}$ compared with the compact support case. The extra factor is likely to be an artifact of our proof technique and caused by discretizing the domain for $A$ appearing in the variational representation from Lemma \ref{lem: variational}. At this moment, we are unsure whether $\sqrt{\log (n \wedge m)}$ can be removed from the bound.

Our proof is partially inspired by the recent work by \cite{staudt2025convergence}, but extending directly their Theorem 1.1 to deal with (\ref{eq: dual approach}) seems not straightforward. The proof first finds separate upper and lower bounds for $S_2(\hat{\mu}_n,\hat{\nu}_m)-S_2(\mu,\nu)$ as 
\[
\inf_{A}\big \{ T_{c_A}(\hat{\mu}_n,\hat{\nu}_m) - T_{c_A}(\mu,\nu) \} \le S_2(\hat{\mu}_n,\hat{\nu}_m)-S_2(\mu,\nu) \le T_{c_{A^\star}}(\hat{\mu}_n,\hat{\nu}_m) - T_{c_{A^\star}}(\mu,\nu),
\]
where $A^\star$ is any matrix that achieves the infimum in the variational representation (\ref{eq: variational}) and $\inf_A$ on the left-hand side can be reduced to the infimum over a compact subset of $\R^{d_x \times d_y}$ up to a sufficiently small error. 
Since $A^\star$ is a single matrix, the upper bound can be dealt with by applying Theorem 1.1 in \cite{staudt2025convergence}, or its suitable modification to accommodate extra logarithmic factors (see Theorem \ref{thm: simplified} below). For the lower bound, OT duality tells us that 
\[
T_{c_A}(\hat{\mu}_n,\hat{\nu}_m) - T_{c_A}(\mu,\nu) \ge \int f_A \, d(\hat{\mu}_n - \mu) + \int g_A \, d(\hat{\nu}_m-\nu),
\]
where $(f_A,g_A)$ are dual potentials for $(\mu,\nu)$ w.r.t. cost $c_A$ for each $A$. Since regularity of the mappings $A \mapsto f_A$ and $A \mapsto g_A$ is unclear in the unbounded setting, we discretize the set of $A$ and apply the maximal inequality for a finite function class from Lemma 8 in \cite{chernozhukov2015comparison}. Controlling the discretization error on the \textit{primal level} gives the result.

The preceding theorem requires both marginals to have finite $(4q)$-th moments with $q \in (2,\infty)$. While finite $8$-th moments are (almost) necessary to ensure that, e.g., $\E[|S_1(\hat{\mu}_n,\hat{\nu}_m)-S_1(\mu,\nu)|] \lesssim (n \wedge m)^{-1/2}$, the moment condition in Theorem \ref{thm: upper bounds} is by no means the most general. As a particular instance, we shall consider the case where one of the marginals, say $\nu$,  is compactly supported. For simplicity, we assume $\nu \in \calP (B_\calY(0,1))$ in the next proposition. 
Let $\bar{\varphi}_{n,m,q}$ denote the right hand side on (\ref{eq: upper}).
\begin{proposition}[Upper bounds when one marginal has compact support]
\label{prop: semibounded}
For given $q \in (2,\infty)$ and $M \ge 1$, we have
\[
\sup_{\substack{(\mu,\nu) \in \calP_{2q}(\calX) \times  \calP (B_\calY(0,1)) \\ \mathfrak{m}_{2q}(\mu) \le M} } \E \left[ \big| D(\hat{\mu}_n,\hat{\nu}_m) - D(\mu,\nu) \big |\right ] \lesssim \bar{\varphi}_{n,m,q} + n^{\frac{2-q}{q}},
\]
where the hidden constant depends only on $d_x,d_y,q$ and $M$. 
\end{proposition}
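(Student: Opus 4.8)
The plan is to revisit the proof of Theorem \ref{thm: upper bounds} and exploit the fact that, when $\nu$ is supported in $B_\calY(0,1)$, the empirical distribution $\hat\nu_m$ is automatically supported there too, so the worst-case dimensional rate $m^{-2/d_y}$ and, more importantly, the contribution of $\nu$ to the moment-tail terms, become trivial. Concretely, I would keep the decomposition $D(\mu,\nu) = S_1(\bar\mu,\bar\nu) + S_2(\bar\mu,\bar\nu)$ and the sandwiching
\[
\inf_A \big\{ T_{c_A}(\hat\mu_n,\hat\nu_m) - T_{c_A}(\mu,\nu)\big\} \le S_2(\hat\mu_n,\hat\nu_m) - S_2(\mu,\nu) \le T_{c_{A^\star}}(\hat\mu_n,\hat\nu_m) - T_{c_{A^\star}}(\mu,\nu),
\]
where now, by Lemma \ref{lem: variational}, the relevant matrices $A$ range over a compact set whose radius is controlled by $\sqrt{\mathfrak m_2(\mu)\,\mathfrak m_2(\nu)} \le \sqrt{M}$ (using $\mathfrak m_2(\nu)\le 1$ and $\mathfrak m_2(\mu)\le \mathfrak m_{2q}(\mu)^{1/q}\le M$ up to constants). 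The key point is that in the cost $c_A(x,y) = -4\|x\|^2\|y\|^2 - 32 x^\top A y$, both $\|y\|$ and the linear factor multiplying $x$ are now bounded, so the $x$-marginal potential $f_A$ behaves like a function with polynomial growth of degree $2$ in $\|x\|$ (rather than degree $4$), which is why one only needs $\mathfrak m_{2q}(\mu)$ rather than $\mathfrak m_{4q}(\mu)$.

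The first step is the $S_1$ term: $S_1$ is an explicit polynomial in the moments $\mathfrak m_2,\mathfrak m_4$ of $\mu$ and $\nu$ and the cross moment $\int \|x\|^2\|y\|^2\,d\mu\otimes\nu$, each of which is a $U$- or $V$-statistic of degree at most $2$ whose kernel lies in $L^q$ (using $q>2$, so the kernels have more than a second moment); hence $\E[|S_1(\hat\mu_n,\hat\nu_m)-S_1(\mu,\nu)|] \lesssim (n\wedge m)^{-1/2} + n^{(2-q)/q}$ by standard moment bounds for $V$-statistics together with truncation at level $n^{1/q}$ — this is where the residual $n^{(2-q)/q}$ in the statement comes from. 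The second step is the upper bound on $S_2$: since $A^\star$ is a single matrix, I apply Theorem \ref{thm: simplified} (the modified version of Theorem 1.1 of \cite{staudt2025convergence}) to the cost $c_{A^\star}$; because $c_{A^\star}(x,y) \le C(\|x\|^2+1)$ and the dual potential $f_{A^\star}$ is $c_{A^\star}$-concave hence has growth at most $O(\|x\|^2)$, the relevant moment requirement on $\mu$ is $\mathfrak m_{2q}(\mu)<\infty$, yielding the bound $\bar\varphi_{n,m,q}$ (with the $4q$ in its derivation everywhere replaced by $2q$, which one checks does not change the exponents in the definition of $\bar\varphi_{n,m,q}$ beyond the already-present form). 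The third step is the lower bound on $S_2$: following the proof of Theorem \ref{thm: upper bounds}, I discretize the compact set of admissible $A$ at scale roughly $n^{-1}$, write $T_{c_A}(\hat\mu_n,\hat\nu_m) - T_{c_A}(\mu,\nu) \ge \int f_A\,d(\hat\mu_n-\mu) + \int g_A\,d(\hat\nu_m-\nu)$ with $(f_A,g_A)$ dual potentials for $(\mu,\nu)$, control the discretization error on the primal level (here the bounded support of $\nu$ makes the Lipschitz-in-$A$ estimate for $c_A$ in the $x$-variable scale with $\|x\|^2$ rather than $\|x\|^4$), and apply the maximal inequality from Lemma 8 of \cite{chernozhukov2015comparison} over the net, combined again with Theorem \ref{thm: simplified} applied to each fixed $A$ in the net.

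The main obstacle I anticipate is step three — more precisely, verifying that the net argument for the lower bound still delivers the rate $\bar\varphi_{n,m,q}$ and not something worse: one must check that (a) the cardinality of the $n^{-1}$-net of the $O(\sqrt{M})$-ball in $\R^{d_x\times d_y}$ contributes only a $\log(n\wedge m)$ factor via the maximal inequality, (b) the primal-level discretization error, which involves $\int \|x\|^2\,d(\hat\mu_n+\mu)$-type quantities, is $O(n^{-1}\cdot n^{1/q})$ or smaller after truncation, and hence dominated by the other terms, and (c) the fixed-$A$ bounds from Theorem \ref{thm: simplified} are uniform over the net with constants depending only on $d_x,d_y,q,M$. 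Each of these is a quantitative bookkeeping exercise that parallels the proof of Theorem \ref{thm: upper bounds}, with the simplification that every occurrence of a moment of $\nu$ or a power of $\|y\|$ is replaced by a constant; the content of the proposition is essentially that this simplification lets one trade $\mathfrak m_{4q}(\mu)$ for $\mathfrak m_{2q}(\mu)$ at the cost of the harmless additive term $n^{(2-q)/q}$.
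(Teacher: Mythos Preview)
Your proposal is correct and follows essentially the same route as the paper: exploit that $|c_A(x,y)| \lesssim 1+\|x\|^2$ when $\|y\|\le 1$ so that the modified-cost argument of Theorem~\ref{thm: simplified} only needs $\mathfrak{m}_{2q}(\mu)$, and attribute the extra $n^{(2-q)/q}$ to the term $n^{-1}\sum_i(\|X_i\|^4-\mathfrak{m}_4(\mu))$ in $S_1$ (the paper uses the von Bahr--Esseen inequality here rather than truncation, but these are interchangeable). Two small corrections to your bookkeeping: the discretization scale for the net of $A$'s should be $\epsilon_{n,m}=(n\wedge m)^{-2/((d_x\wedge d_y)\vee 4)}$ as in the proof of Theorem~\ref{thm: upper bounds}, not $n^{-1}$; and Theorem~\ref{thm: simplified} is used only for the \emph{upper} bound on $S_2$ (the single matrix $A^\star$), whereas the lower bound over the net is handled purely by duality plus the maximal inequality of Lemma~\ref{lem: maximal inequality}---applying Theorem~\ref{thm: simplified} to each $A$ in the net would give you only expectation bounds, not the uniform control you need.
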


The proposition requires $\mu$ to have a finite $(2q)$-th moment instead of $4q$, while implying that the empirical estimator retains the rate $\varphi_{n,m}$ when $d_x \wedge d_y \ge 4$ provided that $q$ satisfies (\ref{eq: minimal}) (which entails $q > 4$ so that $n^{\frac{2-q}{q}} = o(n^{-1/2})$). The reason behind this is that, in Theorem \ref{thm: upper bounds} the cost $c_A$ is bounded as $|c_A(x,y)| \lesssim 1+\|x\|^4+\|y\|^4$, while if $\nu$ is supported in $B_{\calY}(0,1)$, one can use a bound $|c_A(x,y)| \lesssim 1+\| x \|^2$. This implies that dual potentials for $c_A$ have bounded $q$-th moments if $\mu$ has a finite $(2q)$-th moment. The proof of the proposition is a reasonably minor modification of that for Theorem \ref{thm: upper bounds}. In principle, one can also consider more general situations where two marginals have finite moments of different orders, which, however, will not be pursued here. 

The extra $n^{\frac{2-q}{q}}$ factor comes from the fact that $S_1(\hat{\mu}_n,\hat{\nu}_m)-S_1(\mu,\nu)$ can be approximated as $\frac{2}{n}\sum_{i=1}^n (\|X_i\|^4-\mathfrak{m}_4(\mu))$ and that $\| X_i \|^4$  has only a finite $(q/2)$-th moment. In fact, if $\mu$ is heavy-tailed and has less than 8-th moments, the rate of convergence for $D(\hat{\mu}_n,\hat{\nu}_m)$ can be arbitrarily slow, as the next proposition demonstrates.

\begin{proposition}[Heavy-tailed case]
\label{prop: heavy tail}
Let $d_x=d_y=1, n=m$, and $\nu \in \calP(B_\R(0,1))$. For any $\alpha \in (1,2)$, there exists a distribution $\mu$ on $\R$ for which the following holds: for $X \sim \mu$,
\begin{equation}
\E[|X|^{4\alpha}] = \infty, \quad  \E[|X|^{2q}] < \infty \ \text{for} \ q \in (0,2\alpha),
\label{eq: alpha moment}
\end{equation}
and 
\begin{equation}
\liminf_{n \to \infty} n^{\frac{\alpha-1}{\alpha}} \E \left[ \big| D(\hat{\mu}_n,\hat{\nu}_n) - D(\mu,\nu) \big |\right ] > 0. 
\label{eq: lower bound}
\end{equation}
\end{proposition}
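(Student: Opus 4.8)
The plan is to reduce the lower bound for $\E[|D(\hat\mu_n,\hat\nu_n)-D(\mu,\nu)|]$ to a lower bound on $\E[|S_1(\hat\mu_n,\hat\nu_n)-S_1(\mu,\nu)|]$ (up to controllable error terms), and then to engineer a heavy-tailed $\mu$ for which the latter is at least of order $n^{-(\alpha-1)/\alpha}$. In dimension one with $\nu$ compactly supported in $B_\R(0,1)$, write $D=S_1(\bar\mu,\bar\nu)+S_2(\bar\mu,\bar\nu)$. The term $S_1(\bar\mu,\bar\nu)$ is a sum of a $\mu$-only quantity $\int |x-x'|^4\,d\bar\mu\otimes\bar\mu$, a $\nu$-only quantity, and a product term $-4\int\|x\|^2\,d\bar\mu\int\|y\|^2\,d\bar\nu$. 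The dominant heavy-tailed contribution to $S_1(\hat{\bar\mu}_n,\hat{\bar\nu}_n)-S_1(\bar\mu,\bar\nu)$ should be the fluctuation of $\frac1n\sum_i\|X_i\|^4$ (after centering), which has only a finite $(q/2)$-th moment for $q<2\alpha$, hence by classical stable-law theory fluctuates on the scale $n^{-(\alpha-1)/\alpha}$ if $\mu$ is chosen in the domain of attraction of an $\alpha$-stable law for $\|X\|^4$; the remaining pieces of $S_1$ (those involving $\bar\nu$, and the lower moments of $\mu$) fluctuate at the faster rate $O(n^{-1/2})$ and are asymptotically negligible on the scale $n^{-(\alpha-1)/\alpha}$.

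Concretely, I would choose $\mu$ so that $|X|^4$ is in the domain of attraction of a one-sided $\alpha$-stable law, e.g. $\Prob(|X|>t)\sim c\,t^{-4\alpha}$ as $t\to\infty$ (so $\E|X|^{4\alpha}=\infty$ but $\E|X|^{2q}<\infty$ for $q<2\alpha$, giving \eqref{eq: alpha moment}), and symmetric about $0$ to keep the centering simple ($\E[X]=0$, so $\bar\mu=\mu$). Then $n^{(\alpha-1)/\alpha}\big(\frac1n\sum_{i=1}^n(|X_i|^4-\E|X_i|^4\wedge\text{truncation})\big)$ converges in distribution to a nondegenerate $\alpha$-stable random variable; since this limit is a.s. nonzero and the normalized sequence is uniformly integrable after an appropriate truncation argument (using $\E|X|^{2q}<\infty$ with $q/2<\alpha$ but $q/2$ close to $\alpha$), one gets $\liminf_n n^{(\alpha-1)/\alpha}\E\big|\frac1n\sum_i(|X_i|^4-\E|X_i|^4)\big|>0$. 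The key algebraic step is to isolate this term inside $D(\hat\mu_n,\hat\nu_n)-D(\mu,\nu)$: I would expand $D$ fully, identify $2\big(\frac1n\sum_i\|X_i\|^4-\mathfrak m_4(\mu)\big)$ as the leading term (as indicated in the discussion preceding the proposition), bound all other terms involving $\hat\mu_n$ either by moments of order $\le 2\alpha$ times bounded $\nu$-quantities (hence $O_{\Prob}(n^{-1/2})$ in $L^1$) or by the $S_2$ difference, which I would control via Proposition \ref{prop: semibounded} (or Theorem \ref{thm: upper bounds}) to be $O(\bar\varphi_{n,n,q}+n^{(2-q)/q})=o(n^{-(\alpha-1)/\alpha})$ for $q$ chosen in an appropriate window below $2\alpha$ and the hidden constant finite since $\mu$ has the requisite moments.

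The main obstacle I anticipate is the uniform integrability / moment-convergence step: convergence in distribution of the normalized heavy-tailed sum to an $\alpha$-stable law does not by itself yield convergence (or a $\liminf$ lower bound) of the $L^1$ norm, because the normalizing sequence $n^{-1/\alpha}$ (here applied to $|X|^4$) sits exactly at the boundary of integrability of the stable limit. I would handle this by a truncation argument in the spirit of \cite{del1999central}: split $|X_i|^4$ at a level $\tau_n$ growing slowly, show the truncated sum, once normalized, is bounded in $L^{1+\epsilon}$ for small $\epsilon>0$ (using $\E|X|^{2q}<\infty$ for some $q<2\alpha$ with $q$ close enough to $2\alpha$ that $q/2>\alpha$... wait — instead use $q/2$ slightly less than $\alpha$ and exploit that the truncated variable has all moments), hence uniformly integrable, so its $L^1$ norm converges to $\E|\text{stable}|>0$; and control the contribution of the tail part separately, showing it is $o(1)$ after normalization in probability and, via a direct moment bound at exponent just below $\alpha$, also in a suitable averaged sense. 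A cleaner alternative, which I would try first, is a soft one-sided argument: by Fatou applied along a subsequence realizing the $\liminf$, together with the in-distribution convergence and the fact that $|x|\mapsto|x|$ is continuous and the limit is a.s. positive, one gets $\liminf_n n^{(\alpha-1)/\alpha}\E[\,\cdot\,]\ge\E|\text{stable limit}|$ provided one first establishes a lower bound $\E|Z_n|\ge\E[|Z_n|\wedge K]$ and passes to the limit in the bounded truncation $|Z_n|\wedge K$ — this avoids uniform integrability entirely at the cost of a weaker but still strictly positive constant, which suffices for \eqref{eq: lower bound}. The remaining routine work is the bookkeeping that separates the heavy-tailed term from the $O(n^{-1/2})$ and $o(n^{-(\alpha-1)/\alpha})$ remainders, and verifying that a symmetric $\mu$ with tail exponent $4\alpha$ on $|X|$ indeed satisfies \eqref{eq: alpha moment}.
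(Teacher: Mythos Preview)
Your approach is essentially the paper's: construct a symmetric $\mu$ with Pareto tail $\Prob(|X|>t)\asymp t^{-4\alpha}$, isolate $\frac{2}{n}\sum_i(X_i^4-\mathfrak m_4(\mu))$ as the leading term via the $S_1$/$S_2$ decomposition, and use stable-law convergence plus Fatou to get the positive $\liminf$. The paper additionally inserts a symmetrization step (passing to $\epsilon_i(X_i^4-\mathfrak m_4)$) so as to work with a \emph{symmetric} stable limit and simplify the characteristic-function computation, but your direct route via the unsymmetrized sum converges to a totally skewed $\alpha$-stable law and works equally well. Your ``cleaner alternative'' $\E[|Z_n|]\ge\E[|Z_n|\wedge K]\to\E[|Z_\alpha|\wedge K]$ followed by $K\to\infty$ is equivalent to the paper's Skorohod-plus-Fatou step.

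There is one slip to correct. You propose to bound the $S_2$ difference by the full bound $\bar\varphi_{n,n,q}+n^{(2-q)/q}$ from Proposition~\ref{prop: semibounded} and assert this is $o(n^{-(\alpha-1)/\alpha})$ for $q$ just below $2\alpha$. But $(2-q)/q<(1-\alpha)/\alpha$ iff $q>2\alpha$, so for the admissible range $q<2\alpha$ the term $n^{(2-q)/q}$ is \emph{larger} than $n^{-(\alpha-1)/\alpha}$, not smaller. The point is that the $n^{(2-q)/q}$ contribution in Proposition~\ref{prop: semibounded} arises \emph{precisely} from the heavy-tailed $S_1$ fluctuation you are trying to isolate as the main term, so it must not be counted again in the remainder. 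The paper instead extracts from the \emph{proof} of Proposition~\ref{prop: semibounded} that the $S_2$ difference alone satisfies $\E[|S_2(\hat\mu_n,\hat\nu_n)-S_2(\mu,\nu)|]\lesssim n^{-1/2}\sqrt{\log n}+n^{(3-2q)/(2q)}\log n$ in dimension one, and this \emph{is} $o(n^{-(\alpha-1)/\alpha})$ for $q$ close to $2\alpha$ (since $(3-4\alpha)/(4\alpha)<(1-\alpha)/\alpha$). With this correction your argument goes through. Note also that Theorem~\ref{thm: upper bounds} is not an available alternative here: it requires $\mathfrak m_{4q}(\mu)<\infty$, which fails for any $q\ge\alpha$.
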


The proof constructs $\mu$ so that $X^4$ with $X \sim \mu$ is in the domain of attraction of an $\alpha$-stable law.\footnote{See Chapter 9 in \cite{breiman1992probability} for stable distributions.} Indeed, the proof derives an explicit lower bound 
\[
 \E \left[ \big| D(\hat{\mu}_n,\hat{\nu}_n) - D(\mu,\nu) \big |\right ] \ge  n^{\frac{1-\alpha}{\alpha}} \E[|Z_\alpha|] + o\big ( n^{\frac{1-\alpha}{\alpha}} \big),
\]
where $Z_\alpha$ follows a symmetric $\alpha$-stable law with characteristic function $e^{-\mathsf{c}_{\alpha} |t|^\alpha}$ with $\mathsf{c}_{\alpha} := \alpha \int_0^\infty (1-\cos x) x^{-\alpha-1} \, dx$. 

Finally, as another special case, we shall consider the semidiscrete case where $\mu$ is general but $\nu$ is finitely discrete, so that $\nu$ is of the form 
\[
\nu = \sum_{j=1}^\ell \nu_j \delta_{y_j},
\]
where $(\nu_1,\dots,\nu_\ell)^\top$ is a simplex vector with positive elements (i.e., $\nu_j > 0$ for all $j$ and $\sum_{j=1}^\ell \nu_j = 1$) and $\{ y_1,\dots,y_\ell \} \subset \calY$.
For simplicity, we assume $\max_{1 \le j \le \ell} \| y_j \| \le 1$.

\begin{proposition}[Semidiscrete case]
\label{prop: semidiscrete}
Consider the semidiscrete setting considered above.
For given $q \in (2,\infty)$ and $M \ge 1$, we have
\[
\sup_{\mu \in \calP_{2q}(\calX): \mathfrak{m}_{2q}(\mu) \le M} \E \left[ \big| D(\hat{\mu}_n,\hat{\nu}_m) - D(\mu,\nu) \big |\right ] \lesssim (n \wedge m)^{-1/2} + n^{\frac{2-q}{q}},
\]
where the hidden constant depends only on $d_x,d_y,q, M,\ell$ and $\min_{1 \le j \le \ell} \nu_j$. 
\end{proposition}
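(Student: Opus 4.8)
The plan is to specialise the variational-representation strategy behind Theorem~\ref{thm: upper bounds} to the semidiscrete setting, where the underlying optimal transport problems have a \emph{finite-dimensional} dual and the entropy-integral arguments of \cite{staudt2025convergence} become unnecessary. Write $\bar{x} = \E_\mu[X]$ and $\bar{y} = \E_\nu[Y]$, and use $D(\rho,\sigma) = S_1(\bar{\rho},\bar{\sigma}) + S_2(\bar{\rho},\bar{\sigma})$ on both $(\mu,\nu)$ and $(\hat\mu_n,\hat\nu_m)$. The term $S_1$ only involves $\int \|x-x'\|^4\,d\rho\otimes\rho$, $\int\|y-y'\|^4\,d\sigma\otimes\sigma$, and $\int\|x\|^2\|y\|^2\,d\rho\otimes\sigma$ (which, after centering, equals $\mathfrak{m}_2(\bar\rho)\mathfrak{m}_2(\bar\sigma)$). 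Since $\|Y\|\le 1$, all $\sigma$-side errors are $O(m^{-1/2})$ in expectation; on the $\mu$-side, $\|X\|^2$ has finite variance (as $2q>4$), so $\mathfrak{m}_2(\cdot)$-type quantities are estimated at rate $n^{-1/2}$, while the dominant term $\frac1{n^2}\sum_{i,k}\|X_i-X_k\|^4$ reduces, after expanding the square, to a polynomial in empirical moments whose only bottleneck is $\frac1n\sum_i\|X_i\|^4$; as $\|X\|^4$ has only a finite $(q/2)$-th moment with $q/2>1$, the Marcinkiewicz--Zygmund inequality yields $\E\big[|S_1(\bar{\hat\mu}_n,\bar{\hat\nu}_m) - S_1(\bar\mu,\bar\nu)|\big] \lesssim (n\wedge m)^{-1/2} + n^{(2-q)/q}$, the extra term being unavoidable precisely because of the missing moments of $\|X\|^4$.

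For $S_2$ I would use the sandwich bound from the discussion after Theorem~\ref{thm: upper bounds}: if $A^\star$ and $\hat A^\star$ achieve the infima in \eqref{eq: variational} for $(\bar\mu,\bar\nu)$ and $(\bar{\hat\mu}_n,\bar{\hat\nu}_m)$, then since $\mathfrak{m}_2(\bar\nu)\le 1$, $\mathfrak{m}_2(\bar{\hat\nu}_m)\le 1$, and $\mathfrak{m}_2(\mu)\le M^{1/q}$, Lemma~\ref{lem: variational} confines $A^\star$ and --- on the high-probability event $\mathcal{E}$ where $\frac1n\sum_i\|X_i\|^2 \le 2M^{1/q}$ and every atom is charged by $\hat\nu_m$ with weight at least $\tfrac12\min_j\nu_j$ --- also $\hat A^\star$ to a fixed ball $K = \{A: \|A\|_{\op}\le R\}$ with $R$ depending only on $M$. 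On $\mathcal{E}$ this gives $|S_2(\bar{\hat\mu}_n,\bar{\hat\nu}_m) - S_2(\bar\mu,\bar\nu)| \le \sup_{A\in K}|T_{c_A}(\bar{\hat\mu}_n,\bar{\hat\nu}_m) - T_{c_A}(\bar\mu,\bar\nu)|$, while on $\mathcal{E}^c$ the left-hand side is controlled crudely by $\sup_{A\in K}|T_{c_A}|\lesssim 1 + \mathfrak{m}_2(\cdot)$ together with $\Prob(\mathcal{E}^c)\lesssim n^{-q/2}+\ell e^{-cm}$ and Hölder's inequality, which costs at most $n^{(2-q)/q}$. Next I would replace the empirically centered $\bar{\hat\mu}_n,\bar{\hat\nu}_m$ by $\tilde\mu_n,\tilde\nu_m$, the empirical measures recentered by the \emph{population} means $\bar x,\bar y$; since transporting a shifted marginal amounts to shifting the coupling, one has $|T_{c_A}(\rho_v,\sigma_w) - T_{c_A}(\rho,\sigma)| \le C(R)(\|v\|+\|w\|)(1+\mathfrak{m}_2(\rho))$ for any shifts $v,w$, uniformly in $A\in K$, and with $v = \bar X_n - \bar x$, $w = \bar Y_m - \bar y$ this perturbation is $O((n\wedge m)^{-1/2})$ in expectation. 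After this reduction $\tilde\mu_n$ is the empirical measure of $n$ i.i.d.\ draws from $\bar\mu$, and $\tilde\nu_m$ is supported on the \emph{fixed} atoms $z_j := y_j - \bar y$, sharing its support with $\bar\nu$.

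The heart of the proof is then $\E\big[\sup_{A\in K}|T_{c_A}(\tilde\mu_n,\tilde\nu_m) - T_{c_A}(\bar\mu,\bar\nu)|\,\mathbbm{1}_{\mathcal{E}}\big]\lesssim (n\wedge m)^{-1/2}$. For fixed $A$, semidiscrete duality gives $T_{c_A}(\bar\mu,\bar\nu) = \sup_{g\in\R^\ell}\{\int f_{A,g}\,d\bar\mu + \sum_j g_j\nu_j\}$ with $f_{A,g}(x) = \min_{1\le j\le\ell}(c_A(x,z_j)-g_j)$, and likewise for $(\tilde\mu_n,\tilde\nu_m)$ on the same atoms; evaluating the two dual problems at the respective optima $g^A$ and $\hat g^A$ (normalised so $\min_j g^A_j = \min_j\hat g^A_j = 0$) yields
\[
|T_{c_A}(\tilde\mu_n,\tilde\nu_m) - T_{c_A}(\bar\mu,\bar\nu)| \le \sup_{g\in G}\Big|\int f_{A,g}\,d(\tilde\mu_n-\bar\mu)\Big| + C_1\sum_{j=1}^\ell |\hat\nu_{m,j}-\nu_j|,
\]
where $C_1$ bounds $\|g^A\|_\infty$ and $\|\hat g^A\|_\infty$ (on $\mathcal{E}$) and $G$ is the corresponding set of normalised dual vectors. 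The \emph{main obstacle} is establishing the uniform bound $C_1 = C_1(M,\min_j\nu_j,d_x,d_y)$, which I would obtain by a Laguerre-cell argument: if $g_{j_0} = \max_k g^A_k$ and $g_{j_1} = \min_k g^A_k$, every $x$ in the $j_1$-th Laguerre cell satisfies $c_A(x,z_{j_0}) - c_A(x,z_{j_1}) \ge g_{j_0} - g_{j_1}$, while $|c_A(x,z_{j_0}) - c_A(x,z_{j_1})| \lesssim 1 + \|x\|^2$ uniformly in $A\in K$ because $\|z_j\|\le 2$; thus a large spread $g_{j_0}-g_{j_1}$ forces that cell into a region $\{\|x\|\gtrsim (g_{j_0}-g_{j_1})^{1/2}\}$ of small $\bar\mu$-mass, contradicting $\bar\mu(\text{cell}) = \nu_{j_1}\ge\min_j\nu_j$ and $\mathfrak{m}_2(\bar\mu)\le M^{1/q}$ via Chebyshev's inequality (the same reasoning handles $\hat g^A$ on $\mathcal{E}$). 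Granting this, $\mathcal{H} = \{f_{A,g}: A\in K,\ g\in G\}$ consists of pointwise minima of $\ell$ functions from a bounded subset of the $(d_x+2)$-dimensional space spanned by $1, x_1,\dots,x_{d_x}, \|x\|^2$, hence is VC-subgraph with index depending only on $d_x,\ell$ and admits the envelope $F(x)\lesssim 1+\|x\|^2\in L^2(\bar\mu)$ (as $\mathfrak{m}_4(\bar\mu)\le M^{2/q}<\infty$); a standard maximal inequality for VC-subgraph classes with square-integrable envelope then gives $\E[\sup_{A\in K, g\in G}|\int f_{A,g}\,d(\tilde\mu_n-\bar\mu)|]\lesssim n^{-1/2}$, while $\E\sum_j|\hat\nu_{m,j}-\nu_j|\lesssim \sqrt{\ell/m}$. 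Assembling the contributions of $S_1$, of $\mathcal{E}^c$, of the recentering perturbation, and of the two displayed terms yields $\E[|D(\hat\mu_n,\hat\nu_m) - D(\mu,\nu)|]\lesssim (n\wedge m)^{-1/2} + n^{(2-q)/q}$ with a constant depending only on $d_x,d_y,q,M,\ell$ and $\min_j\nu_j$; everything apart from the uniform dual bound is bookkeeping around the decomposition and the recentering.
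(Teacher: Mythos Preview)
Your proposal is correct and follows the same overall architecture as the paper's proof---variational representation, semidiscrete duality, a uniform bound on the dual vectors, and a VC-subgraph argument---but the execution differs in two noteworthy ways. First, the paper handles the upper and lower bounds on $S_2(\hat\mu_n,\hat\nu_m)-S_2(\mu,\nu)$ asymmetrically: the upper bound uses a single matrix $A^\star$ and is controlled via Theorem~\ref{thm: simplified} (i.e., first the compactly-supported rate \eqref{eq: semidiscrete}, then the extension to unbounded $\mu$), while the lower bound uses only the \emph{population} semidual optimizer $g_A$, so the VC class is $\{g_A^{c_A}:\|A\|_{\op}\le R\}$. You instead bound $|S_2(\cdot)-S_2(\cdot)|\le\sup_{A\in K}|T_{c_A}(\cdot)-T_{c_A}(\cdot)|$ symmetrically and use both the population and empirical dual optimizers, which forces you to enlarge the VC class to $\{f_{A,g}:A\in K,\,g\in G\}$ with $g$ ranging freely over a bounded box; this is still VC-subgraph for the same reason, so the empirical-process bound goes through. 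The payoff is that you avoid Theorem~\ref{thm: simplified} entirely, making the argument more self-contained. Second, your uniform bound on the dual vectors via the Laguerre-cell/Chebyshev argument is correct (note $\bar\mu(L_{j_1})\ge\nu_{j_1}$ rather than equality in general, which suffices), but the paper's route is shorter: evaluating the semidual objective at the suboptimal choice $\tilde g$ with $\tilde g_j=g_\ell$ for $j\ne 1$ and $\tilde g_1=g_1$ (after ordering $g_1\le\cdots\le g_\ell$) gives directly $T_{c_A}(\mu',\nu')\le\underline{\nu}'(g_1-g_\ell)+\max_j\int c_A(x,y_j)\,d\mu'$, whence $g_\ell-g_1\le(1/\underline{\nu}')\cdot K_{\mathfrak{m}_2(\mu'),\|A\|_{\op}}$.
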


In particular, if $\mu$ has a finite 8-th moment, 
\[
\E \left[ \big| D(\hat{\mu}_n,\hat{\nu}_m) - D(\mu,\nu) \big |\right ] \lesssim (n \wedge m)^{-1/2}.
\]
In contrast to Theorem \ref{thm: upper bounds} and Proposition \ref{prop: semibounded}, the bound in Proposition \ref{prop: semidiscrete} does not involve the extra $\sqrt{\log (n \wedge m)}$ factor. This is because, in the semidiscrete setting, the dual potentials $(f_A,g_A)$ have simple expressions that enable us to avoid using discretization of the set of $A$ (cf. the discussion after Theorem \ref{thm: upper bounds}). 

\begin{remark}[Penalized Wasserstein alignment and Wasserstein Procrustes]
Our proof technique can be easily adapted to penalized Wasserstein alignment considered in \cite{pal2025wasserstein}. Let $\{ c_\theta : \theta \in \Theta \}$ be a family of continuous cost functions from $\calX \times \calY$ into $\R$ and let $\mathrm{pen}: \Theta \to \R$ be a (bounded) penalty function. The penalized Wasserstein alignment problem considered in \cite{pal2025wasserstein} reads as
\[
\inf_{\theta \in \Theta}\big \{ T_{c_\theta}(\mu,\nu) + \mathrm{pen}(\theta) \big \},
\]
which encompasses the GW problem (via the variational representation) and Wasserstein Procrustes \cite{xing2015normalized,grave2019unsupervised}. For simplicity, we shall focus on  Wasserstein Procrustes. For $\mu,\nu \in \calP_2(\R^d)$, the (squared) Procrustes-Wasserstein distance is defined by 
\[
\tilde{W}_2^2(\mu,\nu) := \inf_{O \in \mathcal{O}(d)} W_{2}^2(O_{\#}\mu,\nu) = \inf_{O \in \mathcal{O}(d)} T_{\tilde{c}_{O}}(\mu,\nu),
\]
where $\mathcal{O}(d)$ denotes the set of $d \times d$ orthogonal matrices and $\tilde{c}_O(x,y) = \|Ox-y\|^2 = \|x\|^2+\|y\|^2-2x^\top O^\top y$.

\begin{proposition}
Consider the above setting. Suppose that $\mu,\nu \in \calP_{2q}(\R^d)$ for some $q \in (2,\infty)$. Then, 
\begin{multline*}
\E\big[ \big|\tilde{W}_2^2(\hat{\mu}_n,\hat{\nu}_m) -  \tilde{W}_2^2 (\mu,\nu) \big| \big] \lesssim (n \wedge m)^{-\frac{2}{d}} (\log (n \wedge m))^{\mathbbm{1}_{\{ d=4 \}}}  +(n \wedge m)^{-\frac{1}{2}}\sqrt{\log (n \wedge m)} \\
 + (n \wedge m)^{\frac{1-q}{q}} \Big \{ (n \wedge m)^{\frac{2(d-1)}{q}} \wedge  (n \wedge m)^{\frac{d(d-1)}{2q}} \Big \}\log(n \wedge m).
\end{multline*}
The hidden constant depends only on $q, d, \mathfrak{m}_{2q}(\mu)$ and $\mathfrak{m}_{2q}(\nu)$. 
\end{proposition}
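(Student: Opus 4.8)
The plan is to mimic the proof of Theorem~\ref{thm: upper bounds}, but working directly with the family of costs $\{\tilde c_O : O \in \mathcal{O}(d)\}$ in place of $\{c_A : \|A\|_{\op} \le \sqrt{\mathfrak{m}_2(\mu)\mathfrak{m}_2(\nu)}/2\}$, and exploiting that $\mathcal{O}(d)$ is already compact so no artificial truncation of the parameter set is needed. Write $\tilde W_2^2(\mu,\nu) = \inf_{O \in \mathcal{O}(d)} T_{\tilde c_O}(\mu,\nu)$ and note $\tilde c_O(x,y) = \|x\|^2 + \|y\|^2 - 2x^\top O^\top y$. The $\|x\|^2 + \|y\|^2$ part contributes $\int \|x\|^2\,d(\hat\mu_n - \mu) + \int \|y\|^2\,d(\hat\nu_m - \nu)$ to $\tilde W_2^2(\hat\mu_n,\hat\nu_m) - \tilde W_2^2(\mu,\nu)$ only after we pull it out of the infimum (it does not depend on $O$), leaving the genuinely $O$-dependent part governed by $\hat c_O(x,y) := -2 x^\top O^\top y$. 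This separable part is an i.i.d.\ average of functions with finite $q$-th moment (since $\mathbb{E}[\|X\|^{2q}], \mathbb{E}[\|Y\|^{2q}] < \infty$), so by the Marcinkiewicz--Zygmund / Rosenthal inequalities its expected absolute value is $O((n\wedge m)^{(1-q)/q} \vee (n\wedge m)^{-1/2})$, which is dominated by the claimed bound.

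For the $O$-dependent part, I follow the two-sided sandwich exactly as in the sketch after Theorem~\ref{thm: upper bounds}. Let $O^\star$ achieve the infimum defining $\tilde W_2^2(\mu,\nu)$ (which exists by compactness of $\mathcal{O}(d)$ and continuity). The \emph{upper} bound
$$\tilde W_2^2(\hat\mu_n,\hat\nu_m) - \tilde W_2^2(\mu,\nu) \le T_{\tilde c_{O^\star}}(\hat\mu_n,\hat\nu_m) - T_{\tilde c_{O^\star}}(\mu,\nu)$$
involves a single fixed cost, so I apply Theorem~\ref{thm: simplified} (the modification of Theorem 1.1 in \cite{staudt2025convergence}): since $\tilde c_{O^\star}$ is a quadratic cost on $\R^d \times \R^d$, its dual potentials can be taken to be $\tilde c_{O^\star}$-concave, hence (after subtracting $\|x\|^2$) concave, and with moment control inherited from $\mathfrak{m}_{2q}(\mu), \mathfrak{m}_{2q}(\nu)$. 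This yields the Wasserstein-type rate $(n\wedge m)^{-2/d}(\log(n\wedge m))^{\mathbbm 1\{d=4\}}$ together with the lower-order moment-tail terms. For the \emph{lower} bound, OT duality gives, for each $O$, dual potentials $(f_O, g_O)$ for $(\mu,\nu)$ w.r.t.\ $\tilde c_O$ with
$$T_{\tilde c_O}(\hat\mu_n,\hat\nu_m) - T_{\tilde c_O}(\mu,\nu) \ge \int f_O\,d(\hat\mu_n-\mu) + \int g_O\,d(\hat\nu_m-\nu),$$
and taking $\inf_O$ on the left is bounded below by $-\sup_O \big| \int f_O\,d(\hat\mu_n-\mu) + \int g_O\,d(\hat\nu_m-\nu)\big|$; here one cannot swap $\inf$ and the expectation trivially, but since the sandwich is two-sided one controls $\mathbb{E}\big|\tilde W_2^2(\hat\mu_n,\hat\nu_m) - \tilde W_2^2(\mu,\nu)\big|$ by the max of the two one-sided controls.

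To handle $\sup_O$ over the uncountable set $\mathcal{O}(d)$ I discretize: cover $\mathcal{O}(d)$ (a compact subset of $\R^{d\times d}$ of dimension $d(d-1)/2$) by an $\epsilon$-net of cardinality $\lesssim \epsilon^{-d(d-1)/2}$, apply the maximal inequality for finite classes from Lemma~8 in \cite{chernozhukov2015comparison} at the net points, and bound the discretization error on the \emph{primal} level using $|\tilde c_O(x,y) - \tilde c_{O'}(x,y)| \le 2\|O-O'\|_{\op}\|x\|\|y\|$ together with the moment bounds, just as in the proof of Theorem~\ref{thm: upper bounds}. Optimizing $\epsilon$ polynomially in $(n\wedge m)$ produces the $\sqrt{\log(n\wedge m)}$ factor on the $(n\wedge m)^{-1/2}$ term and the remaining moment-tail term $(n\wedge m)^{(1-q)/q}\{(n\wedge m)^{2(d-1)/q} \wedge (n\wedge m)^{d(d-1)/2q}\}\log(n\wedge m)$ — the two competing exponents coming, respectively, from the crude bound on the uniform fluctuation of $\int f_O\,d(\hat\mu_n-\mu)$ over the net (scaling with the net's covering number via $d(d-1)/2$) versus the direct union bound using the ambient dimension. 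Finally, collecting the separable part, the upper-bound contribution, and the discretized lower-bound contribution, and noting that the $d(d-1)$-type terms subsume the corresponding terms, gives the stated estimate. The main obstacle, as in Theorem~\ref{thm: upper bounds}, is the lower bound: ensuring that the dual potentials $(f_O,g_O)$ — whose regularity in $O$ is unclear in the unbounded setting — admit \emph{uniform} moment bounds good enough that both the finite-net maximal inequality and the primal-level discretization error stay within the target rate; this is where the interplay between the net granularity $\epsilon$, the covering number exponent $d(d-1)/2$, and the available moment order $q$ is delicate and dictates the precise form of the last term.
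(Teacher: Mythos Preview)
Your proposal is correct and follows essentially the same approach as the paper: the paper explicitly says the proof is ``almost identical to Theorem~\ref{thm: upper bounds}'' once one notes that (i) a version of Lemma~\ref{lem: bounded rate} with $r^2$ in place of $r^4$ holds for $T_{\tilde c_O}$, and (ii) the $\epsilon$-covering number of $\mathcal{O}(d)$ under $\|\cdot\|_{\op}$ is at most $(K/\epsilon)^{d(d-1)/2}$. You identified both of these ingredients and correctly observed that the compactness of $\mathcal{O}(d)$ removes the need for the event $\mathcal{E}_{n,m}$ used in the GW proof.

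One minor remark: your explanation of the two exponents $2(d-1)/q$ and $d(d-1)/(2q)$ as arising from ``two different bounding techniques'' is not quite right; just as in Theorem~\ref{thm: upper bounds}, both come from the single bound $|\mathcal{N}_\epsilon| \lesssim \epsilon^{-d(d-1)/2}$ with $\epsilon = (n\wedge m)^{-2/(d\vee 4)}$, the two exponents corresponding to the regimes $d \ge 4$ and $d < 4$ respectively (and the stated exponents are in fact slightly looser than what this computation gives, but still valid as upper bounds).
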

The proof is almost identical to Theorem \ref{thm: upper bounds}, given that a version of Lemma \ref{lem: bounded rate} below (with $r^4$ replaced by $r^2$)  holds for $T_{\tilde{c}_{O}}$ (see \cite{staudt2025convergence}) and that the $\epsilon$-covering number of the orthogonal group $\mathcal{O}(d)$ under $\| \cdot \|_{\op}$ is at most $(K/\epsilon)^{d(d-1)/2}$ for $0 < \epsilon < 1$ for some universal constant $K$ (cf. Theorem 7 in \cite{szarek1998metric}). Since the modification is minor, we omit the proof for brevity.  Furthermore, a minor modification of the proof of Theorem \ref{thm: lower bound} below yields that $(n \wedge m)^{-\frac{2}{d \vee 4}}$ matches a minimax lower bound (up to a logarithmic factor) over the class of distributions supported in the unit ball. 
\end{remark}

\subsection{Minimax lower bounds}

For minimax lower bounds, it suffices to consider a smaller class of distributions than that in Theorem \ref{thm: upper bounds}. 

\begin{theorem}[Minimax lower bounds]
\label{thm: lower bound}
The following minimax lower bound holds: 
\begin{equation}
\begin{split}
&\inf_{\hat{D}_{n,m}}\sup_{(\mu, \nu)\in \calP(B_{\calX}(0,1))\times \calP(B_{\calX}(0,1))}\E\left[ \big|\hat{D}_{n,m} - D(\mu, \nu)\big| \right] \\
&\qquad \gtrsim  (n\wedge m)^{-\frac{2}{(d_x \wedge d_y)\vee 4}}(\log (n \wedge m))^{-\frac{2}{d_x \wedge d_y} \mathbbm{1}_{\{ d_x \wedge d_y > 4\}}}
\end{split}
\label{eq: minimax}
\end{equation}
where the infimum is taken  over all estimators of $D(\mu, \nu)$ constructed from i.i.d. samples from $\mu$ and $\nu$ of sizes $n$ and $m$, respectively, that are independent of each other. The hidden constant depends only on $d_x$ and $d_y$.
\end{theorem}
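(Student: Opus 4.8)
The plan is to use Fano's method (or equivalently Le Cam's two-point method, but a multiple-hypothesis construction will be needed to obtain the correct exponent) applied to a cleverly chosen finite family of distributions supported in the unit ball. The construction will build on the standard lower-bound machinery for Wasserstein-type functionals as in \cite{niles2022estimation}: fix one of the two marginals, say $\nu$, to be a fixed reference measure (e.g.\ the uniform distribution on $B_{\calY}(0,1)$, or a scaled discrete measure), and construct a large family $\{\mu_\omega\}_{\omega \in \Omega}$ of perturbations of a reference measure $\mu_0$ on $B_{\calX}(0,1)$, indexed by binary strings $\omega \in \{0,1\}^{N}$, with $N \asymp \epsilon^{-d}$ where $\epsilon$ is a resolution parameter. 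Each perturbation will be a local bump of size $\epsilon$ in space and height chosen so that $\chi^2(\mu_\omega \| \mu_0) \lesssim \epsilon^{d} \cdot (\text{per-bump } \chi^2)$ and so that after $n$ samples the total information $n \cdot \mathrm{KL}$ is controlled. The key quantitative input one must engineer is a \emph{separation} lower bound: for $\omega \ne \omega'$ differing in a constant fraction of coordinates, $|D(\mu_\omega,\nu) - D(\mu_{\omega'},\nu)|$ must be bounded below by a quantity of order $\epsilon^{4}$ (the square of an $\epsilon$-scale displacement, squared again because $D = \mathsf{GW}_{2,2}^2$ involves fourth powers / products of squared distances), and then optimizing $n\,\epsilon^{-d}\cdot \epsilon^{\text{height}} \asymp 1$ against the separation will yield the rate $(n\wedge m)^{-2/(d_x\wedge d_y)}$ with the stated logarithmic correction absorbing the slack between Fano's $\log N$ and the crude counting.

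The main obstacle — and where genuinely new ideas beyond \cite{niles2022estimation} are required — is precisely this separation estimate, because $\mathsf{GW}$ is invariant under isometries of $\R^{d_x}$ (and of $\R^{d_y}$). A naive bump perturbation $\mu_\omega$ may be related to $\mu_0$ by an approximate isometry, or two different $\mu_\omega, \mu_{\omega'}$ may be nearly isometric to each other, which would collapse the GW separation to zero even though the Wasserstein separation is of order $\epsilon$. I would handle this by designing the reference measure $\mu_0$ and the perturbation geometry to \emph{break} the isometry invariance: e.g.\ place the bumps at locations with an asymmetric, rigid global configuration (anisotropic covariance, or an explicit "anchor" point mass of large weight away from the bump region), so that any isometry $f$ with $f_\#\mu_\omega$ close to $\mu_{\omega'}$ is forced to be close to the identity. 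A clean way to make this rigorous is to invoke the stability-type bound from Lemma~\ref{lem: GW}(ii): the lower bound $\big(32(\lambda_{\min}^2(\Sigma_\mu)+\lambda_{\min}^2(\Sigma_\nu))\big)^{1/4}\inf_{U\in E(d)}W_2(\mu,U_\#\nu) \le \mathsf{GW}_{2,2}(\mu,\nu)$ in the equal-dimension case converts an \emph{isometry-invariant} Wasserstein separation into a GW separation, provided $\lambda_{\min}$ of the covariances stays bounded below — which one arranges by choosing $\mu_0$ (and $\nu$) to be nondegenerate, e.g.\ with covariance bounded below by a fixed multiple of the identity. Then it suffices to prove $\inf_{U\in E(d_x)} W_2(\mu_\omega, U_\# \mu_{\omega'}) \gtrsim \epsilon$, which is the standard packing-in-Wasserstein argument but now with the extra quantifier over $U$; the anchor/anisotropy trick pins $U$ down to a neighborhood of the identity, reducing it to the ordinary $W_2$ packing estimate.

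For the unequal-dimension case $d_x \ne d_y$, Lemma~\ref{lem: GW}(ii) does not directly apply, so I would instead reduce to the equal-dimension case by embedding: take $d := d_x \wedge d_y$, build the hard family on $\R^d$ (the smaller space), and on the larger space fix the corresponding reference measure to be (a reflection of) the same family pushed forward by a fixed isometric embedding $\R^d \hookrightarrow \R^{d_x \vee d_y}$, using that $\mathsf{GW}$ between a measure on $\R^{d}$ and its isometric image in $\R^{d_x\vee d_y}$ behaves as in dimension $d$; this is the GW analogue of the LCA phenomenon and is consistent with the rate exponent $2/(d_x\wedge d_y)$ appearing in the bound. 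The remaining steps are then routine: (i) verify $\mu_\omega, \nu \in \calP(B(0,1))$ (rescale the bumps and anchors to fit); (ii) compute $\mathrm{KL}(\mu_\omega^{\otimes n}\otimes\nu^{\otimes m} \,\|\, \mu_{\omega'}^{\otimes n}\otimes\nu^{\otimes m}) = n\,\mathrm{KL}(\mu_\omega\|\mu_{\omega'}) \lesssim n\epsilon^{d}\cdot h^2$ where $h$ is the bump height, via $\chi^2$ bounds; (iii) apply the generalized Fano inequality (e.g.\ Theorem in Tsybakov's book, or the version used in \cite{niles2022estimation}) over the Gilbert--Varshamov packing subset of $\{0,1\}^N$ of size $2^{cN}$; (iv) choose $\epsilon \asymp (n\wedge m)^{-2/((d_x\wedge d_y)\vee 4)\cdot (1/4)\cdot(\dots)}$ — concretely set the bump height and $\epsilon$ so that $n\epsilon^{d+2\cdot(\text{height exponent})}\asymp N \asymp \epsilon^{-d}$, solve, and read off that the GW estimation error is $\gtrsim \epsilon^{4} \asymp (n\wedge m)^{-2/((d_x\wedge d_y)\vee 4)}$ up to the $(\log(n\wedge m))^{-2/(d_x\wedge d_y)}$ factor that comes from the standard $\log N \asymp d\log(1/\epsilon)$ term in Fano's lemma. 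The $d_x\wedge d_y \le 4$ branch requires only the two-point Le Cam argument, which is simpler and gives the $(n\wedge m)^{-1/2}$ rate directly.
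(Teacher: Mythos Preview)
Your proposal contains the correct high-level ingredients --- Lemma~\ref{lem: GW}(ii) to pass from an isometry-invariant Wasserstein quantity to $\mathsf{GW}_{2,2}$, a construction ensuring $\lambda_{\min}(\Sigma)$ is bounded away from zero, a reduction of the $d_x\neq d_y$ case to the equal-dimension case, and a two-point argument for $d_x\wedge d_y\le 4$ --- and all of these do appear in the paper's proof. But there is a genuine gap in the central step.

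For Fano's method to apply to the \emph{functional} $D(\cdot,\nu)$, you need pairwise separation of the \emph{values} $D(\mu_\omega,\nu)$, i.e.\ $|D(\mu_\omega,\nu)-D(\mu_{\omega'},\nu)|\gtrsim \epsilon^4$ for $\omega\ne\omega'$ in your packing. You claim this follows from $\inf_{U\in E(d)}W_2(\mu_\omega,U_\#\mu_{\omega'})\gtrsim \epsilon$ via Lemma~\ref{lem: GW}(ii). It does not: the lemma only yields $\mathsf{GW}_{2,2}(\mu_\omega,\mu_{\omega'})\gtrsim \epsilon$, and the reverse triangle inequality gives $|\mathsf{GW}_{2,2}(\mu_\omega,\nu)-\mathsf{GW}_{2,2}(\mu_{\omega'},\nu)|\le \mathsf{GW}_{2,2}(\mu_\omega,\mu_{\omega'})$ --- the inequality goes the wrong direction. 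There is no reason the values $D(\mu_\omega,\nu)$ should be pairwise separated even if the $\mu_\omega$ are pairwise far in GW; indeed, by symmetry of typical bump constructions, many $D(\mu_\omega,\nu)$ will coincide exactly.

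The paper sidesteps this by abandoning Fano in favor of the \emph{fuzzy (composite) hypothesis} machinery from \cite{niles2022estimation}. Concretely, it takes $\nu=F_\#\mathfrak{u}$ and $\mu=F_\#\mathfrak{q}$ for a random bijection $F$ onto a well-separated point set (Lemma~\ref{eigenval}), so that $D(\mu,\nu)=\mathsf{GW}_{2,2}^2(F_\#\mathfrak{q},F_\#\mathfrak{u})$ \emph{is itself} a GW distance. Then Lemma~\ref{lem: GW}(ii) is used exactly once, to lower-bound this quantity by $k^{-1/d}d_{\mathrm{TV}}(\mathfrak{q},\mathfrak{u})$, and an upper bound via $W_2$ and Proposition~9 of \cite{niles2022estimation} gives the other side. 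The reduction is then to testing $\{\mathfrak{q}:d_{\mathrm{TV}}(\mathfrak{q},\mathfrak{u})\le\delta\}$ against $\{\mathfrak{q}:d_{\mathrm{TV}}(\mathfrak{q},\mathfrak{u})\ge 1/4\}$ on $[k]$, which requires only that $D$ be uniformly \emph{small} on one class and uniformly \emph{large} on the other --- not pairwise separation. Proposition~10 of \cite{niles2022estimation} finishes the job. If you want to repair your argument, the cleanest fix is to switch to this two-composite-hypothesis framework; your anchor idea for pinning down the isometry is then unnecessary, since the paper handles $\inf_{U\in E(d)}$ directly via a bijection-matching argument on the discrete support.
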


The proof builds on adapting techniques developed in \cite{niles2022estimation} (see also \cite{manole2024sharp}) and compares $\mathsf{GW}_{2,2}$ with the total variation and the product of the total variation and $\chi^2$-divergence for random mixtures of point masses; see (\ref{eq: sandwich}). 
The upper bound is straightforward from comparing $\mathsf{GW}_{2,2}$ with $W_2$ using Lemma \ref{lem: GW}. Deriving the lower comparison inequality requires more effort. 
First, to apply the second result in Lemma \ref{lem: GW}~(ii), one has to appropriately choose support points so that they are well-separated and at the same time the smallest eigenvalue of the covariance matrix of the empirical distribution is bounded away from zero; see Lemma \ref{eigenval}. Second, extra care is needed to bound $\mathsf{GW}_{2,2}$ from below by the total variation because of invariance of $\mathsf{GW}_{2,2}$ under isometries; see the argument above (\ref{eq: sandwich}). 

Theorem \ref{thm: lower bound} implies that the rate $\varphi_{n,m}$ is unimprovable (up to logarithmic factors) \textit{uniformly over} the class of distributions $\calP(B_{\calX}(0,1))\times \calP(B_{\calX}(0,1))$ (or any larger distributional class). However, this does not preclude faster rates for specific distributions. For example, when $\mu,\nu$ are both the uniform distribution on $[0,1]^2$, Lemma \ref{lem: GW} implies that $D(\hat{\mu}_n,\hat{\nu}_n) = \mathsf{GW}_{2,2}^2(\hat{\mu}_n,\hat{\nu}_n) \lesssim \mathsf{GW}_{2,1}^2(\hat{\mu}_n,\hat{\nu}_n) \lesssim W_2^2(\hat{\mu}_n,\hat{\nu}_n)$. Theorem 1.1 in \cite{ambrosio2019pde} shows
\[
\lim_{n \to \infty} \frac{n}{\log n} \E\big[W_2^2(\hat{\mu}_n,\hat{\nu}_n)\big] = \frac{1}{2\pi},
\]
which implies that $\E[D(\hat{\mu}_n,\hat{\nu}_n)] \lesssim \frac{\log n}{n}$. Exploring faster rates of GW for restricted classes of distributions is left for future research. 

\subsection{Deviation inequalities}
\label{sec: deviation}
In this section, we present deviation inequalities for the error of empirical GW. 
For the notation simplicity, we shall focus here on $m = n$. Define
\[
\Delta_n := \big| D(\hat{\mu}_n,\hat{\nu}_n) - D(\mu,\nu)\big|, \quad \varphi_{n} := \varphi_{n,n} = n^{-\frac{2}{(d_x \wedge d_y) \vee 4}} (\log n)^{\mathbbm{1}_{\{d_x \wedge d_y=4\}}},
\]
and $\bar{\varphi}_{n,q}$ by the right-hand side on (\ref{eq: upper}) with $m=n$. Our goal is to establish deviation inequalities for $\Delta_n$. We consider the following three cases for the marginals $\mu,\nu$; (i) they are compactly supported, (ii) they are sub-Weibull, and (iii) they have finite $(4q)$-th moments for some $q \in (2,\infty)$.
Recall that a real-valued random variable $\xi$ is called \textit{$\beta$-sub-Weibull} for some $\beta > 0$ if the Orlicz $\psi_\beta$-norm with $\psi_{\beta}(z):=e^{z^\beta}-1$ is finite, i.e.,
\[
\| \xi \|_{\psi_{\beta}} := \inf \left\{ C>0 : \E\big[e^{|\xi/C|^\beta}\big] \le 2 \right\} < \infty.
\]
See \cite{kuchibhotla2022moving} for sub-Weibull distributions.\footnote{For $\beta \in (0,1)$, $\| \cdot \|_{\psi_\beta}$ is only a quasinorm.} A $\beta$-sub-Weibull variable $\xi$ is often called sub-Gaussian if $\beta=2$ and sub-exponential if $\beta=1$. For a random vector $Z$, we call it $\beta$-sub-Weibull if $\| Z \|_{\psi_\beta} := \| \| Z \| \|_{\psi_{\beta}}$ is finite. 

\begin{theorem}[Deviation inequalities for empirical GW error]
\label{thm: deviation}
Let $r,\kappa, M\ge 1, \beta > 0$ and $q \in (2,\infty)$ be given.
\begin{enumerate}
    \item[(i)] If $\mu,\nu$ are supported in $B_{\calX}(0,r), B_{\calY}(0,r)$, respectively, then
    \[
    \Prob \Big ( \Delta_n \ge K r^4\big (\varphi_n + tn^{-1/2}\big) \Big ) \le 2e^{-t^2}, \quad \forall t > 0,
    \]
    where $K$ is a constant that depends only on $d_x$ and $d_y$.
    \item[(ii)] If $\mu,\nu$ are $\beta$-sub-Weibull with $\| X \|_{\psi_\beta} \vee \| Y \|_{\psi_\beta} \le M$ for $X \sim \mu$ and $Y \sim \nu$, then 
    \[
    \begin{split}
 &\Prob \Big ( \Delta_n \ge K \big(\varphi_n + n^{-1/2}\sqrt{\log n} + stn^{-1/2}+ tn^{-1/2}(\log n)^{4/\beta}\big) \Big ) \\
 &\qquad \le 2n^{-\kappa}e^{-s^{\beta/4}} + 2e^{-t^2}, \quad \forall s \ge 1, t > 0,
 \end{split}
    \]
    where $K$ is a constant that depends only on $d_x,d_y, \kappa, \beta$ and $M$.  
    \item[(iii)] If $\mathfrak{m}_{4q} (\mu) \vee \mathfrak{m}_{4q}(\nu) \le M$, then 
        \[
 \Prob \Big ( \Delta_n \ge K \big(\bar{\varphi}_{n,q}+ stn^{-\frac{1}{2}+\frac{1}{q}} + s^{-q+1}n^{1/q}\big) \Big ) \le 2s^{-q} + 2e^{-t^2}, \quad \forall s \ge 1, t > 0,
    \]
        where $K$ is a constant that depends only on $d_x,d_y,q$ and $M$.  
\end{enumerate}
\end{theorem}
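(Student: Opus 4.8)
The plan is to decompose $\Delta_n$ into two pieces, $\Delta_n \le \big|D(\hat\mu_n,\hat\nu_n) - \E[D(\hat\mu_n,\hat\nu_n)]\big| + \big|\E[D(\hat\mu_n,\hat\nu_n)] - D(\mu,\nu)\big|$, and control each separately. The second (bias) term is exactly what Theorem \ref{thm: upper bounds} (and its compact-support predecessor \eqref{eq: zhang} / Proposition \ref{prop: semibounded}) bounds: it is $\lesssim r^4\varphi_n$ in case (i), $\lesssim \varphi_n + n^{-1/2}\sqrt{\log n}$ in case (ii) after observing that $\beta$-sub-Weibull tails imply finite moments of all orders so $\mathfrak m_{4q}(\mu)\vee\mathfrak m_{4q}(\nu)$ is bounded for every $q$, and $\lesssim \bar\varphi_{n,q}$ in case (iii). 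So the real work is the concentration of $D(\hat\mu_n,\hat\nu_n)$ around its mean, and the three cases differ only in which version of a bounded-difference argument applies.

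For case (i) I would apply the classical bounded-difference (McDiarmid) inequality directly. The key structural input is a deterministic Lipschitz-type estimate: if we replace a single sample $X_i$ (resp. $Y_j$) by another point in $B_\calX(0,r)$, then $D(\hat\mu_n,\hat\nu_n)$ changes by at most $C r^4 / n$, with $C$ depending only on $d_x,d_y$. This follows from the variational representation in Lemma \ref{lem: variational} together with the fact that each cost $c_A$ with $\|A\|_{\op}$ bounded (as in the lemma, with the bound governed by $\mathfrak m_2 \le r^2$) satisfies $|c_A(x,y)| \lesssim r^4$ on $B_\calX(0,r)\times B_\calY(0,r)$, plus the elementary stability of the OT value $T_{c_A}$ under a single-point perturbation of an empirical measure (the change is at most $\tfrac1n \sup |c_A| \lesssim r^4/n$, and one takes inf/sup over $A$ preserving the bound); the $S_1$ part is handled identically since it is a sum of bounded U-statistic-type terms. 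Feeding bounded differences $c_i = d_i = C r^4/n$ into McDiarmid gives $\Prob(|D(\hat\mu_n,\hat\nu_n) - \E[\cdot]| \ge u) \le 2\exp(-u^2 n / (2C^2 r^8))$; setting $u = C r^4 t n^{-1/2}$ and combining with the bias bound yields the stated inequality after adjusting the constant $K$.

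For cases (ii) and (iii) the bounded-difference constant above degrades to $\lesssim (\|x\|^4 + \|x_i'\|^4 + \cdots)/n$, which is unbounded, so I would invoke the variant of McDiarmid's inequality from \cite{combes2024extension} that requires the bounded-difference condition to hold only on a high-probability event. Concretely, let $\mathcal E$ be the event $\{\max_i \|X_i\| \vee \max_j \|Y_j\| \le \rho_n\}$; on $\mathcal E$ the single-point perturbation bound holds with constant $\lesssim \rho_n^4/n$ (one must also use that an optimal $A$ for the perturbed measure still satisfies an $\op$-norm bound of order $\rho_n^2$, so the effective cost is $\lesssim \rho_n^4$). In the sub-Weibull case (ii), a union bound gives $\Prob(\mathcal E^c) \lesssim n e^{-(\rho_n/M)^\beta}$; choosing $\rho_n \asymp M(\log n)^{1/\beta}$ (times an extra factor $s^{1/\beta}$ to get the tail in $s$) makes the complement probability $\lesssim n^{-\kappa} e^{-s^{\beta/4}}$ and the bounded-difference constant $\asymp M^4 (\log n)^{4/\beta}$, so the Combes--McDiarmid inequality produces the term $t n^{-1/2}(\log n)^{4/\beta}$ (with the $stn^{-1/2}$ term absorbing the $s$-dependence); adding the bias bound $\varphi_n + n^{-1/2}\sqrt{\log n}$ finishes it. In the polynomial-moment case (iii), the truncation level is chosen as $\rho_n \asymp s\, n^{1/(4q)}$ so that, by Markov applied to $\mathfrak m_{4q}$, $\Prob(\mathcal E^c) \lesssim n\cdot (\rho_n)^{-4q}\lesssim s^{-q}$ wait—more carefully $\Prob(\max_i\|X_i\|>\rho_n)\le n M \rho_n^{-4q}$, so $\rho_n \asymp (nM)^{1/(4q)} s^{1/4}$ gives $\Prob(\mathcal E^c)\lesssim s^{-q}$; the bounded-difference constant is then $\asymp \rho_n^4/n \asymp s\, n^{1/q - 1}$ up to constants, yielding the deviation term $st n^{-1/2+1/q}$, while the uncontrolled contribution on $\mathcal E^c$ must be crudely bounded — here $|D(\hat\mu_n,\hat\nu_n)|$ on $\mathcal E^c$ is controlled in expectation by a further moment truncation argument giving the residual $s^{-q+1}n^{1/q}$ term. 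Combining with $\bar\varphi_{n,q}$ gives the claim.

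The main obstacle I anticipate is the verification, in cases (ii)--(iii), that the bounded-difference constant is genuinely of order $\rho_n^4/n$ \emph{uniformly over the relevant $A$'s} on the event $\mathcal E$ — i.e., that perturbing one sample does not force the optimizing $A$ in the variational representation to jump to a much larger operator norm. This requires re-deriving the $\op$-norm bound $\|A^\star\|_{\op}\le \tfrac12\sqrt{\mathfrak m_2(\hat\mu_n)\mathfrak m_2(\hat\nu_n)}$ from Lemma \ref{lem: variational} for the perturbed empirical measures and checking it stays $\lesssim \rho_n^2$ on $\mathcal E$; the secondary technical nuisance is bookkeeping the $S_1$ versus $S_2$ split and the centering step $D(\mu,\nu) = S_1(\bar\mu,\bar\nu)+S_2(\bar\mu,\bar\nu)$, since the empirical centering $\hat X - \overline{X}_n$ also perturbs under a single-sample change, but this contributes only lower-order terms. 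Everything else is routine given Theorems \ref{thm: upper bounds} and the cited McDiarmid variants.
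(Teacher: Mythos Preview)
Your proposal is correct and follows essentially the same route as the paper: verify a bounded-difference estimate $O(r^4/n)$ for a single-sample perturbation via the variational representation (Lemma~\ref{lem: variational}), apply McDiarmid for case~(i), and apply the Combes variant (Lemma~\ref{lem: mcdiarmid}) on the truncation event $\{\max_i\|X_i\|\vee\max_j\|Y_j\|\le r\}$ for cases~(ii)--(iii). The paper applies the McDiarmid variant directly to the nonnegative function $\Delta_n$ (more precisely to $|S_1(\tilde\mu_n,\tilde\nu_n)-S_1(\mu,\nu)|+|S_2(\tilde\mu_n,\tilde\nu_n)-S_2(\mu,\nu)|$) rather than splitting into fluctuation-plus-bias, but this is cosmetic.

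Two small fixes. In case~(ii) your multiplicative choice $\rho_n\asymp(\log n)^{1/\beta}s^{1/\beta}$ does not reproduce the tail $n^{-\kappa}e^{-s^{\beta/4}}$; the paper takes the \emph{additive} radius $r\asymp(\log n)^{1/\beta}+s^{1/4}$, so that $r^4\asymp(\log n)^{4/\beta}+s$ splits into the two deviation terms $tn^{-1/2}(\log n)^{4/\beta}$ and $stn^{-1/2}$. In case~(iii) the residual $s^{-q+1}n^{1/q}$ is not a separate crude bound on $|D(\hat\mu_n,\hat\nu_n)|\mathbbm{1}_{\mathcal E^c}$; it is precisely the $\mathfrak p\sum_i c_i$ correction already built into Lemma~\ref{lem: mcdiarmid}, namely $\mathfrak p_r\cdot 2n\cdot(Cr^4/n)\asymp s^{-q}\cdot s\,n^{1/q}$ with $r\asymp s^{1/4}n^{1/(4q)}$.
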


The theorem implies that when $d_x \wedge d_y > 4$, $\Delta_n$ can be bounded by a constant multiple of $\varphi_n$ with high probability, provided $q$ is large enough in Case (iii). Suppose $d_x \wedge d_y > 4$. 

\begin{enumerate}
\item[(i)] When $\mu,\nu$ are compactly supported, $\Delta_n \lesssim n^{-2/(d_x \wedge d_y)}$ with probability at least, say, $1-n^{-10}$.
This follows by choosing $t = K'\sqrt{\log n}$ for a sufficiently large  $K'$.\footnote{We used $n \ge 2$ to eliminate the constant factor in front of $n^{-10}$. }
\item[(ii)] Likewise, when $\mu,\nu$ are sub-Weibull, $\Delta_n \lesssim n^{-2/(d_x \wedge d_y)}$ with probability at least  $1-n^{-10}$. This follows by choosing $s=K'(\log n)^{4/\beta}$ and $t=K'\sqrt{\log n}$ for a sufficiently large  $K'$.
\item[(iii)] Suppose $\mu,\nu$ have finite $(4q)$-th moments for some $q$ large enough that $\bar{\varphi}_{n,q} \lesssim n^{-2/(d_x \wedge d_y)}$. Choosing $s = n^{1/(2q)}$ and $t = K'\sqrt{\log n}$ for a suitable constant $K'$ yields $stn^{-\frac{1}{2}+\frac{1}{q}} + s^{-q+1}n^{1/q} \lesssim n^{-\frac{1}{2} + \frac{3}{2q}} \sqrt{\log n}\lesssim n^{-2/(d_x \wedge d_y)}$, provided $q$ is large enough. As such, we have $\Delta_n \lesssim n^{-2/(d_x \wedge d_y)}$ with probability at least  $1-O(n^{-1/2})$. 
\end{enumerate}
In particular, in Cases (i) and (ii) above, by the Borel-Cantelli lemma, we have
\[
\limsup_{n \to \infty}n^{2/(d_x \wedge d_y)} \big| D(\hat{\mu}_n,\hat{\nu}_n) - D(\mu,\nu)\big| \le K \quad \text{almost surely}
\]
for a suitable constant $K$. 

The proof for the compactly supported case follows from applying McDiarmid's inequality \cite{mcdiarmid1989method}. When the supports are unbounded, however, the bounded difference condition does not hold, and McDiarmid's inequality is not directly applicable. To overcome this, we will use the following version of McDiarmid's inequality, due essentially to \cite{combes2024extension}, tailored to our use case, which only requires the bounded difference condition to hold on a high-probability event. 

\begin{lemma}[A version of McDiarmid's inequality]
\label{lem: mcdiarmid}
Let $Z_1,\dots,Z_N$ be independent random variables with each $Z_i$ taking values in a measurable space $\mathcal{Z}_i$. Let $\mathfrak{f}: \prod_{i=1}^N \mathcal{Z}_i \to \R_+$ be a nonnegative measurable function, for which there exist a measurable subset $\mathcal{W} \subset \prod_{i=1}^N \mathcal{Z}_i$ and nonnegative constants $\mathsf{c}_1,\dots,\mathsf{c}_N$ such that
\[
|\mathfrak{f}(z) - \mathfrak{f}(z')| \le \sum_{i=1}^N \mathsf{c}_i \mathbbm{1}_{\{ z_i \ne z_i' \}}, \quad z,z' \in \mathcal{W}.
\]
Let $Z=(Z_1,\dots,Z_N)$.
Assume $\E[\mathfrak{f}(Z)]$ is finite and $\mathfrak{p} := \Prob (Z \notin \mathcal{W}) \le \frac{1}{2}$. Then, we have 
\[
\Prob \left( \mathfrak{f}(Z) \ge 2\E[\mathfrak{f}(Z)] + t\sqrt{\sum_{i-1}^N\mathsf{c}_i^2/2} + \mathfrak{p}\sum_{i=1}^N\mathfrak{c}_i \right) \le \mathfrak{p} + e^{-t^2}, \quad t > 0.
\]
\end{lemma}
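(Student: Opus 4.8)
The plan is to reduce this to the standard McDiarmid bounded-difference inequality by a conditioning/truncation argument. The obstruction is that the bounded difference property only holds on $\mathcal{W}$, so $\mathfrak{f}$ itself need not satisfy it globally. The natural fix, following \cite{combes2024extension}, is to replace $\mathfrak{f}$ by a surrogate that agrees with it on $\mathcal{W}$ and is globally Lipschitz in the Hamming sense. Concretely, I would set
\[
\mathfrak{g}(z) := \inf_{w \in \mathcal{W}} \Big\{ \mathfrak{f}(w) + \sum_{i=1}^N \mathsf{c}_i \mathbbm{1}_{\{ z_i \ne w_i \}} \Big\},
\]
the largest function dominated by $\mathfrak{f}$ on $\mathcal{W}$ that is $1$-Lipschitz with respect to the weighted Hamming metric $\rho(z,z') = \sum_i \mathsf{c}_i \mathbbm{1}_{\{z_i \ne z_i'\}}$. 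First I would check: (a) $\mathfrak{g}$ is globally $\rho$-Lipschitz, i.e. $|\mathfrak{g}(z)-\mathfrak{g}(z')| \le \rho(z,z') = \sum_i \mathsf{c}_i \mathbbm{1}_{\{z_i \ne z_i'\}}$ (immediate from the infimum definition and triangle inequality for $\rho$); (b) $\mathfrak{g}(z) = \mathfrak{f}(z)$ for $z \in \mathcal{W}$ (the inequality $\mathfrak{g} \le \mathfrak{f}$ on $\mathcal{W}$ is by taking $w=z$; the reverse uses exactly the hypothesis $\mathfrak{f}(z) \le \mathfrak{f}(w) + \rho(z,w)$ for $z,w \in \mathcal{W}$); (c) $0 \le \mathfrak{g} \le \mathfrak{f}$ everywhere — the lower bound needs a word: since $\mathfrak{f} \ge 0$ and the Hamming increments are nonnegative, $\mathfrak{g} \ge 0$; and $\mathfrak{g}(z) \le \mathfrak{f}(z')+\rho(z,z')$ for any fixed reference $z' \in \mathcal{W}$, but to get $\mathfrak{g} \le \mathfrak{f}$ globally we only need it on $\mathcal{W}$, which is (b) — off $\mathcal{W}$ we do not need $\mathfrak{g}\le \mathfrak{f}$, we just need $\mathfrak{g}$ measurable and integrable, which follows since $0 \le \mathfrak{g}(z) \le \mathfrak{f}(z_0) + \sum_i \mathsf{c}_i$ for any fixed $z_0\in\mathcal{W}$ (assuming $\mathcal{W}\neq\emptyset$; if $\mathfrak{p}\le 1/2<1$ then $\mathcal{W}$ has positive measure, hence is nonempty).

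Next I would apply ordinary McDiarmid to $\mathfrak{g}$: since $\mathfrak{g}$ satisfies the bounded difference condition globally with constants $\mathsf{c}_i$, for all $t>0$,
\[
\Prob\Big( \mathfrak{g}(Z) \ge \E[\mathfrak{g}(Z)] + t\sqrt{\tfrac{1}{2}\textstyle\sum_{i=1}^N \mathsf{c}_i^2}\,\Big) \le e^{-t^2}.
\]
Then I would transfer this to $\mathfrak{f}$ using the two facts: on the event $\{Z \in \mathcal{W}\}$ we have $\mathfrak{f}(Z) = \mathfrak{g}(Z)$, and $\Prob(Z \notin \mathcal{W}) = \mathfrak{p}$. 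For the expectation comparison, write $\E[\mathfrak{f}(Z)] = \E[\mathfrak{g}(Z)\mathbbm{1}_{\{Z\in\mathcal{W}\}}] + \E[\mathfrak{f}(Z)\mathbbm{1}_{\{Z\notin\mathcal{W}\}}] \le \E[\mathfrak{g}(Z)] + \E[\mathfrak{f}(Z)\mathbbm{1}_{\{Z\notin\mathcal{W}\}}]$, so $\E[\mathfrak{g}(Z)] \le \E[\mathfrak{f}(Z)]$; conversely I need a bound of the form $\E[\mathfrak{g}(Z)] \ge$ something, but actually for the final statement I only need $\E[\mathfrak{g}(Z)] \le \E[\mathfrak{f}(Z)] \le 2\E[\mathfrak{f}(Z)]$ in the deviation threshold — wait, I also must absorb the $\mathfrak{p}\sum_i \mathsf{c}_i$ term, which comes from controlling $\E[\mathfrak{g}(Z)]$ from below by $\E[\mathfrak{f}(Z)]$ minus the contribution where $Z\notin\mathcal W$. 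Precisely: on $\{Z\notin\mathcal W\}$, $0\le \mathfrak{g}(Z)\le \mathfrak{f}(z_0)+\sum_i\mathsf{c}_i$ is not quite what I want; instead use $\mathfrak{g}(Z) \ge \mathfrak{f}(Z) - \big(\mathfrak{f}(Z)-\mathfrak{g}(Z)\big)$ and on $\{Z\in\mathcal{W}\}$ this difference is $0$. So $\E[\mathfrak{f}(Z)] - \E[\mathfrak{g}(Z)] = \E[(\mathfrak{f}(Z)-\mathfrak{g}(Z))\mathbbm{1}_{\{Z\notin\mathcal{W}\}}] \le \E[\mathfrak{f}(Z)\mathbbm{1}_{\{Z\notin\mathcal W\}}]$; this last quantity is \emph{not} obviously $\le \mathfrak{p}\sum_i\mathsf{c}_i$ without more structure on $\mathfrak{f}$.

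Here is where I expect the main obstacle, and I would resolve it as in \cite{combes2024extension}: the clean route is to \emph{not} compare $\E[\mathfrak{f}]$ and $\E[\mathfrak{g}]$ directly, but to run McDiarmid on $\mathfrak{g}$ centered at \emph{its own} mean and then note $\E[\mathfrak{g}(Z)] \le \E[\mathfrak{f}(Z)]$, while bounding the threshold using $\E[\mathfrak{g}(Z)] \le 2\E[\mathfrak{f}(Z)]$ requires nothing extra. The term $\mathfrak{p}\sum_i \mathsf{c}_i$ enters only to handle the reverse — in fact the statement asks to bound $\mathfrak{f}(Z)$, and we have $\mathfrak{f}(Z) = \mathfrak{g}(Z)$ on $\mathcal W$, so on that event $\mathfrak{f}(Z) < \E[\mathfrak{g}(Z)] + t\sqrt{\sum \mathsf{c}_i^2/2}$ off the McDiarmid bad set; since $\E[\mathfrak{g}(Z)] \le \E[\mathfrak{f}(Z)] \le 2\E[\mathfrak{f}(Z)]$, the $\mathfrak{p}\sum_i\mathsf{c}_i$ slack in the theorem's threshold is more than enough to absorb the difference $\E[\mathfrak{f}(Z)]-\E[\mathfrak{g}(Z)]$ once we observe $\E[\mathfrak{f}(Z)]-\E[\mathfrak{g}(Z)] = \E[(\mathfrak{f}-\mathfrak{g})\mathbbm{1}_{\{Z\notin\mathcal W\}}]$ and that $\mathfrak{g}(Z)\ge 0$ forces $\mathfrak{f}-\mathfrak{g}\le \mathfrak{f}$, combined with — and this is the point I would pin down carefully — the alternative surrogate $\tilde{\mathfrak{g}}(z) := \min\{\mathfrak{g}(z), \sup_{w\in\mathcal W}\mathfrak{f}(w)\}$ or a truncation at level $\E[\mathfrak{f}(Z)] + \sum_i\mathsf{c}_i$, which keeps the Lipschitz property, still equals $\mathfrak{f}$ on $\mathcal W\cap\{\mathfrak f \le \text{level}\}$, and makes $\E[\tilde{\mathfrak g}] \ge \E[\mathfrak f] - \mathfrak p\cdot(\text{level})$ tractable. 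I would then assemble: union bound over the McDiarmid bad event (probability $\le e^{-t^2}$) and $\{Z\notin\mathcal W\}$ (probability $\mathfrak p$), giving the claimed $\mathfrak{p} + e^{-t^2}$. The routine calculations are the measurability of $\mathfrak{g}$ (via separability reductions, or simply noting the infimum can be taken over a countable dense-in-measure subset of $\mathcal W$ after a standard reduction) and the bookkeeping of constants, which I would not belabor.
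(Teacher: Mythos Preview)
Your Lipschitz-extension-plus-McDiarmid strategy is exactly the right one and is essentially what Combes proves; the paper itself does not reprove this but simply cites Combes's Proposition~2 (which gives the inequality with $\E[\mathfrak{f}(Z)\mid Z\in\mathcal{W}]$ in place of $2\E[\mathfrak{f}(Z)]$) and then observes that $\E[\mathfrak{f}(Z)\mid Z\in\mathcal{W}]=\frac{1}{1-\mathfrak{p}}\E[\mathfrak{f}(Z)\mathbbm{1}_{\{Z\in\mathcal{W}\}}]\le 2\E[\mathfrak{f}(Z)]$ since $\mathfrak{p}\le\frac12$.

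That said, your execution has a real gap. After applying McDiarmid to $\mathfrak{g}$ you need the \emph{upper} bound $\E[\mathfrak{g}(Z)]\le 2\E[\mathfrak{f}(Z)]+\mathfrak{p}\sum_i\mathsf{c}_i$. You instead assert $\E[\mathfrak{g}(Z)]\le\E[\mathfrak{f}(Z)]$, which is false in general: off $\mathcal{W}$ the extension $\mathfrak{g}$ can exceed $\mathfrak{f}$ (e.g.\ if $\mathfrak{f}$ happens to vanish there). All of your subsequent manipulations go the wrong way---you keep producing \emph{lower} bounds on $\E[\mathfrak{g}(Z)]$ (such as $\E[\mathfrak{g}(Z)]\ge\E[\mathfrak{f}(Z)\mathbbm{1}_{\mathcal{W}}]$), and the truncation $\tilde{\mathfrak{g}}$ you float is aimed at the same wrong direction.

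The fix is the very bound you wrote down and then dismissed as ``not quite what I want'': on $\{Z\notin\mathcal{W}\}$ one has $\mathfrak{g}(Z)\le\inf_{w\in\mathcal{W}}\mathfrak{f}(w)+\sum_i\mathsf{c}_i$ directly from the definition of $\mathfrak{g}$. Writing $b=\sum_i\mathsf{c}_i$, this gives
\[
\E[\mathfrak{g}(Z)]\le\E\big[\mathfrak{f}(Z)\mathbbm{1}_{\{Z\in\mathcal{W}\}}\big]+\mathfrak{p}\big(\inf_{\mathcal{W}}\mathfrak{f}+b\big)
\le(1-\mathfrak{p})\E[\mathfrak{f}(Z)\mid Z\in\mathcal{W}]+\mathfrak{p}\,\E[\mathfrak{f}(Z)\mid Z\in\mathcal{W}]+\mathfrak{p}b,
\]
i.e.\ $\E[\mathfrak{g}(Z)]\le\E[\mathfrak{f}(Z)\mid Z\in\mathcal{W}]+\mathfrak{p}b\le 2\E[\mathfrak{f}(Z)]+\mathfrak{p}b$. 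With this in hand your argument closes: on $\{Z\in\mathcal{W}\}$ one has $\mathfrak{f}(Z)=\mathfrak{g}(Z)<\E[\mathfrak{g}(Z)]+t\sqrt{\sum_i\mathsf{c}_i^2/2}$ off the McDiarmid exceptional set, and the union bound with $\{Z\notin\mathcal{W}\}$ yields the claim.
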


The original formulation of Proposition 2 in \cite{combes2024extension} involves the conditional expectation $\E[\mathfrak{f}(Z) \mid Z \in \mathcal{W}]$ in place of $2\E[\mathfrak{f}(Z)]$. Nonnegativity of $\mathfrak{f}$ and the assumption that $\mathfrak{p} \le \frac{1}{2}$ allows to replace the conditional mean $\E[\mathfrak{f}(Z) \mid Z \in \mathcal{W}]$ with $2\E[\mathfrak{f}(Z)]$, which is more convenient for our purpose. 

\section{Proofs}
\label{sec: proofs}

\subsection{Proof of Theorem \ref{thm: upper bounds}}

We will use the following observation without further mentioning: $\mathfrak{m}_p (\mu) \le 1+\mathfrak{m}_{p'}(\mu)$ for any $p < p'$. 

We first prove the following auxiliary lemma.

\begin{lemma}
\label{lem: TA}
Let $q \in (2,\infty)$ and $R, M \ge 1$ be given. Then,
\[ 
\sup_{\substack{(\mu,\nu):~\mathfrak{m}_{4q}(\mu)\vee \mathfrak{m}_{4q}(\nu) \le M \\ \| A \|_{\op} \le R}} \E\left [ \big|T_{c_{A}}(\hat{\mu}_n,\hat{\nu}_m)-T_{c_{A}}(\mu,\nu)\big| \right ] \lesssim \varphi_{n,m},
\]
where the hidden constant depends only on $d_x,d_y,q,M$ and $R$. 
\end{lemma}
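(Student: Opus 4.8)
\textbf{Proof plan for Lemma \ref{lem: TA}.}

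The plan is to reduce the problem, for a single fixed cost $c_A$, to the two-sample OT cost estimation result of \cite{staudt2025convergence} (recorded later in the excerpt as Theorem \ref{thm: simplified}). The key structural observation is that the cost $c_A(x,y) = -4\|x\|^2\|y\|^2 - 32 x^\top A y$ is, up to an additive tensor-sum term and a rescaling, a \emph{quadratic} cost. Indeed, I would first write $-4\|x\|^2\|y\|^2 = -2\|x\|^4 - 2\|y\|^4 + 2(\|x\|^2-\|y\|^2)^2$, which exhibits the $\|x\|^2\|y\|^2$ term as a tensor sum $-2\|x\|^4 \oplus (-2\|y\|^4)$ plus something that only depends on the pair through $\|x\|^2 - \|y\|^2$; more usefully, I would instead recognize that $c_A$ differs from a multiple of the squared-distance-type cost associated with the lifted points $\Phi(x) = (\sqrt{2}\,x, \|x\|^2)$ and $\Psi(y) = (\sqrt{2}\,y, \|y\|^2)$ only by a tensor sum. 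Concretely, for an appropriate block structure, $\|x\|^4 + \|y\|^4 - 2\|x\|^2\|y\|^2 - \text{(linear in }x^\top A y) $ can be matched to $\|\tilde\Phi(x) - \tilde\Psi(y)\|^2$ for lifted/linearly-transformed data, so that $T_{c_A}(\mu,\nu) = -2\mathfrak{m}_4(\mu) - 2\mathfrak{m}_4(\nu) + \text{const} + \text{(a $W_2^2$-type cost between pushforwards)}$. The upshot is
\[
T_{c_A}(\hat\mu_n,\hat\nu_m) - T_{c_A}(\mu,\nu) = \big(\text{empirical moment fluctuations}\big) + \big(W_2^2\text{-type cost difference between pushforward measures}\big).
\]

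Next I would handle the two pieces separately. The moment-fluctuation piece is of the form $-2\big(\mathfrak{m}_4(\hat\mu_n) - \mathfrak{m}_4(\mu)\big) - 2\big(\mathfrak{m}_4(\hat\nu_m) - \mathfrak{m}_4(\nu)\big)$, i.e.\ an average of i.i.d.\ terms $\|X_i\|^4$, $\|Y_j\|^4$; since $\mathfrak{m}_{4q}(\mu) \le M$ with $q > 2$ these terms have more than a second moment, so a routine $L^1$ (or variance) bound gives $\lesssim (n\wedge m)^{-1/2} \lesssim \varphi_{n,m}$, with constant depending on $M, q$. For the $W_2^2$-type piece, I would apply Theorem \ref{thm: simplified}: the pushforward measures $\tilde\Phi_\#\mu$ and $\tilde\Psi_\#\nu$ live in $\R^{d_x+1}$ and $\R^{d_y+1}$ respectively, but the LCA-type bound in \cite{staudt2025convergence} for $T_c$-cost estimation gives the rate governed by $(d_x \wedge d_y)$ (not $(d_x+1)\wedge(d_y+1)$, since lifting by one coordinate that is a deterministic function of the others does not increase the relevant intrinsic complexity — this is exactly the mechanism \cite{zhang2024gromov} used in the compact case), and the required finite-moment hypothesis of that theorem is met because $\|\tilde\Phi(x)\| \lesssim 1 + \|x\|^2$, so $\mathfrak{m}_{4q}(\mu) \le M$ translates into a finite $(2q)$-th moment for $\tilde\Phi_\#\mu$ with $2q > 4$, which is what Theorem \ref{thm: simplified} needs to deliver the rate $\varphi_{n,m}$ without extra tail assumptions. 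The dependence of the cost on $A$ only enters through a bounded linear term $-32 x^\top A y$ with $\|A\|_{\op} \le R$; this rescales the lifted map $\tilde\Psi$ by a factor controlled by $R$, so the hidden constant picks up polynomial dependence on $R$ but nothing worse, and — crucially — the bound is \emph{uniform} over $\|A\|_{\op}\le R$ because Theorem \ref{thm: simplified}'s constant depends on the marginals only through their moments, which are uniformly bounded here.

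The step I expect to be the main obstacle is making the reduction to a genuine $W_2^2$-type (squared-Euclidean) cost clean enough to invoke \cite{staudt2025convergence} verbatim: one has to absorb the indefinite term $-4\|x\|^2\|y\|^2$ and the cross term $-32 x^\top A y$ into a single cost of the form $\|\text{(affine image of lift of }x) - \text{(affine image of lift of }y)\|^2$ plus a tensor sum, and verify that the affine images do not distort the moment bounds or the intrinsic dimension. A secondary technical point is checking that Theorem \ref{thm: simplified} (or its cited form) indeed yields the $(d_x\wedge d_y)$-governed rate for pushforwards under these specific lifts rather than the ambient-dimension rate; this should follow because the lift $x \mapsto (\sqrt2 x, \|x\|^2)$ has image a $d_x$-dimensional graph (a Lipschitz-image of $\R^{d_x}$ on bounded sets, or handled via the moment truncation already built into \cite{staudt2025convergence}), so the covering/entropy estimates driving the $\varphi_{n,m}$ rate are unaffected. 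Once these structural facts are in place, the remaining estimates are routine, and combining the two pieces via the triangle inequality gives the claimed bound.
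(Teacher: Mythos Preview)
Your overall strategy---add a tensor-sum term to $c_A$ to obtain a nonnegative cost, then invoke Theorem~\ref{thm: simplified}---is the same as the paper's. The paper simply writes the shifted cost as $\bar c_A(x,y)=c_A(x,y)+2\|x\|^4+16R\|x\|^2+2\|y\|^4+16R\|y\|^2$, controls the tensor-sum fluctuations by $(n\wedge m)^{-1/2}$, and applies Theorem~\ref{thm: simplified} to $\bar c_A$. Your ``lifting'' is a cosmetic repackaging of exactly this shift: writing $\|\tilde\Phi(x)-\tilde\Psi_A(y)\|^2$ just recovers $c_A$ plus a particular tensor sum.

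The gap is in how you use Theorem~\ref{thm: simplified}. That theorem is a bootstrap: it \emph{assumes} the compact-support rate (Condition~(\ref{eq: BC})) and then extends it under moments. It does not itself produce the LCA rate $\varphi_{n,m}$---you must supply it. The paper does this via Lemma~\ref{lem: bounded rate}, which is the compact-case rate for $T_{c_A}$ imported from \cite{zhang2024gromov}, and then checks that $\bar c_\calX^{-1}([0,r])$ is a ball of radius $\sim r^{1/4}$ so that the $r^4$-scaling in Lemma~\ref{lem: bounded rate} becomes the linear-in-$r$ scaling required in~(\ref{eq: BC}). You instead assert that the rate ``should follow'' because the lift has a $d_x$-dimensional graph, labeling this a ``secondary technical point.'' It is not secondary: it is the entire content, and your sketch via Lipschitz images is circuitous. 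The cleanest verification is to observe that your lifted $W_2^2$ cost, pulled back to $\calX\times\calY$, is $c_A$ plus a tensor sum---so Condition~(\ref{eq: BC}) for it is \emph{equivalent} to Lemma~\ref{lem: bounded rate}, and the lifting bought nothing. (Your remark that the lifts land in $\R^{d_x+1}$ and $\R^{d_y+1}$ ``respectively'' also needs fixing: for a squared-distance cost you need both in the same space, e.g.\ $\tilde\Psi_A(y)=(4Ay,\sqrt{2}\,\|y\|^2)\in\R^{d_x+1}$.)
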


The proof relies on a version of Theorem 1.1 in \cite{staudt2025convergence} stated as Theorem \ref{thm: simplified} below. To apply the preceding theorem, we need the following result for compactly supported marginals.

\begin{lemma}
\label{lem: bounded rate}
Let $R \ge 1$ be given. Then, there exists a constant $\kappa > 0$ that depends only on $d_x, d_y$ and $R$ for which the following holds: 
\begin{equation}
\sup_{\substack{(\mu,\nu) \in \calP(B_{\calX}(0,r)) \times \calP(B_{\calY}(0,r)) \\ \|A\|_{\op} \le R}}\E\left [ \big|T_{c_{A}}(\hat{\mu}_n,\hat{\nu}_m)-T_{c_A}(\mu,\nu)\big| \right ] \\
\le \kappa r^4 \varphi_{n,m}
\label{eq: compact rate 2}
\end{equation}
for all $r \ge 1$ and $n,m \ge 2$.
\end{lemma}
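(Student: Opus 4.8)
The plan is to follow the strategy behind the proof of Theorem~4.2 in \cite{zhang2024gromov}: Lemma~\ref{lem: bounded rate} is essentially a mild restatement of an intermediate step there, made uniform over compactly supported $(\mu,\nu)$ and over $\|A\|_{\op}\le R$, with the dependence on the radius $r$ tracked explicitly; throughout, ``$\lesssim$'' hides constants depending only on $d_x,d_y$ and $R$.

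\emph{Step 1 (reduction to two one-sample empirical processes via duality).} Fix $A$ with $\|A\|_{\op}\le R$ and $(\mu,\nu)\in\calP(B_{\calX}(0,r))\times\calP(B_{\calY}(0,r))$, and write $c=c_A$. Since $c$ is continuous and bounded on the compact product of the supports, the strong duality of Section~\ref{sec: preliminaries} applies, and one may choose $c$-concave dual potentials of the form $(f,f^{c})$ for both $(\mu,\nu)$ and $(\hat{\mu}_n,\hat{\nu}_m)$, the empirical measures being supported in $B_{\calX}(0,r)$ and $B_{\calY}(0,r)$. Plugging the population (resp.\ empirical) dual pair as a feasible dual pair into the empirical (resp.\ population) primal problem and subtracting yields the elementary sandwich
\[
\abs{T_{c}(\hat{\mu}_n,\hat{\nu}_m)-T_{c}(\mu,\nu)}
\le \sup_{f\in\calF_r}\abs{\int f\,d(\hat{\mu}_n-\mu)} + \sup_{g\in\calG_r}\abs{\int g\,d(\hat{\nu}_m-\nu)},
\]
for any fixed classes $\calF_r$ (resp.\ $\calG_r$) containing all the $\calX$-side (resp.\ $\calY$-side) potentials that can occur. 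The right-hand side no longer depends on $A$, which handles the supremum over $\|A\|_{\op}\le R$ once $\calF_r,\calG_r$ are fixed.

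\emph{Step 2 (the potential classes).} A direct computation on $B_{\calX}(0,r)\times B_{\calY}(0,r)$ with $r\ge1$ gives $\abs{c_A(x,y)}\le(4+32R)r^4$ and $\nabla_x c_A(x,y)=-8\norm{y}^2x-32Ay$, so $x\mapsto c_A(x,y)$ is concave and $(8+32R)r^3$-Lipschitz uniformly in $y$, and symmetrically in $y$. Hence any $c_A$-concave $f$, being an infimum over $y$ of functions that are concave and $(8+32R)r^3$-Lipschitz in $x$, is itself concave and $(8+32R)r^3$-Lipschitz; after the harmless normalization $\max_{B_{\calX}(0,r)}f=0$ it is bounded in absolute value by a constant multiple of $r^4$, and then $g=f^{c_A}$ is likewise concave, $(8+32R)r^3$-Lipschitz, and bounded by a constant multiple of $r^4$. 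We therefore take $\calF_r$ to be the class of concave functions on $B_{\calX}(0,r)$ bounded by $C_R r^4$ and $C_R r^3$-Lipschitz, and $\calG_r$ the analogous class on $B_{\calY}(0,r)$.

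\emph{Step 3 (metric entropy, lower complexity adaptation, chaining).} Assume without loss of generality $d_x\le d_y$. For the first process, a Bronshtein-type metric entropy bound for uniformly bounded, uniformly Lipschitz concave functions on a ball (the same estimate used in \cite{hundrieser2024empirical,zhang2024gromov}) gives $\log N(\epsilon,\calF_r,\norm{\cdot}_\infty)\lesssim (r^4/\epsilon)^{d_x/2}$, and Dudley's entropy integral with the standard truncation yields $\E\sup_{f\in\calF_r}\abs{\int f\,d(\hat{\mu}_n-\mu)}\lesssim r^4\,n^{-2/(d_x\vee4)}(\log n)^{\mathbbm{1}_{\{d_x=4\}}}\lesssim r^4\varphi_{n,m}$. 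For the second process the naive bound would involve $m^{-2/(d_y\vee4)}$, which is generally larger than $\varphi_{n,m}$ when $d_x<d_y$; this is where the lower complexity adaptation principle enters. Since every relevant $\calY$-side potential has the form $g=f^{c_A}$ with $f$ a function on the \emph{lower}-dimensional space $\calX$, one replaces $\calG_r$ by $\calG_r':=\{f^{c_A}: f\in\calF_r,\ \|A\|_{\op}\le R\}$. The $c$-transform is a contraction in sup norm in its function argument, $\norm{f_1^{c_A}-f_2^{c_A}}_\infty\le\norm{f_1-f_2}_\infty$, and $\norm{f^{c_{A_1}}-f^{c_{A_2}}}_\infty\le 32 r^2\norm{A_1-A_2}_{\op}$; combining a sup-norm net of $\calF_r$ with an operator-norm net of $\{\|A\|_{\op}\le R\}\subset\R^{d_x\times d_y}$ gives $\log N(\epsilon,\calG_r',\norm{\cdot}_\infty)\lesssim (r^4/\epsilon)^{d_x/2}+d_xd_y\log(C r^2R/\epsilon)$, whose second term is of lower order (it integrates to a quantity $\lesssim r^4$ and contributes no extra power of $\log(n\wedge m)$). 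Chaining then gives $\E\sup_{g\in\calG_r'}\abs{\int g\,d(\hat{\nu}_m-\nu)}\lesssim r^4\,m^{-2/(d_x\vee4)}(\log m)^{\mathbbm{1}_{\{d_x=4\}}}\lesssim r^4\varphi_{n,m}$. Summing the two bounds, taking the supremum over $(\mu,\nu)$ and over $\|A\|_{\op}\le R$, and absorbing the constants into $\kappa$ yields (\ref{eq: compact rate 2}); the case $d_y<d_x$ is symmetric. The crux of the argument is precisely this lower complexity adaptation step --- showing that the empirical process over the \emph{higher}-dimensional side still decays at the rate governed by $d_x\wedge d_y$; equivalently one may invoke Lemma~2.1 of \cite{hundrieser2024empirical} as in \cite{zhang2024gromov}, the only additional bookkeeping being to track the $r$-dependence and to verify that the auxiliary $A$-net contributes only lower-order terms, so that no spurious logarithmic factor arises even when $d_x\wedge d_y<4$.
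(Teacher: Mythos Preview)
Your proposal is correct and follows essentially the same strategy as the paper: duality, concavity/Lipschitz estimates for the potentials, Bronshtein-type entropy for bounded Lipschitz concave functions, and lower complexity adaptation via the contraction property of the $c_A$-transform. The only cosmetic differences are that the paper first rescales to $r=1$ via $\E[c_A(X,Y)]=r^4\,\E[c_{A/r^2}(X/r,Y/r)]$ (so no $r$-tracking is needed in the entropy step) and, since the supremum over $A$ in the lemma sits \emph{outside} the expectation, works with $\calF^{c_A}$ for each fixed $A$ rather than your pooled class $\calG_r'$; your extra $A$-net term is therefore unnecessary but harmless.
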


\begin{proof}[Proof of Lemma \ref{lem: bounded rate}]
The proof is essentially contained in the proof of Theorem 4.2 in \cite{zhang2024gromov}, so we only provide an outline. In this proof, the notation $\lesssim$ means that an inequality holds up to a constant that depends only on $d_x, d_y$ and $R$. 

For any random vector $(X,Y)$ supported in $B_{\calX}(0,r) \times B_{\calY}(0,r)$, 
\[
\E[c_A(X,Y)] = r^4\E[c_{A/r^2}(X/r,Y/r)].
\]
Since $(X/r,Y/r)$ is supported in $B_{\calX}(0,1) \times B_{\calY}(0,1)$ and $\| A/r^2 \|_{\op} \le  \|A\|_{\op}$, the left-hand side on (\ref{eq: compact rate 2}) is bounded above by
\[
r^4\sup_{\substack{(\mu,\nu) \in \calP(B_{\calX}(0,1)) \times \calP(B_{\calY}(0,1)) \\ \|A\|_{\op} \le R}}\E\left [ \big|T_{c_{A}}(\hat{\mu}_n,\hat{\nu}_m)-T_{c_A}(\mu,\nu)\big| \right ].
\]
Thus, it suffices to prove (\ref{eq: compact rate 2}) for $r=1$.

We may assume without loss of generality that $d_x \le d_y$, so that $d_x \wedge d_y = d_x$. For a sufficiently large constant $K$ that depends only on $d_x,d_y$ and $R$,  consider the function class 
\[
\calF = \Big\{ f: B_{\calX}(0,1) \to \R: \text{$f$ is concave and $\| f \|_{\infty} \vee \| f \|_{\mathrm{Lip}} \le K$} \Big\},
\]
where $\| \cdot \|_{\infty}$ and $\| \cdot \|_{\mathrm{Lip}}$ denote the sup-norm and Lipschitz constant, respectively, i.e.,
\[
\| f \|_{\infty} := \sup_{x \in B_{\calX}(0,1)} | f(x) | \quad \text{and} \quad \| f \|_{\mathrm{Lip}} := \sup_{\substack{x,x' \in B_{\calX}(0,1) \\ x \ne x'}} \frac{|f(x)-f(x')|}{\|x-x'\|}.
\]
Furthermore, let $\calF^{c_A}$ be the set of $c_A$-transforms of functions in $\calF$, i.e., 
\[
\calF^{c_A} := \Big\{ f^{c_A}: B_{\calY}(0,1) \to \R : f \in \calF \Big\} \quad \text{with} \quad f^{c_A}(y) := \inf_{x \in B_{\calX}(0,1)} \{ c_A(x,y) - f(x) \}.
\]

By Lemma 5.4 in \cite{zhang2024gromov}, versions of dual potentials for $(\mu,\nu)$ and $(\hat{\mu}_n,\hat{\nu}_m)$  both belong to $\calF \times \calF^{c_A}$. By duality, 
\[
\begin{split}
\big|T_{c_{A}}(\hat{\mu}_n,\hat{\nu}_m)-T_{c_A}(\mu,\nu)\big| &\le \sup_{f \in \calF} \left | \int f \, d(\hat{\mu}_n-\mu) \right| + \sup_{g \in \calF^{c_A}} \left | \int g \, d(\hat{\nu}_m-\nu) \right| \\
&=:I+II.
\end{split}
\]
From $\|\cdot\|_{\infty}$-entropy number estimates for $\calF$  (cf. Corollary 2.7.10 in \cite{van1996weak}), combined with a version of Dudley's entropy integral bound (cf. Theorem 16 in \cite{luxburg2004distance}), we have
\[
\E[I] \lesssim  \inf_{\alpha > 0} \left\{ \alpha + \frac{1}{\sqrt{n}} \int_\alpha^1 \tau^{-d_x/4} \, d\tau\right\} \lesssim n^{-\frac{2}{d_x \vee 4}} (\log n)^{\mathbbm{1}_{\{d_x = 4 \}}}.
\]
For the second term $II$, by Lemma 2.1 in \cite{hundrieser2024empirical} (or by the definition of the $c_A$-transform), $\|\cdot\|_{\infty}$-covering numbers for $\calF^{c_A}$ are no greater than those for $\calF$, and as such, arguing as in the previous case, we have $\E[II] \lesssim m^{-\frac{2}{d_x \vee 4}}(\log m)^{\mathbbm{1}_{\{d_x = 4 \}}}$. The conclusion follows from the observation that
\[
n^{-\frac{2}{d_x \vee 4}} (\log n)^{\mathbbm{1}_{\{d_x = 4 \}}} + m^{-\frac{2}{d_x \vee 4}}(\log m)^{\mathbbm{1}_{\{d_x = 4 \}}} \lesssim \varphi_{n,m}.
\]
Indeed, this is trivial except for $d_x = 4$, so assume $d_x = 4$ and that $n \le m$ without loss of generality. Setting $C = m/n \ge 1$, we have $m^{-1/2} \log m = n^{-1/2} C^{-1/2} (\log n + \log C)$, and since the function $z \mapsto z^{-1/2} \log z$ is bounded on $[1,\infty)$, we have $m^{-1/2} \log m \lesssim n^{-1/2}\log n$. 
\end{proof}

\begin{proof}[Proof of Lemma \ref{lem: TA}]
The proof applies Theorem \ref{thm: simplified} below.
The said theorem assumes that the cost function is nonnegative, so instead of working with $c_A$, we will work with the modified cost function
\begin{equation}
\bar{c}_A (x,y) := c_A(x,y) + 2\|x\|^4+16R\|x\|^2 + 2\|y\|^4+ 16R\|y\|^2 \ge 0. 
\label{eq: modified cost}
\end{equation}
A simple computation yields that 
\[
\begin{split}
&\E\left [ \big|T_{\bar{c}_{A}}(\hat{\mu}_n,\hat{\nu}_m)-T_{\bar{c}_{A}}(\mu,\nu) - T_{c_{A}}(\hat{\mu}_n,\hat{\nu}_m)+T_{c_{A}}(\mu,\nu)\big| \right ]\\
&\le \E\Big[2\big|\mathfrak{m}_4(\hat{\mu}_n)-\mathfrak{m}_4(\mu)\big|+2\big|\mathfrak{m}_4(\hat{\nu}_m)-\mathfrak{m}_4(\nu)\big| \\
&\qquad+ 16R\big|\mathfrak{m}_2(\hat{\mu}_n)-\mathfrak{m}_2(\mu)\big|+16R\big|\mathfrak{m}_2(\hat{\nu}_m)-\mathfrak{m}_2(\nu)\big|\Big] \\
&\le \left \{2 \sqrt{\mathfrak{m}_8(\mu)} + 2\sqrt{\mathfrak{m}_8(\nu)} + 16R\sqrt{\mathfrak{m}_4(\mu)} + 16R\sqrt{\mathfrak{m}_4(\nu)} \right \} (n\wedge m)^{-1/2}.
\end{split}
\]
As such, it suffices to establish the conclusion with $c_A$ replaced by $\bar{c}_A$. 

To apply Theorem \ref{thm: simplified}, we need to verify Condition (\ref{eq: BC}) below. To this end, we first show that there exists $\kappa > 0$ depending only on $d_x,d_y$ and $R$ such that for all $r \ge 1, n,m \ge 2$, and $(\mu,\nu) \in \calP(B_\calX(0,r)) \times \calP(B_\calY(0,r))$, 
\begin{equation}
\E\left [ \big|T_{\bar{c}_{A}}(\hat{\mu}_n,\hat{\nu}_m)-T_{\bar{c}_A}(\mu,\nu)\big| \right ] \le \kappa r^4 \varphi_{n,m}.
\label{eq: compact rate}
\end{equation}
Observe that for $(\mu,\nu) \in \calP(B_\calX(0,r)) \times \calP(B_\calY(0,r))$,
\[
\begin{split}
&\E\left [ \big|T_{\bar{c}_{A}}(\hat{\mu}_n,\hat{\nu}_m)-T_{\bar{c}_{A}}(\mu,\nu) - T_{c_{A}}(\hat{\mu}_n,\hat{\nu}_m)+T_{c_{A}}(\mu,\nu)\big| \right ] \\
&\quad \le (4r^4+32Rr^2) (n \wedge m)^{-1/2} \\
&\quad \le (4+32R) r^4(n \wedge m)^{-1/2}. 
\end{split}
\]
Combining Lemma \ref{lem: bounded rate}, we obtain (\ref{eq: compact rate}) with a suitable $\kappa$.

Now, observe that
\[
\begin{split}
\bar{c}_A(x,y) &\le (4\|x\|^4+32R\|x\|^2) + (4\|y\|^4+32R\|y\|^2) \\
&\le \underbrace{\big \{(4+32R)\|x\|^4 + 32R \big\}}_{=:\bar{c}_\calX(x)} + \underbrace{\big \{(4+32R)\|y\|^4 + 32R \big\}}_{=:\bar{c}_\calY(y)},
\end{split}
\]
and $\| \bar{c}_{\calX} \|_{L^q(\mu)}^q \vee \| \bar{c}_{\calY} \|_{L^q(\nu)}^q \lesssim 1$ up to a constant that depends only on $q, M$ and $R$.
Since 
\[
\bar{c}_\calX^{-1}([0,r]) = 
\begin{cases}
 B_\calX \left ( 0, \left ( \frac{r-32R}{4+32R}\right)^{1/4}\right ) & r \ge 32R, \\
 \varnothing & r < 32R,
\end{cases}
\]
Condition (\ref{eq: BC}) holds with $\alpha=\frac{2}{(d_x \wedge d_y) \vee 4}$ and $\delta=\mathbbm{1}_{\{d_x \wedge d_y = 4 \}}$.
The desired result then follows from Theorem \ref{thm: simplified}.
\end{proof}

The following lemma concerning moment estimates of dual potentials will also be used.

\begin{lemma}
\label{lem: moment}
Let $q, R \ge 1$ be given. Set $\bar{c}_\calX(x):=(4+32R)\|x\|^4 + 32R$ and $\bar{c}_\calY(y):=(4+32R)\|y\|^4 + 32R$. For every $(\mu,\nu) \in \calP_{4q}(\calX) \times \calP_{4q}(\calY)$ and $A \in \R^{d_x \times d_y}$ with $\| A \|_{\op} \le R$, one can find dual potentials $(f_A,g_A)$ for $(\mu,\nu)$ w.r.t. cost $c_A$ such that
\[
\| f_A \|_{L^q(\mu)}^q + \| g_A \|_{L^q(\nu)}^q \le 2^{4q+3}(\| \bar{c}_\calX \|_{L^q(\mu)}^q+ \| \bar{c}_\calY \|_{L^q(\nu)}^q).
\]
\end{lemma}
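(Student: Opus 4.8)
The plan is to exploit the structure of $c_A$ as a perturbation of the product cost $c_0(x,y) = -4\|x\|^2\|y\|^2$ and to bound the dual potentials by the dominating functions $\bar c_\calX, \bar c_\calY$ up to the $c_A$-transform mechanism. First I would recall that, by OT duality (Theorem 5.9 in \cite{villani2008optimal}), since $|c_A(x,y)| \le \bar c_\calX(x) + \bar c_\calY(y)$ with $\bar c_\calX \in L^q(\mu) \subset L^1(\mu)$ and $\bar c_\calY \in L^q(\nu) \subset L^1(\nu)$, one may take dual potentials $(f_A, g_A)$ with $f_A$ $c_A$-concave, $g_A = f_A^{c_A}$, and additionally $f_A = (f_A^{c_A})^{\bar c_A}$, i.e. both potentials are $c_A$-transforms. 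The key point is that $c_A$-concave functions inherit, via the $c_A$-transform, the pointwise bounds of the cost: from $g_A(y) = \inf_x \{ c_A(x,y) - f_A(x)\}$ one gets for each $y$, choosing a near-optimal $x$, that $g_A(y) \le c_A(x,y) - f_A(x)$, and similarly a lower bound from the defining infimum. I would use the normalization freedom (adding a constant to $f_A$ and subtracting it from $g_A$) to fix, say, $f_A(x_0) = 0$ at some reference point, or more robustly to center $f_A$ so that $\inf_{x} f_A(x) + \sup_x(\cdot)$ type quantities stay controlled.

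The main technical step is to turn the two-sided inequality coming from the $c_A$-transform into a pointwise bound $|f_A(x)| \le \text{(const)}\cdot \bar c_\calX(x) + C_1$ and $|g_A(y)| \le \text{(const)}\cdot \bar c_\calY(y) + C_2$, where the additive constants $C_1, C_2$ depend only on $\mu,\nu$ (through $\|\bar c_\calX\|_{L^1(\mu)}, \|\bar c_\calY\|_{L^1(\nu)}$) but not on $A$ in the relevant range $\|A\|_{\op}\le R$. Concretely: from $g_A = f_A^{c_A}$ and $f_A = g_A^{\bar c_A}$ (the $\bar c_A$-transform from $\calY$ back to $\calX$), one has, for the normalized potentials, bounds of the form $-\bar c_\calX(x) - \|\bar c_\calY\|_{L^1(\nu)} - (\text{stuff}) \le f_A(x) \le \bar c_\calX(x) + (\text{stuff})$, by inserting the optimal $x$ near $x_0$ and integrating. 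The constant factor $2$ (from $\bar c_\calX \le 2\bar c_\calX$-type slack in the two-sided estimates) and the presence of additive $L^1$-norms are what generate the $2^{4q+3}$ constant after raising to the $q$-th power and using $(a+b)^q \le 2^{q-1}(a^q+b^q)$ together with $\|1\|_{L^q(\mu)}^q = 1$. I would then integrate $|f_A|^q$ against $\mu$ and $|g_A|^q$ against $\nu$, apply the convexity inequality $(\sum_i a_i)^q \le k^{q-1}\sum_i a_i^q$ with $k$ equal to the number of summands appearing in the pointwise bound (which will be a small fixed number, say $4$, giving $4^{q-1} \le 2^{2q}$), and collect constants to land below $2^{4q+3}(\|\bar c_\calX\|_{L^q(\mu)}^q + \|\bar c_\calY\|_{L^q(\nu)}^q)$; the slack in the exponent $4q+3$ is generous enough to absorb the $\|\bar c_\calX\|_{L^1(\mu)} \le 1 + \|\bar c_\calX\|_{L^q(\mu)}$ type reductions (using $\bar c_\calX \ge 32R \ge 1$, so $\|\bar c_\calX\|_{L^1(\mu)}^q \le \|\bar c_\calX\|_{L^q(\mu)}^q$ by Jensen after normalizing, or simply $\|\bar c_\calX\|_{L^1(\mu)} \le \|\bar c_\calX\|_{L^q(\mu)}$).

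I expect the main obstacle to be \emph{uniformity in $A$} of the additive constants: naively the $c_A$-transform bound involves $\inf_x c_A(x,y)$ or values of $c_A$ at the chosen reference points, and $c_A$ itself is unbounded and $A$-dependent through the $-32 x^\top A y$ term. The resolution is that $|{-32x^\top A y}| \le 32R\|x\|\,\|y\| \le 16R(\|x\|^2 + \|y\|^2)$, which is exactly why $\bar c_\calX, \bar c_\calY$ were defined with the $32R\|x\|^2, 32R\|y\|^2$ (or $\|x\|^4$) summands — so the cross term is already dominated by the split functions uniformly over $\|A\|_{\op}\le R$, and every estimate above goes through with $A$-independent constants. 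A secondary, purely bookkeeping obstacle is choosing the centering of $f_A$ so that the reference-point contributions are controlled by $L^1$-norms rather than pointwise suprema; this is handled by the standard trick of normalizing $\int f_A \, d\mu = 0$ or by centering at a point where $\bar c_\calX$ is minimized, after which Jensen's inequality converts everything into the desired $L^q$-norms. Once these two points are in hand, the computation is routine and the stated constant $2^{4q+3}$ follows with room to spare.
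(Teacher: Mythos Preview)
Your approach diverges from the paper's and contains a gap at the key step. The paper's proof is much shorter and indirect: it shifts to the \emph{nonnegative} cost
\[
\bar{c}_A(x,y) = c_A(x,y) + 2\|x\|^4 + 16R\|x\|^2 + 2\|y\|^4 + 16R\|y\|^2,
\]
invokes Lemma~5.4 of \cite{staudt2025convergence} as a black box to obtain dual potentials $(\bar f_A,\bar g_A)$ for $\bar c_A$ with $\|\bar f_A\|_{L^q(\mu)}^q + \|\bar g_A\|_{L^q(\nu)}^q \le 2^{3q+3}(\|\bar c_\calX\|_{L^q(\mu)}^q + \|\bar c_\calY\|_{L^q(\nu)}^q)$, and then simply sets $f_A(x) = \bar f_A(x) - (2\|x\|^4+16R\|x\|^2)$ and $g_A(y) = \bar g_A(y) - (2\|y\|^4+16R\|y\|^2)$, which are dual potentials for $c_A$; one application of $(a+b)^q\le 2^{q-1}(a^q+b^q)$ then yields the stated constant $2^{4q+3}$. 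No pointwise analysis of the $c_A$-transform is carried out in the paper itself.

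Your proposal instead attempts to derive pointwise two-sided bounds $-\bar c_\calX(x)-C \le f_A(x) \le \bar c_\calX(x)+C$ directly from the $c_A$-transform structure, and this is where the gap lies. The upper bound is indeed routine: from $f_A(x)\le c_A(x,y)-g_A(y)$ for every $y$, averaging in $y\sim\nu$ with the normalization $\int g_A\,d\nu=0$ gives $f_A(x)\le \bar c_\calX(x)+\|\bar c_\calY\|_{L^1(\nu)}$. But the lower bound does \emph{not} follow from ``inserting the optimal $x$ near $x_0$ and integrating.'' Concretely, the only route you have to a lower bound on $f_A(x)=\inf_y[c_A(x,y)-g_A(y)]$ is a pointwise lower bound on each term $c_A(x,y)-g_A(y)$; using the upper bound $g_A(y)\le \bar c_\calY(y)+C$ gives $c_A(x,y)-g_A(y)\ge -\bar c_\calX(x)-2\bar c_\calY(y)-C$, and taking the infimum over $y$ yields $-\infty$ since $\bar c_\calY$ is unbounded. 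The same obstruction appears for $g_A$. The averaging trick only produces \emph{upper} bounds on an infimum, and normalizing $\int f_A\,d\mu=0$ together with the upper bound controls only $\|(f_A)_-\|_{L^1(\mu)}$, which Jensen's inequality cannot upgrade to $L^q$. The genuine content of Lemma~5.4 in \cite{staudt2025convergence} is precisely to circumvent this, and your sketch does not reproduce that argument; the paper avoids the issue entirely by citing the lemma and performing the additive shift.
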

\begin{proof}
Let $\bar{c}_A \ge 0$ be as defined in (\ref{eq: modified cost}). 
By Lemma 5.4 in \cite{staudt2025convergence}, one can find dual potentials $(\bar{f}_A,\bar{g}_A)$ for $\bar{c}_A$ such that
\[
\| \bar{f}_A \|_{L^q(\mu)}^q + \| \bar{g}_A \|_{L^q(\nu)}^q \le 2^{3q+3}(\| \bar{c}_\calX \|_{L^q(\mu)}^q+ \| \bar{c}_\calY \|_{L^q(\nu)}^q).
\]
We now observe that
\[
f_A(x) := \bar{f}_A(x) - (2\|x\|^4+16R\|x\|^2) \quad \text{and} \quad g_A(y):=\bar{g}_A(y) - (2\|y\|^4+16R\|y\|^2)
\]
are dual potentials for $c_A$ satisfying
\[
\begin{split}
&\| f_A \|_{L^q(\mu)}^q + \| g_A \|_{L^q(\nu)}^q\\
&\le 2^{q-1} \| \bar{f}_A \|_{L^q(\mu)}^q + 2^{q-1} \| \bar{c}_{\calX}/2 \|_{L^q(\mu)}^q + 2^{q-1} \| \bar{g}_A \|_{L^q(\nu)}^q + 2^{q-1} \| \bar{c}_{\calY}/2 \|_{L^q(\nu)}^q \\
&\le 2^{4q+3}(\| \bar{c}_\calX \|_{L^q(\mu)}^q+ \| \bar{c}_\calY \|_{L^q(\nu)}^q),
\end{split}
\]
completing the proof. 
\end{proof}

We are now in position to prove Theorem \ref{thm: upper bounds}.

\begin{proof}[Proof of Theorem \ref{thm: upper bounds}]
Pick any $(\mu,\nu) \in \calP_{4q}(\calX) \times \calP_{4q}(\calY)$ with $\mathfrak{m}_{4q}(\mu) \vee \mathfrak{m}_{4q}(\nu) \le M$.  In this proof, the notation $\lesssim$ means that an inequality holds up to a constant that depends only on $d_x,d_y,q$ and $M$.
Assume without loss of generality that $\mu$ and $\nu$ have mean zero.  
Let $\tilde{\mu}_n$ and $\tilde{\nu}_m$ be the centered versions of $\hat{\mu}_n,\hat{\nu}_m$, respectively. We first observe that
\[
D(\hat{\mu}_n,\hat{\nu}_m) = D(\tilde{\mu}_n,\tilde{\nu}_m) = S_1(\tilde{\mu}_n,\tilde{\nu}_m) + S_2(\tilde{\mu}_n,\tilde{\nu}_m). 
\]
By direct calculations, it is not difficult to see that 
\begin{equation}
\begin{split}
\E\big [ |S_1(\tilde{\mu}_n,\tilde{\nu}_m) - S_1(\mu,\nu)| \big] &\lesssim (n \wedge m)^{-1/2},  \\
\E\left [ |S_2(\tilde{\mu}_n,\tilde{\nu}_m)-S_2(\hat{\mu}_n,\hat{\nu}_m) \big| \right ] &\lesssim (n \wedge m)^{-1/2}. 
\label{eq: trivial}
\end{split}
\end{equation}
These estimates follow from tedious but straightforward computations; for completeness, we provide their proofs in Lemma \ref{lem: S1S2} below.

Let $A^\star$ be a matrix with $\| A^\star \|_{\op} \le \sqrt{\mathfrak{m}_2(\mu)\mathfrak{m}_2(\nu)}/2$ that achieves the infimum in the variational representation (\ref{eq: variational}), 
\[
S_2(\mu,\nu) = 32 \|A^\star\|_{\mathrm{F}}^2 + T_{c_{A^\star}}(\mu,\nu). 
\]
Applying the variational representation to $S_2(\hat{\mu}_n,\hat{\nu}_m)$, one has
\[
S_2(\hat{\mu}_n,\hat{\nu}_m) -S_2(\mu,\nu) \le T_{c_{A^\star}}(\hat{\mu}_n,\hat{\nu}_m)-T_{c_{A^\star}}(\mu,\nu).
\]

To find a lower bound, 
define the event
\[
\mathcal{E}_{n,m} = \big\{ \mathfrak{m}_2(\hat{\mu}_n) \le \mathfrak{m}_2(\mu) + 1 \big \} \cap \{ \mathfrak{m}_2(\hat{\nu}_m) \le \mathfrak{m}_2(\nu) + 1 \big \}. 
\]
By Chebyshev's inequality, we have
\[
\begin{split}
\Prob(\mathcal{E}_{n,m}^c) &\le \Prob \big(\mathfrak{m}_2(\hat{\mu}_n)-\mathfrak{m}_2(\mu) \ge 1\big) + \Prob\big(\mathfrak{m}_2(\hat{\nu}_m) -\mathfrak{m}_2(\nu) \ge 1\big) \\
&\le \big(\mathfrak{m}_4(\mu) + \mathfrak{m}_4(\nu)\big)(n \wedge m)^{-1}. 
\end{split}
\]
Observe that on the event $\mathcal{E}_{n,m}$, by Lemma \ref{lem: variational}, the following variational representation holds:
\[
S_2(\hat{\mu}_n,\hat{\nu}_m) = \inf_{A \in \calA} \big \{ 32 \|A\|_{\mathrm{F}}^2 + T_{c_A}(\hat{\mu}_n,\hat{\nu}_m)\big\},
\]
where $\calA:=\big\{ A \in \R^{d_x \times d_y} : \| A \|_{\op} \le R \big\}$ with $R:=\sqrt{(\mathfrak{m}_2(\mu) + 1)(\mathfrak{m}_2(\nu) + 1)}/2$. 
An analogous variational representation holds for $S_2(\mu,\nu)$. As such, on the event $\mathcal{E}_{n,m}$, 
\[
S_2(\hat{\mu}_n,\hat{\nu}_m) - S_2(\mu,\nu) \ge \inf_{A \in \calA} \big \{ T_{c_A}(\hat{\mu}_n,\hat{\nu}_m) - T_{c_A}(\mu,\nu)\big\}.
\]
We further discretize the set $\calA$. Observe that
\[
\begin{split}
|c_A(x,y) - c_{B}(x,y)| 
&\le 32\|x\| \|y\| \| A-B \|_{\op} \\
&\le 16(\|x\|^2+\|y\|^2)\|A-B\|_{\op}.
\end{split}
\]
For $\epsilon > 0$, let $\mathcal{N}_\epsilon$ be an $\epsilon$-net for $\calA$ w.r.t. $\| \cdot \|_{\op}$, so that for any $A \in \calA$, there exists $B \in \mathcal{N}_\epsilon$ such that $\| A - B \|_{\op} \le \epsilon$.
By volumetric argument, it is not difficult to see that 
\[
|\mathcal{N}_\epsilon| \le \left ( \frac{3R}{\epsilon}\right )^{d_xd_y}
\]
for all $0 < \epsilon < 1$. We shall choose
\[
\epsilon = \epsilon_{n,m} := (n \wedge m)^{-\frac{2}{(d_x \wedge d_y) \vee 4}}. 
\]
By construction, 
\[
\begin{split}
\inf_{A \in \calA} \big \{ T_{c_A}(\hat{\mu}_n,\hat{\nu}_m) - T_{c_A}(\mu,\nu)\big\} &\ge \min_{A \in \mathcal{N}_{\epsilon_{n,m}}} \big \{ T_{c_A}(\hat{\mu}_n,\hat{\nu}_m) - T_{c_A}(\mu,\nu)\big \} \\
&\quad - 16\big(\mathfrak{m}_2(\hat{\mu}_n) + \mathfrak{m}_2(\hat{\nu}_m)+\mathfrak{m}_2(\mu) + \mathfrak{m}_2(\nu)\big)\epsilon_{n,m}.
\end{split}
\]
For each $A \in \calA$, let $(f_A,g_A)$ be dual potentials for $(\mu,\nu)$ w.r.t. cost $c_A$. By duality,
\[
T_{c_A}(\hat{\mu}_n,\hat{\nu}_m) - T_{c_A}(\mu,\nu) \ge \int f_A \, d(\hat{\mu}_n-\mu) + \int g_A \, d(\hat{\nu}_m-\nu).
\]
Outside $\mathcal{E}_{n,m}$, we use the following crude bound:
\[
S_2(\hat{\mu}_n,\hat{\nu}_m) - S_2(\mu,\nu) \ge S_2(\hat{\mu}_n,\hat{\nu}_m) \ge -2\mathfrak{m}_4(\hat{\mu}_n) - 4\mathfrak{m}_2^2(\hat{\mu}_n) - 2\mathfrak{m}_4(\hat{\nu}_m) - 4\mathfrak{m}_2^2(\hat{\nu}_m).
\]
Now, we observe that 
\[
\begin{split}
\E\left [ \big| S_2(\hat{\mu}_n,\hat{\nu}_m) - S_2(\mu,\nu) \big |\right] &\lesssim \E\left [ \big|T_{c_{A^\star}}(\hat{\mu}_n,\hat{\nu}_m)-T_{c_{A^\star}}(\mu,\nu) \big|\right ] \\
&\qquad + \E\left [ \max_{A \in \mathcal{N}_{\epsilon_{n,m}}} \left | \int f_A \, d(\hat{\mu}_n -\mu) \right|\right ] \\
&\qquad + \E\left [ \max_{A \in \mathcal{N}_{\epsilon_{n,m}}} \left | \int g_A \, d(\hat{\nu}_m -\nu) \right|\right ] \\
&\qquad + \E\big [ \big ( \mathfrak{m}_4(\hat{\mu}_n)+ \mathfrak{m}_4(\hat{\nu}_m)  \big) \mathbbm{1}_{\mathcal{E}_{n,m}^c} \big] + (n \wedge m)^{-\frac{2}{(d_x \wedge d_y)\vee 4}}.
\end{split}
\]
By the Cauchy-Schwarz inequality, the fourth term on the right-hand side is $\lesssim (n \wedge m)^{-1/2}$, and Lemma \ref{lem: TA} yields that the first term is $\lesssim \varphi_{n,m}$.

It remains to find upper bounds on 
\[
 \E\left [ \max_{A \in \mathcal{N}_{\epsilon_{n,m}}} \left | \int f_A \, d(\hat{\mu}_n -\mu) \right|\right ] \quad \text{and} \quad \E\left [ \max_{A \in \mathcal{N}_{\epsilon_{n,m}}} \left | \int g_A \, d(\hat{\nu}_m -\mu) \right|\right ].
\]
Set 
\[
Y_A = \left | \int f_A \, d(\hat{\mu}_n-\mu) \right | \quad \text{and} \quad W_A = \left | \int g_A \, d(\hat{\nu}_m-\nu) \right|.
\]
Having in mind the fact that $\mathcal{N}_{\epsilon_{n,m}}$ is a finite set, we apply Lemma 8 in \cite{chernozhukov2015comparison} (restated in Lemma \ref{lem: maximal inequality} below) to find bounds on $\E[\max_{A \in \mathcal{N}_{\epsilon_{n,m}}}|Y_A|]$ and $\E[\max_{A \in \mathcal{N}_{\epsilon_{n,m}}}|W_A|]$. 
By Lemma \ref{lem: moment}, one can choose versions of $f_A$ and $g_A$ in such a way that 
\[
\sup_{A \in \calA} \Big(\| f_A \|_{L^q(\mu)}^q \vee \| g_A \|_{L^q(\nu)}^q \Big) \lesssim 1.
\]
This implies $\sup_{A \in \calA}\| f_A \|_{L^2(\mu)}^2 \lesssim 1$ and
\[
\E\left [ \max_{1 \le i \le n} \max_{A \in \mathcal{N}_{\epsilon_{n,m}}} f_A^2(X_i)\right] \lesssim (n|\mathcal{N}_{\epsilon_{n,m}}|)^{2/q}.
\]
Recalling that $|\mathcal{N}_{\epsilon_{n,m}}| \lesssim \epsilon_{n,m}^{-d_x d_y} \lesssim (n \wedge m)^{2(d_x \vee d_y)} \wedge (n \wedge m)^{\frac{d_xd_y}{2}}$, by Lemma \ref{lem: maximal inequality}, we have
\[
\E\Big [ \max_{A \in \mathcal{N}_{\epsilon_{n,m}}}|Y_A| \Big] \lesssim n^{-\frac{1}{2}}\sqrt{\log (n \wedge m)} + n^{\frac{1-q}{q}} \Big \{ (n \wedge m)^{\frac{2(d_x\vee d_y)}{q}} \wedge  (n \wedge m)^{\frac{d_xd_y}{2q}} \Big \} \log (n \wedge m).
\]
A similar estimate holds for $W_A$. Putting everything together, we obtain the desired conclusion. This completes the proof of the theorem.
\end{proof}

It remains to verify the inequalities in (\ref{eq: trivial}). For a later purpose, we prove slightly sharper estimates. 

\begin{lemma}
\label{lem: S1S2}
Suppose $\mu$ and $\nu$ have mean zero and finite 4-th moments. Recall that $\tilde{\mu}_n$ and $\tilde{\nu}_m$ are the centered versions of $\hat{\mu}_n$ and $\hat{\nu}_n$, respectively. Then,
\begin{align}
&\E\left [ \left | S_1(\tilde{\mu}_n,\tilde{\nu}_m) - S_1(\mu,\nu) - \frac{2}{n}\sum_{i=1}^n (\|X_i\|^4 -\mathfrak{m}_4(\mu))- \frac{2}{m}\sum_{j=1}^m (\|Y_j\|^4-\mathfrak{m}_4(\nu)) \right | \right ] \notag \\
&\hspace{1in} \lesssim (n \wedge m)^{-1/2}, \label{eq: S1} \\
&\E\left [ \left | S_2(\tilde{\mu}_n,\tilde{\nu}_m) - S_2(\hat{\mu}_n,\hat{\nu}_m) \right | \right ] \lesssim (n \wedge m)^{-1/2}, \label{eq: S2}
\end{align}
where the hidden constants depend only on upper bounds on $\mathfrak{m}_4(\mu)$ and $\mathfrak{m}_4(\nu)$. If, in addition, $\mu$ and $\nu$ have finite 8-th moments, then $\E[| S_1(\tilde{\mu}_n,\tilde{\nu}_m) - S_1(\mu,\nu)|] \lesssim (n \wedge m)^{-1/2}$ up to a constant that depends only on upper bounds on $\mathfrak{m}_8(\mu)$ and $\mathfrak{m}_8(\nu)$. 
\end{lemma}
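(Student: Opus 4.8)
The plan is to prove $(\ref{eq: S1})$ by expanding $S_1$ explicitly in moments and reducing to a handful of elementary moment gaps, to prove $(\ref{eq: S2})$ by exploiting that centering the empirical measures amounts to translating their couplings, and then to deduce the eighth-moment refinement directly from $(\ref{eq: S1})$. First I would record the mean-zero expansion: since $\mu,\nu$ and $\tilde\mu_n,\tilde\nu_m$ are all centered, expanding $\|x-x'\|^4=(\|x\|^2+\|x'\|^2-2\langle x,x'\rangle)^2$ and integrating (the cross terms linear in a single variable vanish) gives, with $\Sigma_\mu:=\int xx^\top\,d\mu$,
\[
S_1(\mu,\nu)=2\mathfrak{m}_4(\mu)+2\mathfrak{m}_2(\mu)^2+4\|\Sigma_\mu\|_{\mathrm{F}}^2+2\mathfrak{m}_4(\nu)+2\mathfrak{m}_2(\nu)^2+4\|\Sigma_\nu\|_{\mathrm{F}}^2-4\mathfrak{m}_2(\mu)\mathfrak{m}_2(\nu),
\]
and the identical formula with $(\tilde\mu_n,\tilde\nu_m)$ in place of $(\mu,\nu)$. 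Writing $\mathfrak{m}_4(\tilde\mu_n)-\mathfrak{m}_4(\mu)=\big(\mathfrak{m}_4(\tilde\mu_n)-\mathfrak{m}_4(\hat\mu_n)\big)+\frac1n\sum_{i}(\|X_i\|^4-\mathfrak{m}_4(\mu))$ (and likewise for $\nu$) isolates exactly the two sums subtracted off in $(\ref{eq: S1})$, so the left-hand side of $(\ref{eq: S1})$ reduces to a fixed linear combination of the gaps $\mathfrak{m}_4(\tilde\mu_n)-\mathfrak{m}_4(\hat\mu_n)$, $\mathfrak{m}_2(\tilde\mu_n)^2-\mathfrak{m}_2(\mu)^2$, $\|\Sigma_{\tilde\mu_n}\|_{\mathrm{F}}^2-\|\Sigma_\mu\|_{\mathrm{F}}^2$, $\mathfrak{m}_2(\tilde\mu_n)\mathfrak{m}_2(\tilde\nu_m)-\mathfrak{m}_2(\mu)\mathfrak{m}_2(\nu)$ and the $\nu$-analogues, and it is enough to bound each of these in $L^1$ by a multiple of $(n\wedge m)^{-1/2}$, with constant depending only on upper bounds for $\mathfrak{m}_4(\mu),\mathfrak{m}_4(\nu)$.

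To control these gaps I would use $\mathfrak{m}_2(\tilde\mu_n)=\mathfrak{m}_2(\hat\mu_n)-\|\bar X_n\|^2$ and $\Sigma_{\tilde\mu_n}=\hat\Sigma_n-\bar X_n\bar X_n^\top$ (with $\bar X_n=\frac1n\sum_iX_i$, $\hat\Sigma_n=\frac1n\sum_iX_iX_i^\top$), together with the standard bound $\E\|\bar X_n\|^4\lesssim\mathfrak{m}_4(\mu)/n^2$ for centered i.i.d. summands and the elementary $L^2$-estimates $\Var(\mathfrak{m}_2(\hat\mu_n))\le\mathfrak{m}_4(\mu)/n$, $\E\|\hat\Sigma_n-\Sigma_\mu\|_{\mathrm{F}}^2\le\mathfrak{m}_4(\mu)/n$, $\E\mathfrak{m}_2(\hat\mu_n)^2\le\mathfrak{m}_4(\mu)$, $\E\|\hat\Sigma_n\|_{\mathrm{F}}^2\le\mathfrak{m}_4(\mu)$. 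Writing each difference of squares or products in telescoped form $\langle a-b,a+b\rangle$ and applying Cauchy-Schwarz (and, for the cross-marginal product, the independence of the $X$- and $Y$-samples) then yields $O((n\wedge m)^{-1/2})$ for all gaps except $\mathfrak{m}_4(\tilde\mu_n)-\mathfrak{m}_4(\hat\mu_n)=\frac1n\sum_i\big(\|X_i-\bar X_n\|^4-\|X_i\|^4\big)$. Here, after expanding the fourth power, every term is $O(n^{-1})$ in mean via $\E\|\bar X_n\|^4\lesssim n^{-2}$, except the cross term $-4\langle V_n,\bar X_n\rangle$ with $V_n:=\frac1n\sum_i\|X_i\|^2X_i$ --- and this is the main obstacle, since a direct Cauchy-Schwarz is unavailable: under only a fourth-moment hypothesis the summands $\|X_i\|^2X_i$ of $V_n$ need not be square-integrable. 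I would handle it by a leave-one-out decomposition: with $\bar X_n^{(i)}:=\frac1{n-1}\sum_{j\ne i}X_j$ (independent of $X_i$) one has $\bar X_n=\frac1nX_i+\frac{n-1}{n}\bar X_n^{(i)}$, hence
\[
\langle V_n,\bar X_n\rangle=\frac1{n^2}\sum_i\|X_i\|^4+\frac{n-1}{n^2}\sum_i\|X_i\|^2\langle X_i,\bar X_n^{(i)}\rangle,
\]
where the first sum has mean $\mathfrak{m}_4(\mu)/n$, and for the second, independence and Jensen give $\E\big[\,|\|X_i\|^2\langle X_i,\bar X_n^{(i)}\rangle|\,\big]\le\E\|X\|^3\cdot\E\|\bar X_n^{(i)}\|\le\mathfrak{m}_4(\mu)^{3/4}\big(\mathfrak{m}_2(\mu)/(n-1)\big)^{1/2}$, so $\E|\langle V_n,\bar X_n\rangle|\lesssim n^{-1/2}$. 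Summing the estimates proves $(\ref{eq: S1})$.

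For $(\ref{eq: S2})$, the key observation is that the translation $(x,y)\mapsto(x-\bar X_n,\,y-\bar Y_m)$, with $\bar Y_m:=\frac1m\sum_jY_j$, pushes $\Pi(\hat\mu_n,\hat\nu_m)$ bijectively onto $\Pi(\tilde\mu_n,\tilde\nu_m)$, so that
\[
S_2(\tilde\mu_n,\tilde\nu_m)=\inf_{\pi\in\Pi(\hat\mu_n,\hat\nu_m)}\Big\{\int-4\|x-\bar X_n\|^2\|y-\bar Y_m\|^2\,d\pi-8\Big\|\int(x-\bar X_n)(y-\bar Y_m)^\top\,d\pi\Big\|_{\mathrm{F}}^2\Big\}.
\]
Because $S_2(\tilde\mu_n,\tilde\nu_m)$ and $S_2(\hat\mu_n,\hat\nu_m)$ are infima over the \emph{same} (compact) set $\Pi(\hat\mu_n,\hat\nu_m)$, their difference is at most the supremum over $\pi$ of the absolute difference of the two objectives. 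Expanding $\|x-\bar X_n\|^2\|y-\bar Y_m\|^2-\|x\|^2\|y\|^2$, and using $\int(x-\bar X_n)(y-\bar Y_m)^\top\,d\pi=\int xy^\top\,d\pi-\bar X_n\bar Y_m^\top$ for the Frobenius term, one sees that every correction term carries a factor $\|\bar X_n\|$ or $\|\bar Y_m\|$ and is, uniformly in $\pi$, dominated by a product of a power of $\|\bar X_n\|,\|\bar Y_m\|$ and of the marginal moments $\mathfrak{m}_k(\hat\mu_n),\mathfrak{m}_k(\hat\nu_m)$ with $k\le4$ (using $\int\|x\|^2\|y\|\,d\pi\le\mathfrak{m}_4(\hat\mu_n)^{1/2}\mathfrak{m}_2(\hat\nu_m)^{1/2}$, $\big\|\int xy^\top\,d\pi\big\|_{\mathrm{F}}\le\mathfrak{m}_2(\hat\mu_n)^{1/2}\mathfrak{m}_2(\hat\nu_m)^{1/2}$, and similar Cauchy-Schwarz bounds). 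Taking expectations, the independence of the two samples factors these products, and $\E\|\bar X_n\|^2\lesssim n^{-1}$, $\E\|\bar Y_m\|^2\lesssim m^{-1}$ together with $\E\mathfrak{m}_4(\hat\mu_n)=\mathfrak{m}_4(\mu)$, $\E\mathfrak{m}_2(\hat\nu_m)=\mathfrak{m}_2(\nu)$ (etc.) make every term $\lesssim(n\wedge m)^{-1/2}$, proving $(\ref{eq: S2})$.

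Finally, for the refinement under finite eighth moments, $(\ref{eq: S1})$ already reduces the task to bounding $\frac2n\sum_i(\|X_i\|^4-\mathfrak{m}_4(\mu))$ (and its $Y$-analogue), which is an average of centered i.i.d. variables with variance at most $\mathfrak{m}_8(\mu)$, so its $L^1$-norm is $\le2(\mathfrak{m}_8(\mu)/n)^{1/2}$; adding these two bounds to $(\ref{eq: S1})$ gives $\E\big[|S_1(\tilde\mu_n,\tilde\nu_m)-S_1(\mu,\nu)|\big]\lesssim(n\wedge m)^{-1/2}$ with constant depending only on upper bounds for $\mathfrak{m}_8(\mu),\mathfrak{m}_8(\nu)$.
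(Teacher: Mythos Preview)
Your proposal is correct and follows essentially the same strategy as the paper: both use the explicit moment expansion of $S_1$ for centered measures and the coupling-translation observation for $S_2$. The only notable difference is your treatment of $\mathfrak{m}_4(\tilde\mu_n)-\mathfrak{m}_4(\hat\mu_n)$ via the leave-one-out decomposition of $\langle V_n,\bar X_n\rangle$; the paper instead bounds $\big|\|X_i-\bar X_n\|^4-\|X_i\|^4\big|\le 4\|X_i\|^3\|\bar X_n\|+6\|X_i\|^2\|\bar X_n\|^2+4\|X_i\|\|\bar X_n\|^3+\|\bar X_n\|^4$ pointwise and applies H\"older with exponents $(4/3,4)$, e.g.\ $\E\big[\|X_i\|^3\|\bar X_n\|\big]\le(\E\|X_i\|^4)^{3/4}(\E\|\bar X_n\|^4)^{1/4}\lesssim n^{-1/2}$, which sidesteps the square-integrability issue you flagged for $\|X_i\|^2X_i$ and is slightly shorter---though your argument is equally valid.
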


\begin{proof}
The final claim follows from the Cauchy-Schwarz inequality. In this proof, the notation $\lesssim$ means that an inequality holds up to a constant that depends only on upper bounds on $\mathfrak{m}_4(\mu)$ and $\mathfrak{m}_4(\nu)$. Let $\bar{X}_n = n^{-1}\sum_{i=1}^nX_i$ and $\bar{Y}_m = m^{-1}\sum_{j=1}^m Y_j$.

\underline{Proof of (\ref{eq: S1})}. Let $\Sigma_\mu$ denote the covariance matrix of $\mu$. A simple algebra yields that
\begin{equation}
\begin{split}
S_1(\mu, \nu) &= 2(\mathfrak{m}_4(\mu) + \mathfrak{m}_4(\nu)) + 2(\mathfrak{m}_2^2(\mu) + \mathfrak{m}_2^2(\nu)) \\
&\qquad + 4(\| \Sigma_\mu \|_{\mathrm{F}}^2 + \| \Sigma_\nu \|_{\mathrm{F}}^2) - 4\mathfrak{m}_2(\mu) \mathfrak{m}_2(\nu),
\end{split}
\label{eq: decomp S1}
\end{equation}
so that
\begin{align}
&\big|  S_1(\tilde{\mu}_n,\tilde{\nu}_m) - S_1(\mu,\nu) -  2(\mathfrak{m}_4(\tilde{\mu}_n) - \mathfrak{m}_4(\mu)) - 2(\mathfrak{m}_4(\tilde{\nu}_m) - \mathfrak{m}_4(\nu)) \big| \notag \\
&\le 2|(\mathfrak{m}_2(\tilde{\mu}_n)+\mathfrak{m}_2(\mu))(\mathfrak{m}_2(\tilde{\mu}_n)-\mathfrak{m}_2(\mu))|  + 2|(\mathfrak{m}_2(\tilde{\nu}_m)+\mathfrak{m}_2(\nu))(\mathfrak{m}_2(\tilde{\nu}_m)-\mathfrak{m}_2(\nu))| \notag \\
&\quad +4\|\Sigma_{\tilde{\mu}_n} - \Sigma_{\mu}\|_{\mathrm{F}} (\|\Sigma_{\tilde{\mu}_n}\|_{\mathrm{F}}+\|\Sigma_\mu\|_{\mathrm{F}}) +4\|\Sigma_{\tilde{\nu}_m} - \Sigma_{\nu}\|_{\mathrm{F}} ( \|\Sigma_{\tilde{\nu}_m}\|_{\mathrm{F}}+\|\Sigma_\nu\|_{\mathrm{F}}) \notag \\
    &\quad + 4|\mathfrak{m}_2(\tilde{\mu}_n)\mathfrak{m}_2(\tilde{\nu}_m)-\mathfrak{m}_2(\mu)\mathfrak{m}_2(\nu)|
\label{eq: S1 decomp}
\end{align}
Observe that $\E[\big(\mathfrak{m}_2(\tilde{\mu}_n) -\mathfrak{m}_2(\mu)\big)^2] \le 2\E\big[ \big(n^{-1}\sum_{i=1}^n\|X_i\|^2 -\mathfrak{m}_2(\mu)\big)^2\big] + 2\E\big[\| \bar{X}_n\|^2\big] \lesssim n^{-1}$ and $\E\big [ \|\Sigma_{\tilde{\mu}_n} - \Sigma_{\mu}\|_{\mathrm{F}}^2\big ] \le 2\E\big [ \|n^{-1}\sum_{i=1}^nX_iX_i^\top - \Sigma_{\mu}\|_{\mathrm{F}}^2\big ] + 2 \E\big [ \| \bar{X}_n\bar{X}_n^\top \|_{\mathrm{F}}^2 \big] \lesssim n^{-1}$, so that, by the Cauchy-Schwarz inequality, the expectation on the right-hand side on (\ref{eq: S1 decomp}) is $\lesssim (n \wedge m)^{-1/2}$. Finally, observe that
    \[
    \big| \|X_i\|^4 - \|X_i - \bar{X}_n\|^4 \big| \le 4\|X_i\|^3\|\bar{X}_n\| + 6\|X_i\|^2\|\bar{X}_n\|^2 + 4\|X_i\|\|\bar{X}_n\|^3 + \|\bar{X}_n\|^4.
    \]
Applying H\"{o}lder's inequality (e.g., $\E[\|X_i\|^3\|\bar{X}_n\|] \le (\E[\|X_i\|^4])^{3/4}(\E[\|\bar{X}_n\|^4])^{1/4} \lesssim n^{-1/2}$), we have $\E[|\mathfrak{m}_4(\tilde{\mu}_n) - \mathfrak{m}_4(\hat{\mu}_n)|] \lesssim n^{-1/2}$. Likewise, we have $\E[|\mathfrak{m}_4(\tilde{\nu}_m) - \mathfrak{m}_4(\hat{\nu}_m)|] \lesssim m^{-1/2}$.

\underline{Proof of (\ref{eq: S2})}. By definition,
\[
\begin{split}
    &\E\left [ |S_2(\tilde{\mu}_n,\tilde{\nu}_m)-S_2(\hat{\mu}_n,\hat{\nu}_m) \big| \right ] \\
    &\le 4\E\left[ \sup_{\pi \in \Pi(\hat{\mu}_n, \hat{\nu}_m)}\left|\int \left(\|x - \bar{X}_n\|^2\|y - \bar{Y}_m\|^2 - \|x\|^2\|y\|^2\right)\,d\pi(x,y) \right| \right] \\
    &\quad +8\E\left[\sup_{\pi\in\Pi( \hat{\mu}_n,\hat{\nu}_m )}\left|  \sum_{i,j}\left(\int  x_iy_j \,d\pi(x,y)\right)^2 \mspace{-5mu}- \mspace{-3mu}\left(\int  (x_i-\bar{X}_{n,i})(y_j-\bar{Y}_{m,j}) \,d\pi(x,y)\right)^2 \right|\right] \\
    &=: 4I + 8II.
\end{split}
\]
For the first term, expanding $\|x - \bar{X}_n\|^2\|y - \bar{Y}_m\|^2$ gives
\[
\begin{split}
    \|x - \bar{X}_n\|^2\|y - \bar{Y}_m\|^2 &= \|x\|^2\|y\|^2 - 2\|x\|^2\langle y, \bar{Y}_m\rangle + \|x\|^2\|\bar{Y}_m\|^2 \\ &\qquad - 2\langle x, \bar{X}_n \rangle \|y\|^2 + 4\langle x, \bar{X}_n \rangle \langle y, \bar{Y}_m \rangle - 2\langle x, \bar{X}_n \rangle \|\bar{Y}_m\|^2 \\ &\qquad + \|\bar{X}_n\|^2 \|y\|^2 - 2\|\bar{X}_n\|^2 \langle y, \bar{Y}_m \rangle + \|\bar{X}_n\|^2 \|\bar{Y}_m\|^2,
\end{split}
 \]
 where $\langle \cdot,\cdot \rangle$ denotes the Euclidean inner product. 
 For any $\pi \in \Pi(\hat{\mu}_n, \hat{\nu}_m)$, we have $\int \langle x, \bar{X}_n \rangle \|\bar{Y}_m\|^2 d\pi =  \int \|\bar{X}_n\|^2 \langle y, \bar{Y}_m \rangle d\pi = \|\bar{X}_n\|^2\|\bar{Y}_m\|^2$,
 \[
\begin{split}
&\left| \int \langle x, \bar{X}_n \rangle \|y\|^2 \,d\pi \right| \le \|\bar{X}_n\| \sqrt{\mathfrak{m}_2(\hat{\mu}_n)\mathfrak{m}_4(\hat{\nu}_m)}, \\
    &\left| \int \langle y, \bar{Y}_m \rangle \|x\|^2 \,d\pi \right| \le \|\bar{Y}_m\| \sqrt{\mathfrak{m}_2(\hat{\nu}_m)\mathfrak{m}_4(\hat{\mu}_n)}, \quad \text{and} \\
    &\left | \int \langle x, \bar{X}_n \rangle \langle y, \bar{Y}_m \rangle \,d\pi \right| \le \|\bar{X}_n\|\|\bar{Y}_m\| \sqrt{\mathfrak{m}_2(\hat{\mu}_n)\mathfrak{m}_2(\hat{\nu}_m)}. 
    \end{split}
\]
As such, we have $\E[I] \lesssim (n \wedge m)^{-1/2}$.

 For the term $II$, we have $II \le \sqrt{T_1 T_2}$ with
\[
\begin{split}
T_1 &:= \E\left[\sup_{\pi\in\Pi( \hat{\mu}_n,\hat{\nu}_m )}   \sum_{i,j}\left(\int   \big( x_i \bar{Y}_{m,j} + \bar{X}_{n,i} y_j - \bar{X}_{n,i} \bar{Y}_{m,j} \big)   \,d \pi(x,y)\right)^2\right], \quad \text{and} \\
T_2 &:=\E\left[\sup_{\pi\in\Pi(\hat{\mu}_n,\hat{\nu}_m )}  \sum_{i,j}\left(\int    \big(2x_iy_j  -x_i \bar{Y}_{m,j}  - \bar{X}_{n,i} y_j + \bar{X}_{n,i}\bar{Y}_{m,j}\big) \,d \pi(x,y)\right)^2 \right ].
\end{split}
\]
Since $\int \big( x_i \bar{Y}_{m,j} + \bar{X}_{n,i} y_j - \bar{X}_{n,i} \bar{Y}_{m,j} \big) \, d\pi
   = \bar{X}_{n,i} \bar{Y}_{m,j}$ and $\int    \big(2x_iy_j  -x_i \bar{Y}_{m,j}  - \bar{X}_{n,i} y_j + \bar{X}_{n,i}\bar{Y}_{m,j}\big) \,d \pi = \int x_i y_j \,d\pi - \bar{X}_{n,i}\bar{Y}_{m,j}$, we have
\[
\begin{split}
    T_1 &= \E\left[ \sum_{i,j} (\bar{X}_{n,i} \bar{Y}_{m,j})^2 \right] 
    = \E\left[ \left(\sum_i \bar{X}_{n,i}^2\right)\left(\sum_j \bar{Y}_{m,j}^2\right) \right] 
    = \frac{\mathfrak{m}_2(\mu)\mathfrak{m}_2(\nu)}{nm}, \quad \text{and} \\
    T_2 
    &= \E\left[\sup_{\pi}  \sum_{i,j}\left(2\int x_i y_j \,d\pi - \bar{X}_{n,i}\bar{Y}_{m,j}\right)^2 \right ] \\
    &\le 8 \E\left[\sup_{\pi} \sum_{i,j} \left(\int x_i y_j \,d\pi\right)^2\right] + 2 \E\left[\sum_{i,j}(\bar{X}_{n,i}\bar{Y}_{m,j})^2\right] \\
    &\le 8 \mathfrak{m}_2(\mu)\mathfrak{m}_2(\nu) + \frac{2\mathfrak{m}_2(\mu)\mathfrak{m}_2(\nu)}{nm}.
\end{split}
\]
This completes the proof. 
\end{proof}

\subsection{Proof of Proposition \ref{prop: semibounded}}

The proof is a modification to that of Theorem \ref{thm: upper bounds}. First, we observe that, when $(X,Y)$ is supported in $B_\calX(0,r) \times B_\calY(0,1)$,  
\[
\E[c_A(X,Y)] = r^2\E[c_{A/r}(X/r,Y)],
\]
and $(X/r,Y)$ is supported in $B_\calX(0,1) \times B_\calY(0,1)$.
As such, one has 
\[
\sup_{\substack{(\mu,\nu) \in \calP(B_{\calX}(0,r)) \times \calP(B_{\calY}(0,1)) \\ \|A\|_{\op} \le R}}\E\left [ \big|T_{c_{A}}(\hat{\mu}_n,\hat{\nu}_m)-T_{c_A}(\mu,\nu)\big| \right ] \\
\le \kappa r^2 \varphi_{n,m}.
\]

The modified cost is now replaced with
\[
\bar{c}_A (x,y) = c_A(x,y) + (4+16R)\|x\|^2 + 16R
\]
which is nonnegative on $\calX \times B_{\calY}(0,1)$ and uppper bounded by
\[
\bar{c}_A(x,y) \le \underbrace{(8+32R)\|x\|^2}_{=:\bar{c}_\calX(x)} + \underbrace{32R}_{=:\bar{c}_\calY(y)}.
\] 
Applying Theorem \ref{thm: simplified} with $\calY = B_{\calY}(0,1)$ and $\calB_\calY(r) = B_{\calY}(0,1)$ for all $r \ge 1$, we have, for given $q \in (2,\infty)$ and $R, M \ge 1$,
\[
\sup_{\substack{(\mu,\nu) \in \calP_{2q}(\calX) \times \calP(B_{\calY}(0,1)) \\ \mathfrak{m}_{2q}(\mu) \le M, \|A\|_{\op} \le R}}\E\left [ \big|T_{c_{A}}(\hat{\mu}_n,\hat{\nu}_m)-T_{c_A}(\mu,\nu)\big| \right ] \\
\lesssim \varphi_{n,m},
\]
up to a constant that depends only on $d_x,d_y,q,M$ and $R$. 

Observe that the inequalities in (\ref{eq: S1}) and (\ref{eq: S2}) hold  under finite $4$-th moments. The extra $n^{\frac{2-q}{q}}$ factor comes from applying the von Bahr-Esssen inequality \cite{von1965inequalities} to $n^{-1}\sum_{i=1}^n (\|X_i\|^4-\mathfrak{m}_4(\mu))$. The rest of the proof is analogous to Theorem \ref{thm: upper bounds}.
\qed

\subsection{Proof of Proposition \ref{prop: heavy tail}}
For $\alpha \in (1,2)$, let $\mu$ be the distribution on $\R$ such that
\[
\mu([x,\infty)) = \mu((-\infty,-x]) = \frac{x^{-4\alpha}}{2}, \ x \ge 1,
\]
which satisfies (\ref{eq: alpha moment}). 
Lemma \ref{lem: S1S2} yields
\[
\begin{split}
\E\left [ \left|S_1(\tilde{\mu}_n,\tilde{\nu}_m) - S_1(\mu,\nu)  - \frac{2}{n}\sum_{i=1}^n (X_i^4 - \E[X_i^4]) \right | \right] &\lesssim n^{-1/2},  \\
\E\left [ \big|S_2(\tilde{\mu}_n,\tilde{\nu}_m)-S_2(\hat{\mu}_n,\hat{\nu}_m) \big| \right ] &\lesssim n^{-1/2}. 
\end{split}
\]
Furthermore, from the proof of Proposition \ref{prop: semibounded}, for any $q \in (2,2\alpha)$, 
\[
\E\left [ \big|S_2(\hat{\mu}_n,\hat{\nu}_n) - S_2(\mu,\nu) \big| \right ] \lesssim n^{-\frac{1}{2}}\sqrt{\log n} + n^{\frac{3-2q}{2q}} \log n.
\]
As such, we have
\[
\E\left [ \big| D(\hat{\mu}_n,\hat{\nu}_n) - D(\mu,\nu) \big |\right ] - \E \left [\left | \frac{2}{n}\sum_{i=1}^n (X_i^4 - \E[X_i^4]) \right | \right ] \gtrsim- n^{-\frac{1}{2}}\sqrt{\log n} - n^{\frac{3-2q}{2q}} \log n.
\]
By the symmetrization inequality (cf. Lemma 2.3.6 in \cite{van1996weak}), the second term on the left-hand side is
\[
\ge\E \left [\left | \frac{1}{n}\sum_{i=1}^n \epsilon_i(X_i^4 - \E[X_i^4]) \right | \right ],
\]
where $\varepsilon_1,\dots,\varepsilon_n$ are i.i.d. Rademacher random variables (i.e., $\Prob(\epsilon_i = \pm 1)=1/2$) independent of $X_1,\dots,X_n$. The final claim (\ref{eq: lower bound}) follows from Lemma \ref{lem: stable} below and  $n^{\frac{3-2q}{2q}} = o(n^{\frac{1-\alpha}{\alpha}})$ for $q$ sufficiently close to $2\alpha$. \qed

\begin{lemma}
\label{lem: stable}
Consider the setting above and set $W_i = \epsilon_i(X_i^4-\E[X_i^4])$. For $S_n = \sum_{i=1}^n W_i$, we have $S_n/n^{1/\alpha} \stackrel{d}{\to} Z_\alpha$ as $n \to \infty$, where $\stackrel{d}{\to}$ denotes convergence in distribution and $Z_\alpha$ is a symmetric stable random variable with stability index $\alpha$. Furthermore, $\liminf_{n \to \infty} n^{-1/\alpha} \E[|S_n|] \ge \E[|Z_\alpha|] > 0$. 
\end{lemma}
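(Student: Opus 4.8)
The plan is to recognize $S_n$ as a sum of i.i.d.\ mean-zero random variables in the domain of attraction of a symmetric $\alpha$-stable law with norming constants exactly $n^{1/\alpha}$, and then to upgrade the resulting distributional convergence to convergence of first absolute moments by a truncation argument. First I would record the properties of a single summand $W_1=\epsilon_1(X_1^4-\E[X_1^4])$ (the $W_i$ being i.i.d.\ since the pairs $(\epsilon_i,X_i)$ are). As $\epsilon_1$ is a symmetric sign independent of $X_1$, the law of $W_1$ is symmetric about $0$; in particular $\E[W_1]=0$ and $\phi(s):=\E[e^{isW_1}]=\E[\cos(sW_1)]$ is real. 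Writing $c:=\E[X_1^4]$, which is finite because $\E[|X_1|^{2q}]<\infty$ for $q<2\alpha$ and $2<2\alpha$, the prescribed tail of $\mu$ gives $\Prob(X_1^4>s)=\Prob(|X_1|>s^{1/4})=s^{-\alpha}$ for $s\ge1$, so, using $X_1^4\ge0$, we obtain the \emph{exact} identity $\Prob(|W_1|>t)=\Prob(X_1^4>t+c)=(t+c)^{-\alpha}$ for $t$ large. Hence $\Prob(|W_1|>t)\sim t^{-\alpha}$ with no slowly varying correction, $\E[W_1^2]=\infty$, and $W_1$ lies in the domain of attraction of a symmetric $\alpha$-stable law with norming sequence $a_n$ fixed by $n\Prob(|W_1|>a_n)\to1$, i.e.\ $a_n=n^{1/\alpha}$.

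For the distributional convergence I would work directly with characteristic functions, which is clean here because $W_1$ is symmetric. Writing $G(t):=\Prob(|W_1|>t)$ and integrating by parts,
\[
1-\phi(s)=\E\big[1-\cos(s|W_1|)\big]=s\int_0^\infty \sin(st)\,G(t)\,dt.
\]
Substituting $u=st$ and using $G(t)\sim t^{-\alpha}$ together with dominated convergence (a dominating function $\min(u,1)\,u^{-\alpha}$ is integrable on $(0,\infty)$ precisely because $1<\alpha<2$) gives $1-\phi(s)\sim s^{\alpha}\int_0^\infty \sin(u)\,u^{-\alpha}\,du$ as $s\downarrow0$; a further integration by parts identifies $\int_0^\infty \sin(u)\,u^{-\alpha}\,du=\alpha\int_0^\infty(1-\cos u)\,u^{-\alpha-1}\,du=\mathsf{c}_\alpha$, so $1-\phi(s)\sim\mathsf{c}_\alpha|s|^\alpha$. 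Therefore
\[
\E\big[e^{itS_n/n^{1/\alpha}}\big]=\phi\!\left(t/n^{1/\alpha}\right)^{n}=\Big(1-\mathsf{c}_\alpha|t|^\alpha n^{-1}+o(n^{-1})\Big)^{n}\longrightarrow e^{-\mathsf{c}_\alpha|t|^\alpha},
\]
which is the characteristic function of the stated $Z_\alpha$, and L\'evy's continuity theorem yields $S_n/n^{1/\alpha}\stackrel{d}{\to}Z_\alpha$. Alternatively one may simply invoke the generalized central limit theorem (e.g.\ Chapter~9 of \cite{breiman1992probability}) to obtain both the convergence and the identification of $a_n$ and of the limit.

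For the moment bound, the continuous mapping theorem gives $|S_n|/n^{1/\alpha}\stackrel{d}{\to}|Z_\alpha|$. Since $1<\alpha<2$, a symmetric $\alpha$-stable law has finite absolute moments of every order strictly below $\alpha$, so $\E[|Z_\alpha|]<\infty$; and $\E[|Z_\alpha|]>0$ since $Z_\alpha$ is non-degenerate ($\mathsf{c}_\alpha>0$, so its characteristic function is not identically $1$). For each fixed $K>0$ the map $y\mapsto\min(y,K)$ is bounded and continuous, so $\E\big[\min(|S_n|/n^{1/\alpha},K)\big]\to\E[\min(|Z_\alpha|,K)]$, whence $\liminf_{n}n^{-1/\alpha}\E[|S_n|]\ge\E[\min(|Z_\alpha|,K)]$; letting $K\to\infty$ and invoking monotone convergence gives $\liminf_{n}n^{-1/\alpha}\E[|S_n|]\ge\E[|Z_\alpha|]>0$, which is the second assertion.

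I expect the genuinely delicate points to be two. First, pinning the norming constant down to \emph{exactly} $n^{1/\alpha}$ and the limiting characteristic function down to \emph{exactly} $e^{-\mathsf{c}_\alpha|t|^\alpha}$: this is where the exact Pareto tail $\Prob(|W_1|>t)=(t+c)^{-\alpha}$ (so that no slowly varying factor appears) and the explicit constant produced by the characteristic-function asymptotics are essential. Second, passing from convergence in distribution to convergence of the first absolute moment, which is not automatic and is precisely what the truncation/Fatou argument above settles, using the integrability of $|Z_\alpha|$. The remaining ingredients — the two integrations by parts and the tail computation — are routine.
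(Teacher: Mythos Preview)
Your proof is correct and follows essentially the same route as the paper's: both establish $1-\phi(s)\sim\mathsf{c}_\alpha|s|^\alpha$ as $s\to0$ and conclude via characteristic functions, then pass to the first-moment bound by a Fatou-type argument. The only cosmetic differences are that the paper computes $1-\phi(t)$ directly from the density of $X_1^4$ (and applies a generalized dominated convergence theorem after splitting the integral) whereas you first pass to the survival function $G(t)=\Prob(|W_1|>t)$ via integration by parts, and that the paper invokes Skorohod representation plus Fatou while your truncation argument $\E[\min(|S_n|/n^{1/\alpha},K)]\to\E[\min(|Z_\alpha|,K)]$ achieves the same end more elementarily.
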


\begin{proof}
Let $\phi(t)$ denote the characteristic function of $W_i$, i.e., $\phi(t) = \E[e^{itW_i}]$ with $i = \sqrt{-1}$. Since $W_i$ is symmetric, for $t > 0$, 
\[
\begin{split}
1-\phi(t) &= \E[1-\cos (tW_i)] \\
&= \E[1-\cos(t(X_i^4-\mathfrak{m}_4))] \quad (\mathfrak{m}_4:=\E[X_i^4] = {\textstyle \frac{\alpha}{\alpha-1}}) \\
&= \alpha\int_1^\infty \{ 1-\cos (t(x-\mathfrak{m}_4)) \} x^{-\alpha-1} \, dx \\
&=t^\alpha \alpha \int_t^\infty \{1-\cos(x-t\mathfrak{m}_4)\} x^{-\alpha-1} \, dx. 
\end{split}
\]
Using the elementary inequality $1-\cos x \le 2 \wedge (x^2/2)$ and the dominated convergence theorem, we see that 
\[
\lim_{t \to 0+} \alpha \int_t^\infty \{ 1-\cos(x-t\mathfrak{m}_4) \}x^{-\alpha-1} \, dx = \alpha \int_0^\infty (1-\cos x)x^{-\alpha-1} \, dx =: \mathsf{c}_{\alpha}.
\]
More precisely, splitting the integral into $\int_t^{1}$ and $\int_1^\infty$, we have
\[
\lim_{t \to 0+}\int_1^\infty \{ 1-\cos(x-t\mathfrak{m}_4) \}x^{-\alpha-1} \, dx = \int_1^\infty (1-\cos x)x^{-\alpha-1} \, dx
\]
by the dominated convergence theorem. To handle the other integral, set $f_t (x) := \{ 1-\cos(x-t\mathfrak{m}_4) \}x^{-\alpha-1} \mathbbm{1}_{[t,1]}(x)$ on $(0,1]$, which can be upper-bounded by 
\[
\frac{x^{-\alpha-1}\mathbbm{1}_{[t,1]}(x)}{2} (x^2 +2 t\mathfrak{m}_4x + t^2\mathfrak{m}_4^2) =: g_t(x).
\]
As $t \to 0+$, $f_t(x) \to (1-\cos x)x^{-\alpha-1}, g_t(x) \to x^{-\alpha+1}/2 =: g(x)$ on $(0,1]$, and 
\[
\int_0^1 g_t (x) \, dx = \frac{1}{2} \left \{ \int_t^1 x^{-\alpha+1} \, dx + \frac{2\mathfrak{m}_4t(t^{-\alpha+1}-1)}{\alpha-1} + \frac{\mathfrak{m}_4^2t^2(t^{-\alpha}-1)}{\alpha} \right \}\to \int_0^1 g(x) \, dx < \infty. 
\]
As such, we may apply the generalized dominated convergence theorem (cf. Problem 4.3.12 in \cite{Dudley_2002}) to conclude that 
\[
\lim_{t \to 0+}\int_t^1 \{ 1-\cos(x-t\mathfrak{m}_4) \}x^{-\alpha-1} \, dx = \int_0^1 (1-\cos x)x^{-\alpha-1} \, dx. 
\]
Hence,
\[
\E[e^{itS_n/n^{1/\alpha}}] = \{\phi(t/n^{1/\alpha})\}^n = \{ 1-\{ 1-\phi(t/n^{1/\alpha})\} \}^n \to e^{-\mathsf{c}_{\alpha} t^\alpha}. 
\]
For $t < 0$, we have $\lim_{n \to \infty}\E[e^{itS_n/n^{1/\alpha}}] = e^{-\mathsf{c}_{\alpha} |t|^\alpha}$. The final claim follows from the Skorohod representation and Fatou's lemma. 
\end{proof}

\subsection{Proof of Proposition \ref{prop: semidiscrete}}

Before starting the proof, we first review useful facts about semidiscrete OT. See Chapter 5 of \cite{peyre2019computational} for a background on semidiscrete OT.
Abusing notation, we will identify any probability measure $\nu' = \sum_{j=1}^\ell \nu_j'\delta_{y_j}$ supported in $\calY_0 := \{ y_1,\dots,y_\ell \}$ with the simplex vector $(\nu_1',\dots,\nu_\ell')^\top$ and any function $g$ on $\calY_0$ with the vector $(g_1,\dots,g_\ell)^\top := (g(y_1),\dots,g(y_\ell))^\top$. With this identification, for any pair of marginals $\mu'$ and $\nu'$ supported in $\calX$ and $\calY_0$, respectively, with $\mu'$ having a finite second moment, the following semidual form holds:
\begin{equation}
T_{c_A}(\mu',\nu') =  \sup_{g \in \R^\ell} \left \{ g^\top \nu' +  \int g^{c_A} \, d\mu' \right \},
\label{eq: semidual}
\end{equation}
where $g^{c_A}(x) := \min_{1 \le j \le \ell} \{ c_A(x,y_j) - g_j \}$ for $x \in \calX$, and the supremum in (\ref{eq: semidual}) is attained. 
Since adding the same constant to all $g_j$ does not change the objective in (\ref{eq: semidual}), we may assume without loss of generality that $\sum_{j=1}^\ell g_j = 0$ in (\ref{eq: semidual}).  
Let $g \in \R^\ell$ be any optimizer for (\ref{eq: semidual}) subject to the preceding constraint. 
Set $\underline{\nu}':=\min_{1 \le j \le \ell} \nu_j'$. 
Assuming, without loss of generality, that $g_1 = \min_{1 \le j \le \ell} g_j$ and $g_\ell = \max_{1 \le j \le \ell} g_j$, we have 
\[
\begin{split}
T_{c_A}(\mu',\nu') &\le \underline{\nu}'g_1 + (1-\underline{\nu}')  g_\ell + \int (c_A(x,y_\ell) - g_\ell) \, d\mu'(x) \\
&\le \underline{\nu}'(g_1-g_\ell) + \max_{1 \le j \le \ell} \int c_A(x,y_j) \, d\mu'(x),
\end{split}
\]
which yields that
\[
g_\ell - g_1 \le (1/\underline{\nu}') \left \{  \max_{1 \le j \le \ell}\int c_A(x,y_j) \, d\mu'(x) - T_{c_A}(\mu',\nu') \right \},
\]
provided that $\underline{\nu} > 0$. 
Since $|c_A(x,y)| \le (4+16\|A\|_{\op})\|x\|^2+16\|A\|_{\op}$ on $\calX \times \calY_0$ (recall that $\calY_0 \subset B_\calY(0,1)$), we further have
\[
g_\ell - g_1 \le (1/\underline{\nu}')\big \{ (8+32\|A\|_{\op})\mathfrak{m}_{2}(\mu') + 32 \|A\|_{\op}\big \} =: (1/\underline{\nu}')K_{\mathfrak{m}_{2}(\mu'),\|A\|_{\op}}.
\]
Using $\sum_{j=1}^\ell g_j = 0$, we conclude that $|g_j| \le (1-\ell^{-1})(1/\underline{\nu}')K_{\mathfrak{m}_{2}(\mu'),\|A\|_{\op}}$ for all $j$.

We are now in position to start the proof of Proposition \ref{prop: semidiscrete}.

\begin{proof}[Proof of Proposition \ref{prop: semidiscrete}]
Set $\underline{\nu} := \min_{1 \le j \le \ell}\nu_j > 0$.
In this proof, the notation $\lesssim$ means that an inequality holds up to a constant that depends only on $d_x,d_y,q, M, \ell$ and $\underline{\nu}$.
Observe that $\hat{\nu}_m= \sum_{j=1}^\ell \hat{\nu}_{m,j} \delta_{y_j}$ with $\hat{\nu}_{m,j}:= m^{-1}\sum_{i=1}^m \mathbbm{1}_{\{ Y_i = y_j \}}$.

We first show that for given $R \ge 1$, there exists a constant $\kappa > 0$ that depends only on $d_x,d_y, \ell,\underline{\nu}$ and $R$ such that 
\begin{equation}
\sup_{\substack{\mu \in \calP (B_\calX(0,r)) \\ \|A\|_{\op} \le R}}\E\left [ \big|T_{c_{A}}(\hat{\mu}_n,\hat{\nu}_m)-T_{c_A}(\mu,\nu)\big| \right ] \\
\le \kappa r^2 (n \wedge m)^{-1/2}
\label{eq: semidiscrete}
\end{equation}
for all $r \ge 1$ and $n,m \ge 2$.
For now, suppose that the hidden constant in $\lesssim$ may also depend on $R$. 
As before, by scaling, it suffices to establish (\ref{eq: semidiscrete}) for $r=1$. Pick any $\mu \in \calP(B_\calX(0,1))$ and any $A$ with $\|A\|_{\op} \le R$. In this case, $K_{\mathfrak{m}_2(\mu),\|A\|_{\op}} \vee K_{\mathfrak{m}_2(\hat{\mu}_n),\|A\|_{\op}}\lesssim 1$. As such, for some constant $K \lesssim 1$, whenever the event $ \mathcal{E}_m := \{\min_{1 \le j \le \ell}\hat{\nu}_{m,j} \ge \underline{\nu}/2 \}$ holds, we have
\[
\begin{split}
T_{c_A}(\hat{\mu}_n,\hat{\nu}_m) &=  \sup_{g \in \R^\ell:\| g \| \le K} \left \{ g^\top \hat{\nu}_m +  \int g^{c_A} \, d\hat{\mu}_n \right \} \quad  \text{and} \\
T_{c_A}(\mu,\nu) &=  \sup_{g \in \R^\ell:\| g \| \le K} \left \{ g^\top \nu +  \int g^{c_A} \, d\mu \right \}.
\end{split}
\]
 Defining the function class $\calF_A := \{ g^{c_A} : \|g\| \le K \}$, we have
\[
\big|T_{c_{A}}(\hat{\mu}_n,\hat{\nu}_m)-T_{c_A}(\mu,\nu)\big| \le K \| \hat{\nu}_m -\nu \| + \sup_{f \in \mathcal{F}_A} \left | \int f \, d(\hat{\mu}_n-\mu) \right | 
\]
on the event $\mathcal{E}_m$.
Observe that $\E[\| \hat{\nu}_m -\nu \|] \lesssim m^{-1/2}$ and that $\| \cdot \|_{\infty}$-covering numbers for $\calF_A$ are no greater than those for $\{ g \in \R^\ell : \| g \| \le K \}$. As such, applying Theorem 2.14.1 in \cite{van1996weak}, we have
\[
\E \left [ \sup_{f \in \mathcal{F}_A} \left | \int f \, d(\hat{\mu}_n-\mu) \right | \right ] \lesssim n^{-1/2}.
\]
Outside the event $\mathcal{E}_m$, we use the trivial inequality $|T_{c_{A}}(\hat{\mu}_n,\hat{\nu}_m)| \vee |T_{c_A}(\mu,\nu)| \lesssim 1$ (recall that $\mu$ is assumed to be supported in $B_\calX(0,1)$) and the fact that $\Prob(\mathcal{E}_m^c) \lesssim m^{-1}$, say. Combining these estimates, we conclude that  
\[
\E \left [ \big|T_{c_{A}}(\hat{\mu}_n,\hat{\nu}_m)-T_{c_A}(\mu,\nu)\big| \right] \lesssim (n \wedge m)^{-1/2},
\]
which verifies (\ref{eq: semidiscrete}) with a suitable $\kappa$.

Next, using the bound $|c_A(x,y)| \le (4+16\|A\|_{\op})\|x\|^2+16\|A\|_{\op}$ and applying Theorem \ref{thm: simplified}, we have
\[
\sup_{\substack{\mu:~\mathfrak{m}_{2q} (\mu) \le M \\ \|A\|_{\op} \le R}}\E\left [ \big|T_{c_{A}}(\hat{\mu}_n,\hat{\nu}_m)-T_{c_A}(\mu,\nu)\big| \right ] \\
\lesssim (n \wedge m)^{-1/2}.
\]

Now, pick any $\mu$ with $\mathfrak{m}_{2q}(\mu) \le M$. Arguing as in the proof of Theorem \ref{thm: upper bounds}, combined with applying the von Bahr-Esssen inequality \cite{von1965inequalities} to $n^{-1}\sum_{i=1}^n (\|X_i\|^4-\mathfrak{m}_4(\mu))$, we have, for some $R \lesssim 1$, 
\begin{multline*}
    \E \left[ \big| D(\hat{\mu}_n,\hat{\nu}_m) - D(\mu,\nu) \big |\right ] \lesssim (n \wedge m)^{-1/2} + n^{\frac{2-q}{q}} \\
 +  \underbrace{\E\left [ \sup_{\|A\|_{\op} \le R} \left | \int g_{A}^{c_A} \, d(\hat{\mu}_n -\mu) \right|\right ]}_{=:I}
 + \underbrace{\E\left [ \sup_{\|A\|_{\op} \le R} \left | \int g_A \, d(\hat{\nu}_m -\nu) \right|\right ]}_{=:II},
\end{multline*}
where $g_A$ is any optimizer for the semidual problem (\ref{eq: semidual}) with $(\mu',\nu') = (\mu,\nu)$ and $g_A^{c_A}$ is its $c_A$-transform.
From the discussion before the proof, we have $\| g_A \| \le \underline{\nu}^{-1}\sqrt{\ell} K_{M+1,R}$ for all $A$ with $\|A\|_{\op} \le R$, so we have $II \lesssim m^{-1/2}$. On the other hand, the function class $\{ g_{A}^{c_A} : \|A\|_{\op} \le R\}$ has an envelope $\lesssim \|x\|^2+1$ and is a Vapnik-Chervonenkis (VC) subgraph class with VC index $\lesssim 1$ (cf. Chapter 2 in \cite{van1996weak} for VC subgraph classes of functions and VC indices). To see the latter, observe that for each $j$, the function class
\[
\left \{ x \mapsto -4\|x\|^2\|y_j\|^2 - 32x^\top A y_j - g_{A,j} : \| A \|_{\op} \le R \right \}
\]
is contained in the $(d_x+2)$-dimensional vector space of functions spanned by $ 1,x_1,\dots,x_{d_x},\|x\|^2$ and hence is VC-subgraph with index at most $d_x+4$ by Lemma 2.6.15 in \cite{van1996weak}. The fact that the function class $\{ g_{A}^{c_A} : \|A\|_{\op} \le R\}$ is VC-subgraph with index $\lesssim 1$ then follows from Lemma 2.6.18 (i) in \cite{van1996weak} (see also Theorem 1 in \cite{van2009note}). 
As such, applying Theorems 2.6.7 and 2.14.1 in \cite{van1996weak}, we have $I \lesssim n^{-1/2}$. This finishes the proof.
\end{proof}

\subsection{Proof of Theorem \ref{thm: lower bound}}
We first define some notations. Let $P,Q$ be probability measures defined on a common measurable space.
\begin{itemize}
\item (Total variation) 
\[
d_{\mathrm{TV}}(P,Q) := \sup_{A} |P(A)-Q(A)|;
\]
\item ($\chi^2$-divergence) 
\[
\chi^2(P,Q):=
\begin{cases}
    \int \left ( \frac{dP}{dQ} - 1\right )^2 \, dQ & \text{if $P \ll Q$}, \\
    \infty & \text{otherwise}.
\end{cases}
\]
\end{itemize}
We will use the following properties of the total variation and $\chi^2$-divergence:
\begin{align}
d_{\mathrm{TV}}(P,Q) &= \inf_{\pi \in \Pi(P,Q)} \Prob_{(X,Y) \sim \pi} (X \ne Y), \label{eq: dual TV} \\
d_{\mathrm{TV}}(P,Q) &\le \sqrt{\chi^2(P,Q)}, \label{eq: TVchi2}\\
\chi^2(P^n,Q^n) &= \big (1+\chi^2(P,Q)\big)^n-1, \label{eq: chi2}
\end{align}
where $P^n = \otimes_{i=1}^n P$ and $Q^n = \otimes_{i=1}^n Q$. The last two properties follow directly from the definitions; see \cite[p.~86 and p.~90]{Tsybakov2009}. The first property is also well-known.

We first prove two auxiliary lemmas that will be used in the proof of Theorem \ref{thm: lower bound}.
\begin{lemma}
\label{eigenval}
    There exists a constant $c > 0$ depending on $d$ only such that, for every sufficiently large positive integer $k$, there exists a set $\{x_1, \ldots, x_k\} \subset B_{\R^d}(0,1)$ such that $\norm{x_i - x_j}\ge ck^{-1/d}$ for all $i \ne j$ and the covariance matrix $\Sigma_{\mu}$ of the distribution $\mu = k^{-1}\sum_{i=1}^k \delta_{x_i}$ satisfies $\lambda_{\min}(\Sigma_{\mu}) \ge c$.
\end{lemma}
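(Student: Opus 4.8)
The goal is to build a well-separated point set in the unit ball whose empirical measure has a well-conditioned covariance matrix. The plan is to take a cube strictly inside the ball, tile it by a regular grid of roughly $k$ cells, and place the $x_i$ at the cell centers; this immediately gives the separation $\norm{x_i-x_j}\gtrsim k^{-1/d}$. The issue is that the empirical covariance of a perfect grid is essentially that of the uniform distribution on the cube, which is nondegenerate, but one must handle the fact that $k$ need not be a perfect $d$-th power and that a few grid points might have to be dropped or the grid truncated, which could in principle distort the covariance. I would absorb this by choosing $k$ large, building a grid of $\lfloor k^{1/d}\rfloor^d \le k$ points inside a slightly shrunk cube, and then placing the remaining $k - \lfloor k^{1/d}\rfloor^d$ points (a vanishing fraction) anywhere in the cube; since that fraction tends to $0$, the covariance matrix is a small perturbation of the full-grid one.

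In more detail, here are the steps I would carry out. First, fix the cube $C = [-1/(2\sqrt d), 1/(2\sqrt d)]^d \subset B_{\R^d}(0,1)$ and let $N = N(k) := \lfloor k^{1/d}\rfloor$, so $N^d \le k < (N+1)^d$. Partition $C$ into $N^d$ congruent subcubes of side $\ell_N := 1/(N\sqrt d)$ and let $z_1,\dots,z_{N^d}$ be their centers; then $\norm{z_i - z_j} \ge \ell_N \gtrsim k^{-1/d}$. Second, compute the covariance of $\rho_N := N^{-d}\sum_i \delta_{z_i}$: by symmetry its mean is $0$ and $\Sigma_{\rho_N} = \sigma_N^2 I_d$ where $\sigma_N^2$ is the one-dimensional variance of an equally spaced $N$-point grid on an interval of length $1/\sqrt d$, namely $\sigma_N^2 = \frac{1}{12 d}(1 - N^{-2}) \to \frac{1}{12 d}$; in particular $\sigma_N^2 \ge \frac{1}{24 d}$ for $N \ge 2$. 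Third, define $\mu = k^{-1}\sum_{i=1}^k \delta_{x_i}$ by taking $x_i = z_i$ for $i \le N^d$ and $x_i \in C$ arbitrary (say $x_i = z_1$) for $N^d < i \le k$, and write $\mu = \frac{N^d}{k}\rho_N + \frac{k - N^d}{k}\eta$ for the point mass $\eta$ supported in $C$. Since $k - N^d < (N+1)^d - N^d \lesssim N^{d-1} = o(k)$, we have $\frac{N^d}{k} = 1 - o(1)$, so for $k$ large enough $\frac{N^d}{k} \ge 1/2$.

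Fourth, bound $\lambda_{\min}(\Sigma_\mu)$ from below. Using that for a mixture $\mu = \lambda P + (1-\lambda)R$ one has $\Sigma_\mu \succeq \lambda \Sigma_P$ in the PSD order (this follows from the law of total variance, since $\Sigma_\mu = \lambda\Sigma_P + (1-\lambda)\Sigma_R + \lambda(1-\lambda)(m_P - m_R)(m_P-m_R)^\top \succeq \lambda \Sigma_P$), we get $\Sigma_\mu \succeq \frac{N^d}{k}\Sigma_{\rho_N} = \frac{N^d}{k}\sigma_N^2 I_d \succeq \frac{1}{2}\cdot\frac{1}{24 d} I_d$, hence $\lambda_{\min}(\Sigma_\mu) \ge \frac{1}{48 d}$. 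Taking $c := \min\{\ell_N\text{-constant}, \frac{1}{48 d}\}$ (explicitly a constant depending only on $d$, after noting $\ell_N = \frac{1}{N\sqrt d} \ge \frac{1}{k^{1/d}\sqrt d}$, i.e.\ the separation constant is $1/\sqrt d$) and $k$ large enough that $N \ge 2$ and $N^d \ge k/2$, both required properties hold. The main obstacle is purely bookkeeping: ensuring the leftover points (from $k$ not being a perfect power) form an asymptotically negligible mass so that the clean covariance computation for the exact grid survives the perturbation; the PSD mixture inequality $\Sigma_\mu \succeq \lambda \Sigma_P$ is exactly the tool that makes this robust and avoids any delicate estimate on where the leftover points sit.
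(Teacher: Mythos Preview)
Your grid construction and the mixture/PSD argument for the covariance are clean and correct, but there is a genuine gap in the separation claim. You set $N=\lfloor k^{1/d}\rfloor$, take the $N^d$ grid centers $z_1,\dots,z_{N^d}$, and then place the leftover $k-N^d$ points at $z_1$. Those leftover points coincide with each other and with $x_1=z_1$, so $\|x_i-x_j\|=0$ for those indices and the requirement $\|x_i-x_j\|\ge ck^{-1/d}$ for all $i\ne j$ fails. Your own summary says the ``main obstacle is purely bookkeeping'' for the covariance, but you have transferred the bookkeeping problem to the separation side and then ignored it.

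The fix is easy but does require flipping the construction: take $N=\lceil k^{1/d}\rceil$ so that $N^d\ge k$, and let $\{x_1,\dots,x_k\}$ be any $k$ of the $N^d$ grid centers. Separation is now automatic with constant $1/(2\sqrt d)$ since $N\le 2k^{1/d}$. The price is that the mixture inequality $\Sigma_\mu\succeq\lambda\Sigma_P$ no longer applies in the useful direction (now $\rho_N$ is the mixture, not $\mu$), so you must instead bound $\|\Sigma_\mu-\Sigma_{\rho_N}\|_{\op}$ directly. This is straightforward: all points lie in $B(0,1)$, you are deleting an $N^d-k=O(N^{d-1})=o(N^d)$ fraction of them, so both the second-moment matrix and the mean move by $o(1)$, and hence $\lambda_{\min}(\Sigma_\mu)\ge\sigma_N^2-o(1)\ge\tfrac{1}{24d}$ for large $k$.

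For comparison, the paper takes a different route: it starts from a maximal $\gamma_d k^{-1/d}$-separated set of exactly $k$ points in a smaller ball (choosing $\gamma_d$ small enough that such a set exists), centers it, and then lower-bounds $\lambda_{\min}(\Sigma_\mu)$ by a slab-counting argument---for any unit $v$, at most half the points can lie in the slab $\{|v^\top x|\le\delta\}$ by a volume comparison, forcing $k^{-1}\sum(v^\top x_i)^2\gtrsim\delta^2$. Your grid approach makes the covariance computation explicit and avoids the geometric slab estimate, at the cost of the leftover-point bookkeeping; the paper's packing approach gets exactly $k$ points for free but needs the slab argument for the eigenvalue.
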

\begin{proof}
    We start with verifying that one can choose points $\{x_1, \ldots, x_k\}$ such that $\mu$ has mean zero.
    For a given integer $k$, consider a maximal set of points $\{x_1', \ldots, x_k'\}$ inside $B_{\R^d}(0,1/3)$ such that $\| x_i' - x_j' \| > \gamma_d k^{-1/d}$ for all $i \ne j$, where $\gamma_d$ is a small positive constant that depends only on $d$. If we let $x_i := x_i' - \bar{x}'$ with $\bar{x}' = k^{-1}\sum_{i=1}^k x_i'$, then $\{x_1, \ldots, x_k\} \subset B_{\R^d}(0,2/3)$, $\norm{x_i - x_j} > \gamma_d k^{-1/d}$ for all $i \ne j$, and the distribution $\mu = k^{-1}\sum_{i=1}^k \delta_{x_i}$ has mean zero. 
 
 Observe that 
    \[
    \lambda_{\min}(\Sigma_{\mu}) = \min_{\|v\| = 1}v^\top \Sigma_{\mu} v = \min_{\|v\|=1} \frac{1}{k}\sum_{i=1}^k (v^\top x_i)^2.
    \]
    Let $v$ be an arbitrary unit vector in $\R^d$. Consider the set of disjoint balls of radius $r_k := \frac{\gamma_d}{2}k^{-1/d}$ centered at $x_i$. Recall that
    \[
    \mathrm{Vol}(B_{\R^d}(0,r_k)) = \alpha_dr_k^d = \alpha_d(\gamma_d/2)^dk^{-1},
    \]
    where $\alpha_d$ is the volume of the unit ball in $\R^d$. Let 
    \[
    S_\delta:= \big\{x \in \R^d : |v^\top x| \le \delta \big\}
    \]
    for some $\delta > 0$ to be chosen later.
    Let $N_\delta = | \{ x_1,\dots,x_k \} \cap S_\delta|$. The $N_\delta$ disjoint balls corresponding to these points are all contained in $S_{\delta + r_k}\cap B_{\R^d}(0,1)$ (for sufficiently large $k$). By comparing volumes, we have
    \[
    N_{\delta}\cdot \left(\alpha_d \left(\gamma_d \over 2\right)^d k^{-1} \right) \le 2(\delta + r_k)\alpha_{d-1},
    \]
    which implies
    \[
    N_\delta \le k\cdot \frac{2(\delta + r_k)\alpha_{d-1}}{\alpha_d(\gamma_d / 2)^d} \le k\cdot \frac{3\delta \alpha_{d-1}}{\alpha_d(\gamma_d / 2)^d}
    \]
    for $k$ large enough. Let $K_{d, \gamma_d}:= \frac{3 \alpha_{d-1}}{\alpha_d(\gamma_d / 2)^d}$, and choose $\delta = \frac{1}{2K_{d,\gamma_d}}$, which yields $N_\delta \le k / 2$. Since at most $k/2$ points are inside $S_\delta$, at least $k/3$ points must be outside $S_\delta$. For any point $x_i$ outside $S_\delta$, we have $(v^\top x_i)^2 > \delta^2$. Therefore,
    \[
    \frac{1}{k}\sum_{i=1}^k (v^\top x_i)^2 \ge \frac{1}{k}\sum_{x_i \notin S_\delta}(v^\top x_i)^2 > \frac{k/3}{k}\delta^2 = \frac{\delta^2}{3}.
    \]
    We conclude that
    \[
    \lambda_{\min}(\Sigma_{\mu}) \ge \frac{\delta^2}{3} = \frac{1}{3}\left(\frac{1}{2K_{d,\gamma_d}}\right)^2 = \frac{1}{12}\left(\frac{\alpha_d(\gamma_d/2)^d}{3\alpha_{d-1}}\right)^2.
    \]
    This completes the proof.
    
\end{proof}

\begin{lemma}
\label{twopoint}
    Suppose $\nu = \frac{1}{2}\delta_{-1} + \frac{1}{2}\delta_1$ and $\mu = (\frac{1}{2} + \epsilon)\delta_{-1} + (\frac{1}{2}-\epsilon)\delta_1$. Then for sufficiently small $\epsilon > 0$, we have $D(\mu, \nu) = 32\epsilon(1-\epsilon)$.
\end{lemma}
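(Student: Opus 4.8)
The plan is to exploit the discreteness of both supports. Since $\mu$ and $\nu$ both live on $\{-1,1\}$, the coupling set $\Pi(\mu,\nu)$ is a one-parameter family, and the double integral defining $D(\mu,\nu)=\mathsf{GW}_{2,2}^2(\mu,\nu)$ reduces to minimizing an explicit scalar quadratic.

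First I would parametrize the couplings. Any $\pi\in\Pi(\mu,\nu)$ is given by the four numbers $\pi_{ab}:=\pi(\{x=a\}\times\{y=b\})$, $a,b\in\{-1,1\}$; setting $t:=\pi_{-1,-1}$, the marginal constraints force $\pi_{-1,1}=\tfrac12+\epsilon-t$, $\pi_{1,-1}=\tfrac12-t$, $\pi_{1,1}=t-\epsilon$, and nonnegativity of all four entries is exactly $t\in[\epsilon,\tfrac12]$ (a nonempty interval since $\epsilon\le\tfrac12$).

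Next I would simplify the integrand. For $x,x'\in\{-1,1\}$ one has $\|x-x'\|^2=4\,\mathbbm{1}_{\{x\ne x'\}}$, and since the relevant indicators take only the values $0$ and $1$, $\big|\,\|x-x'\|^2-\|y-y'\|^2\,\big|^2=16\,\mathbbm{1}_{\{\mathbbm{1}_{\{x\ne x'\}}\ne\mathbbm{1}_{\{y\ne y'\}}\}}$. Thus, for $(X,Y),(X',Y')$ i.i.d.\ from $\pi$ and $Z:=\mathbbm{1}_{\{X\ne X'\}}$, $W:=\mathbbm{1}_{\{Y\ne Y'\}}$, the GW objective at $\pi$ equals $16\,\Prob(Z\ne W)=16\,\E[(Z-W)^2]=16\big(\E Z+\E W-2\,\E[ZW]\big)$, using $Z^2=Z$, $W^2=W$. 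Then I would evaluate each term: $\E Z=2\mu(\{-1\})\mu(\{1\})=\tfrac12-2\epsilon^2$, $\E W=2\nu(\{-1\})\nu(\{1\})=\tfrac12$, and—expanding the product and using $(X,Y)\perp(X',Y')$—$\E[ZW]=\Prob(X\ne X',\,Y\ne Y')=2\big(\pi_{-1,-1}\pi_{1,1}+\pi_{-1,1}\pi_{1,-1}\big)$. Substituting the parametrization and collecting terms yields $\Prob(Z\ne W)=2S(1-S)$ with $S:=2t-\epsilon$.

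Finally I would optimize: as $t$ ranges over $[\epsilon,\tfrac12]$, $S=2t-\epsilon$ ranges over $[\epsilon,1-\epsilon]$, and concavity of $S\mapsto S(1-S)$ on $[0,1]$ forces the minimum over this interval to be attained at the endpoints, with value $\epsilon(1-\epsilon)$. Hence $D(\mu,\nu)=16\cdot\min_{S\in[\epsilon,1-\epsilon]}2S(1-S)=32\,\epsilon(1-\epsilon)$. The computation is entirely elementary; the only steps that demand care are the bookkeeping in evaluating $\E[ZW]$ (expanding the double integral and pairing up the correct coupling entries) and confirming that the feasible range of $t$ is precisely $[\epsilon,\tfrac12]$. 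I do not anticipate any genuine obstacle, and in fact the smallness of $\epsilon$ plays no role—the identity holds for every $\epsilon\in(0,\tfrac12]$ for which $\mu$ is a probability measure.
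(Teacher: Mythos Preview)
Your proof is correct and follows essentially the same approach as the paper: parametrize the one-dimensional family of couplings by $t=\pi_{-1,-1}\in[\epsilon,\tfrac12]$, compute the GW objective explicitly, and optimize a scalar quadratic. The only cosmetic difference is that the paper uses the algebraic identity $|X-X'|^2=2-2XX'$ to reduce to $D=8-8\sup_\pi\E_\pi[XY]^2$ with $\E_\pi[XY]=4t-1-2\epsilon$, whereas you use the indicator representation to write the objective as $16\,\Prob(Z\ne W)=32S(1-S)$ with $S=2t-\epsilon$; since $\E_\pi[XY]=2S-1$ and $8-8(2S-1)^2=32S(1-S)$, the two computations are equivalent, and your observation that the smallness of $\epsilon$ is not needed is also correct.
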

\begin{proof}
    By definition,
    \[
        D(\mu, \nu)
        =\inf_{\pi \in \Pi(\mu, \nu)}\E_{(X,Y,X'Y') \sim \pi \otimes \pi}\left[|X-X'|^4 + |Y-Y'|^4 - 2|X-X'|^2|Y-Y'|^2\right].
    \]
    Pick any $\pi \in \Pi(\mu,\nu)$ and let $(X,Y) \sim \pi$ and $(X',Y') \sim \pi$ be independent. 
    Observe that
    \[
    \begin{split}
    \E\left[ |X-X'|^4\right]  &= 16\Prob(X \ne X') \\
    &= 16\left( 1 - \left(\left(\frac{1}{2}+ \epsilon\right)^2 + \left(\frac{1}{2}-\epsilon\right)^2\right) \right) \\
    &= 8 - 32\epsilon^2,  \\
    \E\left[ |Y-Y'|^4 \right] &= 16\Prob(Y \ne Y') = 8, \quad \text{and} \\
        \E\left[ |X - X'|^2|Y-Y'|^2 \right] &= \E\left[ (2-2XX')(2-2YY') \right] \\
        &= 4\E\left[ (1-XX')(1-YY') \right] \\
        &= 4\left( 1 - \E[X]^2 - \E[Y]^2 + \E[XY]^2 \right) \\
        &=4\left( 1 - 4\epsilon^2 + \E[XY]^2 \right)
    \end{split}
    \]
    so that
    \[
    \begin{split}
        D(\mu,\nu)&= \inf_{\pi\in\Pi(\mu,\nu)}\left\{ 16-32\epsilon^2 - 2\cdot4(1 - 4\epsilon^2+ \E_{\pi}[XY]^2)\right\} \\
        &= 8-8\sup_{\pi\in\Pi(\mu,\nu)}\E_\pi[XY]^2.
    \end{split}
    \]
    To compute the supremum, we note that any coupling $\pi$ between $\mu$
 and $\nu$ is determined uniquely by the single parameter $a:= \pi(\{-1,-1\})$, where the valid range for $a$ is $[\epsilon,1/2]$. Then, by direct computation,
 \[
 \begin{split}
     \E_\pi[XY] &= a + (-1)\left(\frac{1}{2}-a\right) + (-1)\left(\frac{1}{2}+\epsilon - a\right) + (a-\epsilon) \\
     &= 4a-1-2\epsilon.
 \end{split}
 \]
 We just need to minimize $(4a-1-2\epsilon)^2$ over $a \in [\epsilon,1/2]$. Clearly, the maximum occurs either at  $a = \epsilon$ or $a = 1/2$. At $a = \epsilon$, we have $(4a-1-2\epsilon)^2 = (2\epsilon-1)^2$. At $a = 1/2$, we also have $(4a-1-2\epsilon)^2 = (2\epsilon-1)^2$. So the maximum value is $(2\epsilon-1)^2 = 1 - 4\epsilon + 4\epsilon^2$. We conclude that
 \[
     D(\mu, \nu) = 8 - 8 + 32\epsilon - 32\epsilon^2 
     = 32\epsilon(1-\epsilon),
 \]
 completing the proof. 
 \end{proof}

We are now in position to prove Theorem \ref{thm: lower bound}.

\begin{proof}[Proof of Theorem \ref{thm: lower bound}]
Let $\calM_{n,m}$ denote the left-hand side on (\ref{eq: minimax}). In this proof, the notation $\lesssim$ means that an inequality holds up to a constant that depends only on $d_x$ and $d_y$.
By symmetry, we may assume without loss of generality that $d_x \le d_y$ and $n \le m$. We divide the proof into two steps.

\medskip

\textbf{Step 1}.
First, we shall establish that the parametric lower bound $n^{-1/2}$ always holds.  Consider first the $d_x = d_y=1$ case. Let $\mu_0 = \nu = \frac{1}{2}\delta_{-1} + \frac{1}{2}\delta_1$ and $\mu_1 = (\frac{1}{2} + \epsilon)\delta_{-1} + (\frac{1}{2}-\epsilon)\delta_1$. Then, by (\ref{eq: TVchi2}) and (\ref{eq: chi2}),
\[
d_{\mathrm{TV}}(\mu_0^n, \mu_1^n) \le \sqrt{\chi^2(\mu_0^n, \mu_1^n)} = \sqrt{(1+\chi^2(\mu_0,\mu_1))^n - 1}.
\]
By the definition of the $\chi^2$-divergence, 
\[
\begin{split}
    \chi^2(\mu_0, \mu_1) &= \sum_{x\in\{-1,1\}}\frac{(\mu_0(x) - \mu_1(x))^2}{\mu_0(x)} \\
    &= \frac{\epsilon^2}{1/2} +\frac{\epsilon^2}{1/2} = 4\epsilon^2. 
\end{split}
\]
We set $\epsilon = cn^{-1/2}$ for some small positive constant $c$, so that $\chi^2(\mu_0, \mu_1) = 4c^2/n$ and
\[
\begin{split}
    d_{\mathrm{TV}}(\mu_0^n,\mu_1^n) &\le \sqrt{\left( 1+\frac{4c^2}{n} \right)^n - 1} \\
    &\le \sqrt{e^{4c^2} - 1},
\end{split}
\]
where we used the inequality $(1 + t/n)^n \le e^t$ for $t > 0$. We conclude that $d_{\mathrm{TV}}(\mu_0^n, \mu_1^n) < 1$ if we choose $c$ to be sufficiently small.

Recalling Lemma \ref{twopoint}, set 
\[
\theta_0:= D(\mu_0, \nu) =0 \quad \text{and} \quad \theta_1 := D(\mu_1, \nu) = 32\epsilon(1-\epsilon).
\]
Observe that 
\[
d_{\mathrm{TV}}(\mu_0^n \otimes \nu^m, \mu_1^n \otimes \nu^m) = d_{\mathrm{TV}}(\mu_0^n, \mu_1^n)
\]
which is bounded away from $1$ as argued above. By Le Cam's two-point argument \cite[Theorem 9.4]{wu2020lecture},
\[
\begin{split}
    \calM_{n,m} \ge \inf_{\hat{\theta}} \max_{i \in \{0,1\}} \E_{(\mu_i,\nu)}\left [\big|\hat{\theta} - \theta_i\big|\right] &\ge \frac{\abs{\theta_1 - \theta_0}}{4}\left(1 - d_{\mathrm{TV}}(\mu_0^n, \mu_1^n) \right) \\
    &\gtrsim 32\epsilon(1 - \epsilon) \\
    &\gtrsim 32cn^{-1/2}(1 - cn^{-1/2}) \\
    &\gtrsim n^{-1/2} .
\end{split}
\]
For general $d_x$ and $d_y$, by considering $\mu$ and $\nu$ such that the last $d_x-1$ and $d_y-1$ coordinates of $X \sim \mu$ and $Y \sim \nu$, respectively, are degenerate to $0$, one can see that $\calM_{n,m} \gtrsim n^{-1/2}$.

\medskip

\textbf{Step 2}. Consider the $4 < d_x \le d_y$ case. By considering $\nu$ such that the last $d_y-d_x$ coordinates of $Y \sim \nu$ are degenerate to $0$, it suffices to consider the  $d_x = d_y = :d$ case. 

For a given integer $k$, let $\{x_1, \ldots, x_k\}$ be a set constructed in Lemma \ref{eigenval}. Let $F$ be a random function uniformly distributed over the collection of bijections from $[k]:=\{1,\dots,k\}$ onto $\{x_1, \ldots, x_k\}$. Let $\mathfrak{u}$ be the uniform distribution on $[k]$ and $\mathfrak{q}$ be any distribution on $[k]$. Since the support of $F_\#\mathfrak{u}$ (and $F_\#\mathfrak{q}$) is contained in the unit ball, Lemma \ref{lem: GW}~(i) combined with the first inequality in Lemma \ref{lem: GW}~(ii) yields
\[
\mathsf{GW}_{2,2}(F_\# \mathfrak{q} , F_\# \mathfrak{u}) \le 4 \mathsf{GW}_{2,1}(F_\#\mathfrak{q}, F_\#\mathfrak{u}) \lesssim W_{2}(F_\#\mathfrak{q}, F_\#\mathfrak{u}).
\]
Since $d > 4$, one can invoke Proposition 9 in \cite{niles2022estimation} to conclude that there exists a constant $C_2 > 0$ depending only on $d$ such that
\[
\mathsf{GW}_{2,2}(F_\# \mathfrak{q}, F_\#\mathfrak{u}) \le C_2 k^{-1/d}(\chi^2(\mathfrak{q},\mathfrak{u}))^{1/d}d_{\mathrm{TV}}(\mathfrak{q},\mathfrak{u})^{\frac{1}{2} - \frac{2}{d}}
\]
with probability at least $.9$.

On the other hand, the second inequality in Lemma \ref{lem: GW}~(ii) combined with Lemma \ref{eigenval} tells us that
\[
\begin{split}
\mathsf{GW}_{2,2}(F_\#\mathfrak{q}, F_\#\mathfrak{u} ) &\gtrsim \inf_{U\in E(d)} W_2(F_\#\mathfrak{q}, U_\# (F_\# \mathfrak{u})) \\
&= \inf_{U\in E(d)} W_2(F_\#\mathfrak{q}, (U \circ F)_\# \mathfrak{u}) \\
&\ge \inf_{U\in E(d)} W_1(F_\#\mathfrak{q}, (U \circ F)_\# \mathfrak{u}),
\end{split}
\]
where we note that $F_{\#}\mathfrak{u} = k^{-1}\sum_{i=1}^k\delta_{x_i}$.
Let $U$ be any element in $E(d)$ and $\pi$ be any coupling between $F_\#\mathfrak{q}$ and $(U \circ F)_\# \mathfrak{u}$. Write $y_i = U x_i$ for $i = 1, \ldots, k$. Since $U$ is an isometry, $\norm{y_i - y_j} \ge ck^{-1/d}$ for $i\ne j$. For each $x_i$, there is at most one $y_j$ that satisfies $\norm{x_i - y_j} \le \frac{1}{3}ck^{-1/d}$. Similarly, for each $y_i$, there is at most one $x_j$ that satisfies $\norm{x_j - y_i} \le \frac{1}{3}ck^{-1/d}$. Hence, there exists a bijection $f$ between $\{y_1, \ldots, y_k\}$ and $\{x_1, \ldots, x_k\}$ such that $f(y_i) = x_j$ whenever $\norm{y_i - x_j} \le \frac{1}{3}ck^{-1/d}$. This argument yields
\[
\|x-y\| \ge \frac{1}{3}ck^{-1/d}\mathbbm{1}_{\{x \ne f(y) \}}
\]
for every $x \in \{x_1, \ldots, x_k\}$ and $y \in \{y_1, \ldots, y_k\}$. Hence
\[
\begin{split}
    \int \|x-y\|\,d\pi(x,y) &\ge \frac{1}{3}ck^{-1/d}\Prob_\pi(X \ne f(Y)) \\
    &\ge \frac{1}{3}ck^{-1/d}d_{\mathrm{TV}}(F_\#\mathfrak{q}, (f\circ U\circ F)_\# \mathfrak{u}) \\
    &= \frac{1}{3}ck^{-1/d}d_{\mathrm{TV}}(F_\# \mathfrak{q}, F_\#\mathfrak{u}) \\
    &= \frac{1}{3}ck^{-1/d}d_{\mathrm{TV}}(\mathfrak{q},\mathfrak{u}),
\end{split}
\]
where the second inequality follows from (\ref{eq: dual TV}) and the penultimate step follows from the fact that $f\circ U$ is a bijection from $\{x_1, \ldots, x_k\}$ onto itself and that $F_\#\mathfrak{u}$ has the uniform distribution on $\{x_1, \ldots, x_k\}$. Thus we have proved 
\[
\mathsf{GW}_{2,2}(F_\#\mathfrak{q}, F_\#\mathfrak{u}) \gtrsim k^{-1/d}d_{\mathrm{TV}}(\mathfrak{q},\mathfrak{u})
\]
almost surely.

In summary, there exist constants $C_1, C_2 > 0$ depending only on $d$ such that
\begin{equation}
C_1 k^{-1/d}d_{\mathrm{TV}}(\mathfrak{q},\mathfrak{u}) \le D^{1/2}(F_\# \mathfrak{q}, F_\#\mathfrak{u}) \le C_2 k^{-1/d}(\chi^2(\mathfrak{q},\mathfrak{u}))^{1/d}d_{\mathrm{TV}}(\mathfrak{q},\mathfrak{u})^{\frac{1}{2} - \frac{2}{d}},
\label{eq: sandwich}
\end{equation}
where the lower bound holds almost surely and the upper bound holds
with probability at least $.9$.

Set $\Delta_d = \frac{1}{16}C_1 k^{-1/d}$.
Following \cite{niles2022estimation}, let $\calD_k$ be the subset of probability distributions $\mathfrak{q}$ on $[k]$ satisfying $\chi^2(\mathfrak{q},\mathfrak{u})\le 9$. Denote by $\calD_{k,\delta}^-$ the subset of $\calD_k$ satisfying $d_{\mathrm{TV}}(\mathfrak{q},\mathfrak{u}) \le \delta$ and by $\calD_k^+$ the subset of $\calD_k$ satisfying $d_{\mathrm{TV}}(\mathfrak{q},\mathfrak{u}) \ge 1/4$.
If $\delta\le \left(\frac{C_1}{144C_2}\right)^{\frac{1}{1/2 - 2/d}}$, then for any $\mathfrak{q} \in \calD_{k,\delta}^-$, it holds that $D^{1/2}(F_\#\mathfrak{q}, F_\#\mathfrak{u}) \le \Delta_d$ with probability at least $.9$. Also, for any $\mathfrak{q} \in \calD_{m}^+$, it holds that $D^{1/2}(F_\#\mathfrak{q}, F_\#\mathfrak{u}) \ge 3\Delta_d$ almost surely.

Let $\hat{D}_{n,m}$ be any estimator for $D(\cdot,\cdot)$.  Given $F$, generate
\[
X_1,\dots,X_n \sim F_{\#}\mathfrak{q} \quad \text{and} \quad Y_1,\dots,Y_m \sim F_{\#}\mathfrak{u}
\]
independently. Denote by $\Prob_\mathfrak{q}$ the unconditional law of $(X_1,\dots,X_n,Y_1,\dots,Y_m)$ and by $\Prob_{F_\#\mathfrak{q}, F_\#\mathfrak{u}}$ the conditional law given $F$.
Consider the test
\[
\psi := \mathbbm{1}\{ \hat{D}^{1/2}_{n,m} \le 2\Delta_d \}.
\]
Define the event $A= \{|\hat{D}_{n,m}^{1/2} - D^{1/2}(F_\#\mathfrak{q}, F_\#\mathfrak{u})| \ge \Delta_d\}$. We obtain, for any $\mathfrak{q} \in \calD_{k,\delta}^-$,
\[
\begin{split}
\E\big [ \Prob_{F_\#\mathfrak{q}, F_\#\mathfrak{u}}(A) \big]&\ge \E\big[\Prob_{F_\#\mathfrak{q}, F_\#\mathfrak{u}}\big(\{\hat{D}_{n,m}^{1/2} > 2\Delta_d\} \cap \{ D^{1/2}(F_\#\mathfrak{q}, F_\#\mathfrak{u}) \le \Delta_d\} \big)\big] \\
&\ge \E\big[\Prob_{F_\#\mathfrak{q}, F_\#\mathfrak{u}}\big(\hat{D}_{n,m}^{1/2} > 2\Delta_d  \big)\big] - \Prob\big(D^{1/2}(F_\#\mathfrak{q}, F_\#\mathfrak{u}) > \Delta_d\big) \\
&\ge \Prob_\mathfrak{q} (\psi = 0) - 0.1,
\end{split}
\]
and for $\mathfrak{q} \in \calD_k^+$, we have
\[
\begin{split}
    \E\big[\Prob_{F_\#\mathfrak{q}, F_\#\mathfrak{u}}\big( A ) \big] &\ge \E\big[\Prob_{F_\#\mathfrak{q}, F_\#\mathfrak{u}}\big(\{\hat{D}_{n,m}^{1/2} \le 2\Delta_d \}\cap \{D^{1/2}(F_\#\mathfrak{q}, F_\#\mathfrak{u}) \ge 3 \Delta_d\} \big)\big ] \\
    &=\E\big[\Prob_{F_\#\mathfrak{q}, F_\#\mathfrak{u}}\big(\hat{D}_{n,m}^{1/2} \le 2\Delta_d \big) \big]\\
    &= \Prob_\mathfrak{q}(\psi = 1).
\end{split}
\]
Conclude that
\[
\begin{split}
&\sup_{(\mu,\nu) \in \calP(B_\calX(0,1)) \times \calP(B_\calY(0,1))}      \Prob\big(|\hat{D}_{n,m}^{1/2} - D^{1/2}(\mu, \nu)| \ge \Delta_d\big) \\
    &\quad \ge \frac{1}{2}\left( \sup_{\mathfrak{q} \in \calD_k^+}\E\big[\Prob_{F_\#\mathfrak{q}, F_\#\mathfrak{u}}\big( A ) \big] + \sup_{\mathfrak{q} \in \calD_{k,\delta}^-}  \E\big[\Prob_{F_\#\mathfrak{q}, F_\#\mathfrak{u}}\big( A ) \big]\right) \\
    &\quad \ge \frac{1}{2}\left( \sup_{{\mathfrak{q} \in \calD_{k}^+}}\Prob_\mathfrak{q}(\psi=1) + \sup_{\mathfrak{q} \in \calD_{k,\delta}^-} \Prob_\mathfrak{q}(\psi = 0)\right) - 0.1.
\end{split}
\]

Now, choosing $k = \lceil C\delta^{-1}n\log n\rceil$ for a sufficiently large constant $C$ and applying Proposition 10 of \cite{niles2022estimation} yields
\[
\sup_{(\mu,\nu) \in \calP(B_\calX(0,1)) \times \calP(B_\calY(0,1))} \E\left[\big|\hat{D}_{n,m}^{1/2} - D^{1/2}(\mu, \nu)\big|\right] \ge 0.8 \Delta_d \gtrsim(n\log n)^{-1/d},
\]
which, by Jensen's inequality, implies
\[
\sup_{(\mu,\nu) \in \calP(B_\calX(0,1)) \times \calP(B_\calY(0,1))}  \E\left [ \left(\hat{D}_{n,m}^{1/2} - D^{1/2}(\mu, \nu)\right)^2 \right ]\gtrsim (n\log n)^{-2/d}.
\]
Finally, by the elementary inequality $|a^2-b^2| \ge (|a|-|b|)^2$, we conclude that
\[
\sup_{(\mu,\nu) \in \calP(B_\calX(0,1)) \times \calP(B_\calY(0,1))} \E\left [\big|\hat{D}_{n,m} - D(\mu, \nu)\big|\right] \gtrsim (n\log n)^{-2/d}.
\]
This completes the proof.
\end{proof}

\subsection{Proofs for Section \ref{sec: deviation}}

\begin{proof}[Proof of Lemma \ref{lem: mcdiarmid}]
Set $a = \sum_{i=1}^n\mathsf{c}_i^2$ and $b = \sum_{i=1}^N\mathsf{c}_i$ for the notational convenience. Proposition 2 in \cite{combes2024extension} shows that
\[
\Prob \Big ( \mathfrak{f}(Z) \ge \E[\mathfrak{f}(Z) \mid Z \in \mathcal{W}] +t \sqrt{a/2} + \mathfrak{p}b \Big) \le \mathfrak{p} + e^{-t^2}.
\]
Observe that 
\[
\E[\mathfrak{f}(Z) \mid Z \in \mathcal{W}] = \frac{1}{1-\mathfrak{p}} \E\big[\mathfrak{f}(Z)\mathbbm{1}_{\{Z \in \mathcal{W}\}}\big]\\
\le \frac{1}{1-\mathfrak{p}} \E[\mathfrak{f}(Z)] \\
\le 2 \E[\mathfrak{f}(Z)],
\]
where the last inequality used the assumption that $\mathfrak{p} \le \frac{1}{2}$. 
\end{proof}
\begin{proof}[Proof of Theorem \ref{thm: deviation}]
As before, we assume without loss of generality that $\mu$ and $\nu$ have mean zero. 
Recall that $\tilde{\mu}_n$ and $\tilde{\nu}_n$ are the centered versions of $\hat{\mu}_n$ and $\hat{\nu}_n$, respectively. Observe that
\[
\begin{split}
\Delta_n &\le |S_1(\tilde{\mu}_n,\tilde{\nu}_n)-S_1(\mu,\nu)| + |S_2(\tilde{\mu}_n,\tilde{\nu}_n) - S_2(\mu,\nu)| \\
&=:\mathfrak{f}(Z) +\mathfrak{g}(Z), 
\end{split}
\]
where $Z = (Z_1,\dots,Z_N) := (X_1,\dots,X_n,Y_1,\dots,Y_n) \in \calX^n \times \calY^n =: \mathcal{Z}$ with $N:=2n$.
For $r \ge 1$, set
\[
\mathcal{W}_r := \big\{ z = (z_1,\dots,z_N) \in \mathcal{Z} : \max_{1 \le i \le N}\|z_i\| \le r \big \}
\]
and $\mathfrak{p}_r := \Prob (Z \notin \mathcal{W}_r)$.
For any $Z,Z' \in \mathcal{W}_r$ that differ only at the $i$-th coordinate for some $i \in \{ 1,\dots,N \}$, we shall show that 
\begin{equation*}
|\mathfrak{f}(Z) - \mathfrak{f}(Z')| \vee |\mathfrak{g}(Z) - \mathfrak{g}(Z')| \lesssim \frac{r^4}{n},
\end{equation*}
where the inequality holds up to a constant that depends only on $d_x$ and $d_y$. 
Let $\tilde{\mu}_n'$ and $\tilde{\nu}_n'$ be the centered empirical distributions corresponding to $Z'$. Observe that
\[
\begin{split}
&|\mathfrak{f}(Z) - \mathfrak{f}(Z')| \le |S_1(\tilde{\mu}_n,\tilde{\nu}_n)-S_1(\tilde{\mu}_n',\tilde{\nu}_n')| =: \bar{\mathfrak{f}}(Z,Z') \quad \text{and} \\
&|\mathfrak{g}(Z) - \mathfrak{g}(Z')| \le |S_2(\tilde{\mu}_n,\tilde{\nu}_n)-S_2(\tilde{\mu}_n',\tilde{\nu}_n')| =: \bar{\mathfrak{g}}(Z,Z'),
\end{split}
\]
and we will verify that 
\[
\bar{\mathfrak{f}}(Z,Z') \vee \bar{\mathfrak{g}}(Z,Z') \lesssim \frac{r^4}{n}
\]
up to a constant that depends only on $d_x$ and $d_y$.
Since $\bar{\mathfrak{f}}(Z,Z') = r^4\bar{\mathfrak{f}}(Z/r,Z'/r)$ and $\bar{\mathfrak{g}}(Z,Z') = r^4\bar{\mathfrak{g}}(Z/r,Z'/r)$, it suffices to verify the claim with $r=1$. For $\bar{\mathfrak{f}}(Z,Z')$, the desired inequality follows from the decomposition (\ref{eq: decomp S1}) and straightforward calculations. 
For $\bar{\mathfrak{g}}(Z,Z')$,  since $(\tilde{\mu}_n,\tilde{\mu}_n')$ and $(\tilde{\nu}_n,\tilde{\nu}_n')$ are supported in $B_{\calX}(0,2)$ and $B_\calY(0,2)$, respectively, Lemma \ref{lem: variational} yields
\[
\bar{\mathfrak{g}}(Z,Z') \le \sup_{\|A\|_{\op} \le 1}\big|T_{c_A}(\tilde{\mu}_n,\tilde{\nu}_n) - T_{c_A}(\tilde{\mu}_n',\tilde{\nu}_n')\big|.
\]
By duality and Lemma 5.4 in \cite{zhang2024gromov}, the right-hand side is $\lesssim n^{-1}$ up to a constant that depends only on $d_x$ and $d_y$ (see the proof of Proposition 20 in \cite{weed2019sharp} for a similar argument). 

Now, applying Lemma \ref{lem: mcdiarmid}, we have 
\[
\Prob \left ( \Delta_n \ge 2 \big( \E[\mathfrak{f}(Z)] + \E[\mathfrak{g}(Z)]\big) + Kr^4 (tn^{-1/2} + \mathfrak{p}_r) \right ) \le 2\mathfrak{p}_r + 2e^{-t^2}, \quad t > 0,
\]
where $K$ is a constant that depends only on $d_x$ and $d_y$. If $\mu$ and $\nu$ are supported in $B_\calX(0,r)$ and $B_\calY(0,r)$, respectively, then $\mathfrak{p}_r = 0$ and $\E[\mathfrak{f}(Z)] + \E[\mathfrak{g}(Z)] \lesssim r^4\varphi_n$ up to a constant that depends only on $d_x$ and $d_y$ (this follows from Theorem 4.2 in \cite{zhang2024gromov} or the proof of Theorem \ref{thm: upper bounds}). This establishes Case (i). The rest of the proof is devoted to establishing Cases (ii) and (iii). 
In what follows, the notation $\lesssim$ means that an inequality holds up to a constant that depends only on $d_x,d_y,\kappa, \beta$ and $M$ in Case (ii), and on $d_x,d_y,q$ and $M$ in Case (iii). In addition, $K$ denotes a generic constant that depends only on $d_x,d_y,\kappa,\beta$ and $M$ in Case (ii), and on $d_x,d_y,q$ and $M$ in Case (iii).

\underline{Case (ii)}. In this case, $\E[\mathfrak{f}(Z)] + \E[\mathfrak{g}(Z)] \lesssim \varphi_n + n^{-1/2}\sqrt{\log n}$ from the proof of Theorem \ref{thm: upper bounds} and by taking $q$ large enough and noting that $\mathfrak{m}_{4q}(\mu) \vee \mathfrak{m}_{4q}(\nu) \lesssim 1$. 
The probability $\mathfrak{p}_r$ can be bounded as 
\[
\mathfrak{p}_r \lesssim ne^{-(r/M)^\beta}.
\]
Choosing $r = K((\log n)^{1/\beta} +s^{1/4})$ for a large enough $K$ ensures $\mathfrak{p}_r \le (\frac{1}{2}) \wedge e^{-s^{\beta/4}} n^{-\kappa}$ and 
$r^4 \mathfrak{p}_r \lesssim ((\log n)^{4/\beta} + s)e^{-s^{\beta/4}} n^{-\kappa} \lesssim n^{-1/2}$ (recall that $s, \kappa \ge 1$).

\underline{Case (iii)}. In this case, $\E[\mathfrak{f}(Z)] + \E[\mathfrak{g}(Z)] \lesssim \bar{\varphi}_{n,q}$ from the proof of Theorem \ref{thm: upper bounds}. The probability $\mathfrak{p}_r$ can be bounded as 
\[
\mathfrak{p}_r \lesssim nr^{-4q}.
\]
Choosing $r = K s^{1/4}n^{1/(4q)}$ for a large enough $K$ ensures $\mathfrak{p}_r \le (\frac{1}{2}) \wedge s^{-q}$. This gives the result, finishing the proof.  
\end{proof}
\appendix

\section{Technical tools}

\subsection{A version of Theorem 1.1 in \cite{staudt2025convergence}}

The following is a version of Theorem 1.1 in \cite{staudt2025convergence} that can accommodate extra logarithmic factors at the expense of less tight moment conditions. The theorem below holds for general Polish spaces $\calX$ and $\calY$.

\begin{theorem}
\label{thm: simplified}
Let $c: \calX \times \calY \to \R_{+}$ be a nonnegative lower semicontinuous function such that there exist nonnegative measurable functions $c_\calX: \calX \to \R_{+}$ and $c_\calY: \calY \to \R_{+}$ with $c \le c_\calX \oplus c_\calY$. Set $\calB_{\calX}(r) = c_{\calX}^{-1}([0,r])$ and $\calB_{\calY}(r) = c_{\calY}^{-1}([0,r])$ for $r > 0$. Suppose that there exist constants $\kappa > 0, \alpha \in (0,1/2]$ and $\delta \ge  0$ such that
\begin{equation}
\sup_{(\mu,\nu) \in \calP(\calB_\calX(r)) \times \calP(\calB_\calY(r))}\E\left [ \big| T_c (\hat{\mu}_n,\hat{\nu}_m) - T_c(\mu,\nu) \big|\right] \\
\le \kappa r(n \wedge m)^{-\alpha} \big( \log_{+}(n \wedge m)\big)^{\delta}
\label{eq: BC}
\end{equation}
for all $r \ge 1$ and $n,m \in \N$, where $\log_{+}(k)= \log(1+k)$. For given $\epsilon>0$ and $M \ge 1$, we set
\[
\mathcal{Q}_{\epsilon,M} = \Big\{ (\mu,\nu) \in \calP(\calX) \times \calP(\calY): \| c_{\calX} \|_{L^{2+\epsilon}(\mu)}^{2+\epsilon} \vee \| c_{\calY} \|_{L^{2+\epsilon}(\nu)}^{2+\epsilon} \le M \Big \}.
\]
Then,
\[
\sup_{(\mu,\nu) \in \mathcal{Q}_{\epsilon,M}} \E\left [ \big | T_c(\hat{\mu}_n,\hat{\nu}_m) - T_c(\mu,\nu) \big|\right ] \lesssim (n \wedge m)^{-\alpha} \big( \log_{+}(n \wedge m)\big)^{\delta}.
\]
The hidden constant depends only on $\kappa, \alpha ,\delta, \epsilon$ and $M$.
\end{theorem}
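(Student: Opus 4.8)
The plan is to reduce to the compact-support estimate (\ref{eq: BC}) by a dyadic truncation of both spaces, arranged so that the tail contributions do not spoil the rate. Fix $(\mu,\nu)\in\mathcal{Q}_{\epsilon,M}$ and set $N=n\wedge m$. Since $T_c(\mu,\nu)\le\|c_{\calX}\|_{L^1(\mu)}+\|c_{\calY}\|_{L^1(\nu)}\le 2M^{1/(2+\epsilon)}$ by Jensen's inequality, and the same bound holds in expectation for $T_c(\hat\mu_n,\hat\nu_m)$, the quantity to be bounded is at most a constant depending only on $M$; hence it suffices to establish the estimate for $N$ above a fixed threshold.

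First I would set up the dyadic decomposition. Put $A_0=\calB_{\calX}(1)$, $A_j=\calB_{\calX}(2^j)\setminus\calB_{\calX}(2^{j-1})$ for $j\ge1$, and define $B_k$ on $\calY$ analogously; write $a_j=\mu(A_j)$, $b_k=\nu(B_k)$, $\hat a_j=\hat\mu_n(A_j)$, $\hat b_k=\hat\nu_m(B_k)$, and let $\alpha_j,\beta_k,\hat\alpha_j,\hat\beta_k$ be the corresponding normalized restrictions. Markov's inequality applied to $c_{\calX}^{2+\epsilon}$ gives $a_j\le M\,2^{-(j-1)(2+\epsilon)}$, whence the three series $\sum_j a_j 2^j$, $\sum_j a_j^{1-\alpha}2^j$ and $\sum_j\sqrt{a_j}\,2^j$ all converge, with sums bounded in terms of $M,\epsilon$ only (for the last two one uses $\epsilon>0$ together with $\alpha\le1/2$); the analogous facts hold for $(b_k)$. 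These convergence statements are where the extra integrability beyond second moments is used.

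The key step is a scale-localized comparison: by joint convexity of $T_c$ combined with a regluing argument — decompose an optimal plan according to which annuli the two coordinates lie in, discard the pieces whose marginals are mismatched with the target, and rebuild them at a transport cost dominated by the annulus radii — one obtains
\begin{equation*}
\big|T_c(\hat\mu_n,\hat\nu_m)-T_c(\mu,\nu)\big|\ \lesssim\ \sum_{j,k\ge0}(a_j\wedge\hat a_j)(b_k\wedge\hat b_k)\,\big|T_c(\hat\alpha_j,\hat\beta_k)-T_c(\alpha_j,\beta_k)\big|\ +\ \sum_{j\ge0}2^{\,j}\,|\hat a_j-a_j|\ +\ \sum_{k\ge0}2^{\,k}\,|\hat b_k-b_k|,
\end{equation*}
where a term in the double sum is omitted when the corresponding shell is empty. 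Conditionally on the counts $n\hat a_j,m\hat b_k$, the measure $\hat\alpha_j$ (resp.\ $\hat\beta_k$) is the empirical distribution of that many i.i.d.\ draws from $\alpha_j$ (resp.\ $\beta_k$), and $\alpha_j,\beta_k$ are supported in balls of radius $2^{j\vee k}$ in the respective spaces; so (\ref{eq: BC}) bounds the conditional expectation of the scale-$(j,k)$ error by $\kappa\,2^{j\vee k}(n\hat a_j\wedge m\hat b_k)^{-\alpha}(\log_{+}N)^{\delta}$. Using $(n\hat a_j\wedge m\hat b_k)^{-\alpha}\le(n\hat a_j)^{-\alpha}+(m\hat b_k)^{-\alpha}$, taking expectations, and applying Jensen's inequality ($\E[(n\hat a_j)^{1-\alpha}]\le(na_j)^{1-\alpha}$), the double sum is at most a constant multiple of $\kappa\,(\log_{+}N)^{\delta}(n^{-\alpha}+m^{-\alpha})\sum_{j,k}(a_j^{1-\alpha}b_k+a_jb_k^{1-\alpha})2^{j\vee k}$, which by the convergence facts above and $n^{-\alpha}\vee m^{-\alpha}\le N^{-\alpha}$ is $\lesssim\kappa\,(\log_{+}N)^{\delta}N^{-\alpha}$ (hidden constant depending on $M,\epsilon,\alpha$). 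The two remainder sums have expectation $\lesssim n^{-1/2}\sum_j\sqrt{a_j}\,2^j+m^{-1/2}\sum_k\sqrt{b_k}\,2^k\lesssim N^{-1/2}\le N^{-\alpha}$, since $\E|\hat a_j-a_j|\le\sqrt{a_j/n}$ and $\alpha\le1/2$. Combining these estimates and keeping track of the dependence on $\kappa,\alpha,\delta,\epsilon,M$ yields the claim.

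The main obstacle is the regluing/convexity inequality displayed above: $T_c$ does not decompose additively over a partition of the spaces, so localizing the error to scales requires an explicit coupling construction that splits an optimal plan by annulus membership, removes the mass misallocated relative to the target marginals, and reassembles it while keeping the incurred transport cost matched against the rapidly decaying annulus masses $a_j,b_k$. Making the exponents line up — so that precisely the gap between $L^{2}$ and $L^{2+\epsilon}$, together with $\alpha\le1/2$, is what the argument consumes — is the part that must be carried out with care.
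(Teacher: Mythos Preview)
The paper's own proof is a one-line specialization of Theorem~1.1 in \cite{staudt2025convergence} (take $s=2$, giving $\beta=\gamma=\tfrac12\ge\alpha$), so there is nothing self-contained to compare against; you are proposing a direct reconstruction of that external argument. The architecture --- dyadic shells in $c_\calX,c_\calY$, apply (\ref{eq: BC}) shell by shell, absorb tails using the $(2+\epsilon)$-moment --- is the right one, and everything downstream of your displayed inequality (summability of $\sum_j a_j^{1-\alpha}2^j$ from $\alpha\le\tfrac12$ and $\epsilon>0$; the Jensen step $\E[\hat a_j^{1-\alpha}]\le a_j^{1-\alpha}$; the $\sqrt{a_j/n}$ mass-fluctuation bound) is correct.

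The gap sits exactly where you locate it, but it is a genuine missing idea rather than a matter of care with constants: the two-index localization as written does not follow from joint convexity plus the regluing you describe. Convexity gives only $T_c(\mu,\nu)\le\sum_{j,k}a_jb_kT_c(\alpha_j,\beta_k)$, and the defect is $O(1)$ in general --- an optimal plan for $(\mu,\nu)$ typically concentrates on near-diagonal blocks, while the product weights $a_jb_k$ do not --- so subtracting a block-localized upper bound for $T_c(\hat\mu_n,\hat\nu_m)$ from $T_c(\mu,\nu)$ leaves an $O(1)$ residual, not the shell-by-shell difference you wrote. The alternative route --- disintegrate an optimal $\pi^\star$ over the blocks $A_j\times B_k$ and rebuild --- yields block marginals $\alpha_j^{(k)},\beta_k^{(j)}$ that depend on $\pi^\star$ rather than on $\mu,\nu$ alone; since $\hat\alpha_j$ is the empirical measure of $\alpha_j$, not of $\alpha_j^{(k)}$, hypothesis (\ref{eq: BC}) no longer controls $T_c(\hat\alpha_j,\hat\beta_k)-T_c(\alpha_j^{(k)},\beta_k^{(j)})$. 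A working version must avoid localizing both marginals simultaneously through independent product weights; one route is to treat the two signs of the difference separately, use duality for one direction, and let the shell structure enter via moment bounds on the dual potentials (cf.\ Lemma~\ref{lem: moment}) rather than via a product decomposition of the plan.
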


\begin{proof}
We follow the notation used in the proof of Theorem 1.1 in \cite{staudt2025convergence}. In our case, we may take $s=2$, which yields $\beta = \gamma = 1/2$. As such, $\alpha \le \beta$, so the conclusion follows from Comment 1 after the proof of Theorem 1.1 in \cite{staudt2025convergence}. The dependence of the constant on the parameters is deduced from the proof. 
\end{proof}

\subsection{Maximal inequality}

The following is taken from Lemma 8 in \cite{chernozhukov2015comparison}:
\begin{lemma}
\label{lem: maximal inequality}
Let $X_1,\dots,X_n$ be independent random vectors in $\R^k$ with finite second moments. Set $M = \max_{1 \le i \le n}\max_{1 \le j \le k} |X_{ij}|$ and $\sigma^2 = \max_{1 \le j \le k} \sum_{i=1}^n\E[X_{ij}^2]$. Then,
\[
\E\left [ \max_{1 \le j \le k} \left | \sum_{i=1}^n (X_{ij} - \E[X_{ij}]) \right |\right ] \lesssim \sigma \sqrt{\log_{+}(k)} + \sqrt{\E[M^2]}\log_+(k)
\]
up to a universal constant, where $\log_+(k) = \log (1+k)$. 
\end{lemma}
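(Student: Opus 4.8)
The final statement is the maximal inequality of \cite{chernozhukov2015comparison} (their Lemma 8), and I would reproduce the standard self-contained argument. The plan has three ingredients: Rademacher symmetrization, a coordinatewise conditional sub-Gaussian bound, and a self-bounding (quadratic) estimate for the ``random variance'' $\E[\max_j \sum_i X_{ij}^2]$. Throughout we may assume $\E[M^2] < \infty$, as otherwise the bound is trivial. Let $\varepsilon_1,\dots,\varepsilon_n$ be i.i.d.\ Rademacher variables independent of $X_1,\dots,X_n$. The symmetrization inequality (cf.\ Lemma 2.3.6 in \cite{van1996weak}), applied to the finite family $\{\pm e_j : 1 \le j \le k\}$, gives
\[
\E\Big[\max_{1 \le j \le k}\Big|\sum_{i=1}^n (X_{ij} - \E[X_{ij}])\Big|\Big] \le 2\,\E\Big[\max_{1 \le j \le k}\Big|\sum_{i=1}^n \varepsilon_i X_{ij}\Big|\Big].
\]

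Next I would condition on $X=(X_{ij})$. For each fixed $j$, $\sum_i \varepsilon_i X_{ij}$ is a sum of independent mean-zero terms each bounded in absolute value by $|X_{ij}|$, hence conditionally sub-Gaussian with variance proxy $\sum_i X_{ij}^2$ by Hoeffding's lemma. The elementary maximal inequality for a finite collection of sub-Gaussian variables then yields $\E_\varepsilon[\max_j |\sum_i \varepsilon_i X_{ij}|] \le \sqrt{2\log(2k)}\,(\max_j \sum_i X_{ij}^2)^{1/2}$, and taking expectations and using Jensen's inequality,
\[
\E\Big[\max_{j}\Big|\sum_i \varepsilon_i X_{ij}\Big|\Big] \le \sqrt{2\log(2k)}\;R, \qquad R := \Big(\E\big[\max_{j}{\textstyle\sum_i} X_{ij}^2\big]\Big)^{1/2}.
\]

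The final step controls $R$. Since $\sum_i X_{ij}^2 \le |\sum_i (X_{ij}^2 - \E[X_{ij}^2])| + \sigma^2$, a second symmetrization (now of the independent variables $X_{ij}^2$) gives $R^2 \le \sigma^2 + 2\,\E[\max_j |\sum_i \varepsilon_i X_{ij}^2|]$. Conditionally on $X$, $\sum_i \varepsilon_i X_{ij}^2$ is sub-Gaussian with variance proxy $\sum_i X_{ij}^4 \le M^2 \sum_i X_{ij}^2 \le M^2 \max_j \sum_i X_{ij}^2$, using $X_{ij}^2 \le M^2$ pointwise; so the same maximal inequality together with the Cauchy--Schwarz inequality gives $\E[\max_j |\sum_i \varepsilon_i X_{ij}^2|] \le \sqrt{2\log(2k)}\,\sqrt{\E[M^2]}\,R$. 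Hence $R^2 \le \sigma^2 + 2\sqrt{2\log(2k)}\,\sqrt{\E[M^2]}\,R$; solving this quadratic inequality in $R$ yields $R \le \sigma + 2\sqrt{2\log(2k)}\,\sqrt{\E[M^2]}$. Substituting back into the previous display and then into the symmetrization bound, and noting $\log(2k) \asymp \log_+(k)$ for $k \ge 1$, produces exactly $\E[\max_j |\sum_i (X_{ij}-\E X_{ij})|] \lesssim \sigma\sqrt{\log_+(k)} + \sqrt{\E[M^2]}\,\log_+(k)$.

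The only step requiring care is the self-bounding estimate for $R$: the quantity $R$ reappears on the right-hand side after the second symmetrization, and one must verify that bounding the conditional variance proxy via $X_{ij}^4 \le M^2 X_{ij}^2$ introduces precisely one additional factor $\sqrt{\log_+(k)}$, so that solving the resulting quadratic returns the advertised split of the two logarithmic powers between the $\sigma$ term and the $\sqrt{\E[M^2]}$ term. Everything else is routine bookkeeping with symmetrization and Hoeffding-type sub-Gaussian bounds.
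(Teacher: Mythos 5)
Your argument is correct. Note that the paper itself gives no proof of this statement: it is quoted verbatim from Lemma 8 of \cite{chernozhukov2015comparison}, so there is nothing in the paper to compare against. Your reconstruction is in fact the standard proof of that cited lemma -- symmetrization, the conditional Hoeffding/sub-Gaussian maximal bound over the $2k$ signed coordinates, a second symmetrization for the squares together with $X_{ij}^4 \le M^2 X_{ij}^2$ and Cauchy--Schwarz, and the resolution of the resulting quadratic inequality in $R$ (which is finite a priori since $R^2 \le k\sigma^2$ under the second-moment assumption). All steps check out, and the bookkeeping $\log(2k) \lesssim \log_+(k)$ correctly distributes one factor of $\sqrt{\log_+(k)}$ to the $\sigma$ term and two to the $\sqrt{\E[M^2]}$ term.
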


\bibliographystyle{alpha}
\bibliography{reference}

\newcommand{\etalchar}[1]{$^{#1}$}
\begin{thebibliography}{MBNWW24}

\bibitem[AGS08]{ambrosio2008gradient}
L.~Ambrosio, N.~Gigli, and G.~Savar{\'e}.
\newblock {\em Gradient Flows: in Metric Spaces and in the Space of Probability Measures}.
\newblock Springer Science \& Business Media, 2008.

\bibitem[AKT84]{ajtai1984optimal}
M.~Ajtai, J.~Koml{\'o}s, and G.~Tusn{\'a}dy.
\newblock On optimal matchings.
\newblock {\em Combinatorica}, 4(4):259--264, 1984.

\bibitem[AMJ18]{alvarez2018gromov}
D.~Alvarez-Melis and T.~Jaakkola.
\newblock Gromov-{W}asserstein alignment of word embedding spaces.
\newblock In {\em Proceedings of the 2018 Conference on Empirical Methods in Natural Language Processing}, pages 1881--1890, 2018.

\bibitem[AST19]{ambrosio2019pde}
L.~Ambrosio, F.~Stra, and D.~Trevisan.
\newblock A {PDE} approach to a 2-dimensional matching problem.
\newblock {\em Probability Theory and Related Fields}, 173(1):433--477, 2019.

\bibitem[BB13]{barthe2013combinatorial}
F.~Barthe and C.~Bordenave.
\newblock Combinatorial optimization over two random point sets.
\newblock In {\em S{\'e}minaire de Probabilit{\'e}s XLV}, pages 483--535. Springer, 2013.

\bibitem[BdMM02]{de2002almost}
J.~H. Boutet~de Monvel and O.~C. Martin.
\newblock Almost sure convergence of the minimum bipartite matching functional in euclidean space.
\newblock {\em Combinatorica}, 22(4):523--530, 2002.

\bibitem[BLG14]{boissard2014mean}
E.~Boissard and T.~Le~Gouic.
\newblock On the mean speed of convergence of empirical and occupation measures in {W}asserstein distance.
\newblock {\em Annales de l’Institut Henri Poincar{\'e} Probabilit{\'e}s et Statistiques}, 50(2):539--563, 2014.

\bibitem[Bre92]{breiman1992probability}
L.~Breiman.
\newblock {\em Probability}.
\newblock SIAM, 1992.

\bibitem[CCK15]{chernozhukov2015comparison}
V.~Chernozhukov, D.~Chetverikov, and K.~Kato.
\newblock Comparison and anti-concentration bounds for maxima of gaussian random vectors.
\newblock {\em Probability Theory and Related Fields}, 162(1):47--70, 2015.

\bibitem[CF21]{colombo2021bounds}
M.~Colombo and M.~Fathi.
\newblock Bounds on optimal transport maps onto log-concave measures.
\newblock {\em Journal of Differential Equations}, 271:1007--1022, 2021.

\bibitem[CNWR24]{chewi2024statistical}
S.~Chewi, J.~Niles-Weed, and P.~Rigollet.
\newblock Statistical optimal transport.
\newblock {\em arXiv preprint arXiv:2407.18163}, 2024.

\bibitem[Com24]{combes2024extension}
R.~Combes.
\newblock An extension of {M}cdiarmid's inequality.
\newblock In {\em 2024 IEEE International Symposium on Information Theory (ISIT)}, pages 79--84. IEEE, 2024.

\bibitem[CRL{\etalchar{+}}20]{chizat2020faster}
L.~Chizat, P.~Roussillon, F.~L{\'e}ger, F.-X. Vialard, and G.~Peyr{\'e}.
\newblock Faster {W}asserstein distance estimation with the {S}inkhorn divergence.
\newblock {\em Advances in Neural Information Processing Systems}, 34, 2020.

\bibitem[dBGM99]{del1999central}
E.~del Barrio, E.~Gin{\'e}, and C.~Matr{\'a}n.
\newblock Central limit theorems for the {W}asserstein distance between the empirical and the true distributions.
\newblock {\em The Annals of Probability}, 27(2):1009--1071, 1999.

\bibitem[DSS13]{dereich2013}
S.~Dereich, M.~Scheutzow, and R.~Schottstedt.
\newblock Constructive quantization: Approximation by empirical measures.
\newblock {\em Annales de l’Institut Henri Poincar{\'e} Probabilit{\'e}s et Statistiques}, 49(4):1183--1203, 2013.

\bibitem[Dud02]{Dudley_2002}
R.~M. Dudley.
\newblock {\em Real Analysis and Probability (2nd Edition)}.
\newblock Cambridge University Press, 2002.

\bibitem[DY95]{dobric1995asymptotics}
V.~Dobri{\'c} and J.~E. Yukich.
\newblock Asymptotics for transportation cost in high dimensions.
\newblock {\em Journal of Theoretical Probability}, 8(1):97--118, 1995.

\bibitem[FG15]{fournier2015rate}
N.~Fournier and A.~Guillin.
\newblock On the rate of convergence in wasserstein distance of the empirical measure.
\newblock {\em Probability Theory and Related Fields}, 162(3):707--738, 2015.

\bibitem[GCB{\etalchar{+}}19]{genevay2019sample}
A.~Genevay, L.~Chizat, F.~Bach, M.~Cuturi, and G.~Peyr{\'e}.
\newblock Sample complexity of {S}inkhorn divergences.
\newblock In {\em International Conference on Artificial Intelligence and Statistics}, pages 1574--1583, 2019.

\bibitem[GH24]{groppe2024lower}
M.~Groppe and S.~Hundrieser.
\newblock Lower complexity adaptation for empirical entropic optimal transport.
\newblock {\em Journal of Machine Learning Research}, 25(344):1--55, 2024.

\bibitem[GJB19]{grave2019unsupervised}
E.~Grave, A.~Joulin, and Q.~Berthet.
\newblock Unsupervised alignment of embeddings with {W}asserstein procrustes.
\newblock In {\em International Conference on Artificial Intelligence and Statistics}, pages 1880--1890, 2019.

\bibitem[HSM24]{hundrieser2024empirical}
S.~Hundrieser, T.~Staudt, and A.~Munk.
\newblock Empirical optimal transport between different measures adapts to lower complexity.
\newblock {\em Annales de l’Institut Henri Poincar{\'e} Probabilit{\'e}s et Statistiques}, 60(2):824--846, 2024.

\bibitem[KC22]{kuchibhotla2022moving}
A.~K. Kuchibhotla and A.~Chakrabortty.
\newblock Moving beyond sub-gaussianity in high-dimensional statistics: Applications in covariance estimation and linear regression.
\newblock {\em Information and Inference: A Journal of the IMA}, 11(4):1389--1456, 2022.

\bibitem[Lei20]{lei2020convergence}
J.~Lei.
\newblock Convergence and concentration of empirical measures under {W}asserstein distance in unbounded functional spaces.
\newblock {\em Bernoulli}, 26(1):767--798, 2020.

\bibitem[MBNWW24]{manole2024plugin}
T.~Manole, S.~Balakrishnan, J.~Niles-Weed, and L.~Wasserman.
\newblock Plugin estimation of smooth optimal transport maps.
\newblock {\em The Annals of Statistics}, 52(3):966--998, 2024.

\bibitem[McD89]{mcdiarmid1989method}
C.~McDiarmid.
\newblock On the method of bounded differences.
\newblock {\em Surveys in combinatorics}, 141(1):148--188, 1989.

\bibitem[M{\'e}m09]{memoli2009spectral}
F.~M{\'e}moli.
\newblock Spectral {G}romov-{W}asserstein distances for shape matching.
\newblock In {\em 2009 IEEE 12th International Conference on Computer Vision Workshops, ICCV Workshops}, pages 256--263. IEEE, 2009.

\bibitem[M{\'e}m11]{memoli11gromov}
F.~M{\'e}moli.
\newblock Gromov-{W}asserstein distances and the metric approach to object matching.
\newblock {\em Foundations of Computational Mathematics}, 11(4):417--487, 2011.

\bibitem[MNW19]{mena2019statistical}
G.~Mena and J.~Niles-Weed.
\newblock Statistical bounds for entropic optimal transport: sample complexity and the central limit theorem.
\newblock {\em Advances in Neural Information Processing Systems}, 32, 2019.

\bibitem[MNW24]{manole2024sharp}
T.~Manole and J.~Niles-Weed.
\newblock Sharp convergence rates for empirical optimal transport with smooth costs.
\newblock {\em The Annals of Applied Probability}, 34(1B):1108--1135, 2024.

\bibitem[NWR22]{niles2022estimation}
J.~Niles-Weed and P.~Rigollet.
\newblock Estimation of {W}asserstein distances in the spiked transport model.
\newblock {\em Bernoulli}, 28(4):2663--2688, 2022.

\bibitem[PC19]{peyre2019computational}
G.~Peyr{\'e} and M.~Cuturi.
\newblock Computational optimal transport: With applications to data science.
\newblock {\em Foundations and Trends{\textregistered} in Machine Learning}, 11(5-6):355--607, 2019.

\bibitem[PSW25]{pal2025wasserstein}
S.~Pal, B.~Sen, and T.-K.~L. Wong.
\newblock On the {W}asserstein alignment problem.
\newblock {\em arXiv preprint arXiv:2503.06838}, 2025.

\bibitem[RGK24a]{rioux2024entropic}
G.~Rioux, Z.~Goldfeld, and K.~Kato.
\newblock Entropic {G}romov-{W}asserstein distances: Stability and algorithms.
\newblock {\em Journal of Machine Learning Research}, 25(363):1--52, 2024.

\bibitem[RGK24b]{rioux2024limit}
G.~Rioux, Z.~Goldfeld, and K.~Kato.
\newblock Limit laws for {G}romov-{W}asserstein alignment with applications to testing graph isomorphisms.
\newblock {\em arXiv preprint arXiv:2410.18006}, 2024.

\bibitem[SH25]{staudt2025convergence}
T.~Staudt and S.~Hundrieser.
\newblock Convergence of empirical optimal transport in unbounded settings.
\newblock {\em Bernoulli}, 31(3):1929--1954, 2025.

\bibitem[Stu12]{sturm2012space}
K.-T. Sturm.
\newblock The space of spaces: curvature bounds and gradient flows on the space of metric measure spaces.
\newblock {\em arXiv preprint arXiv:1208.0434}, 2012.

\bibitem[Sza98]{szarek1998metric}
S.~Szarek.
\newblock Metric entropy of homogeneous spaces.
\newblock {\em Banach Center Publications}, 43(1):395--410, 1998.

\bibitem[Tal92]{talagrand1992matching}
M.~Talagrand.
\newblock Matching random samples in many dimensions.
\newblock {\em The Annals of Applied Probability}, pages 846--856, 1992.

\bibitem[Tal94]{talagrand1994transportation}
M.~Talagrand.
\newblock The transportation cost from the uniform measure to the empirical measure in dimension $\ge 3$.
\newblock {\em The Annals of Probability}, pages 919--959, 1994.

\bibitem[Tsy09]{Tsybakov2009}
A.~B. Tsybakov.
\newblock {\em {Introduction to Nonparametric Estimation}}.
\newblock Springer, 2009.

\bibitem[vBE65]{von1965inequalities}
B.~von Bahr and C.-G. Esseen.
\newblock Inequalities for the $r$th absolute moment of a sum of random variables, $1 \le r \le 2$.
\newblock {\em The Annals of Mathematical Statistics}, 36(1):299--303, 1965.

\bibitem[vdVW96]{van1996weak}
A.~W. van~der Vaart and J.~A. Wellner.
\newblock {\em Weak convergence and empirical processes}.
\newblock Springer, 1996.

\bibitem[vdVW09]{van2009note}
A.~van~der Vaart and J.~A. Wellner.
\newblock A note on bounds for vc dimensions.
\newblock {\em Institute of Mathematical Statistics Collections}, 5:103, 2009.

\bibitem[Vil08]{villani2008optimal}
C.~Villani.
\newblock {\em Optimal Transport: Old and New}.
\newblock Springer, 2008.

\bibitem[vLB04]{luxburg2004distance}
U.~von Luxburg and O.~Bousquet.
\newblock Distance-based classification with {L}ipschitz functions.
\newblock {\em Journal of Machine Learning Research}, 5(Jun):669--695, 2004.

\bibitem[WB19]{weed2019sharp}
J.~Weed and F.~Bach.
\newblock Sharp asymptotic and finite-sample rates of convergence of empirical measures in {Wasserstein} distance.
\newblock {\em Bernoulli}, 25(4A):2620--2648, 2019.

\bibitem[Wu20]{wu2020lecture}
Y.~Wu.
\newblock {Lecture Notes on: Information-theoretic Methods for High-Dimensional Statistics}.
\newblock {\em Yale University}, 2020.

\bibitem[XLC19]{xu2019scalable}
H.~Xu, D.~Luo, and L.~Carin.
\newblock Scalable {G}romov-{W}asserstein learning for graph partitioning and matching.
\newblock {\em Advances in Neural Information Processing Systems}, 32, 2019.

\bibitem[XLZD19]{xu2019gromov}
H.~Xu, D.~Luo, H.~Zha, and L.~C. Duke.
\newblock Gromov-{W}asserstein learning for graph matching and node embedding.
\newblock In {\em International Conference on Machine Learning}, pages 6932--6941, 2019.

\bibitem[XWLL15]{xing2015normalized}
C.~Xing, D.~Wang, C.~Liu, and Y.~Lin.
\newblock Normalized word embedding and orthogonal transform for bilingual word translation.
\newblock In {\em Proceedings of the 2015 Conference of the North American Chapter of the Association for Computational Linguistics: Human Language Technologies}, pages 1006--1011, 2015.

\bibitem[ZGMS24]{zhang2024gromov}
Z.~Zhang, Z.~Goldfeld, Y.~Mroueh, and B.~K. Sriperumbudur.
\newblock Gromov--{W}asserstein distances: Entropic regularization, duality and sample complexity.
\newblock {\em The Annals of Statistics}, 52(4):1616--1645, 2024.

\end{thebibliography}
\end{document}